\newtheorem{thm}{Theorem}[section]
\newtheorem{lem}[thm]{Lemma}
\newtheorem{prop}[thm]{Proposition}
\newtheorem{cor}[thm]{Corollary}
\theoremstyle{definition}
\newtheorem{defi}[thm]{Definition}
\newtheorem{rem}[thm]{Remark}
\newcommand{\de}{\, \mathrm{d}}
\newcommand{\del}{\partial}
\newcommand{\N}{\mathbb{N}}
\newcommand{\Q}{\mathbb{Q}}
\newcommand{\R}{\mathbb{R}}
\newcommand{\C}{\mathbb{C}}
\newcommand{\D}{\mathbb{D}}
\renewcommand{\S}{\mathbb S}
\newcommand{\CV}{\mathcal{V}}
\newcommand{\abs}[1]{\left\lvert #1 \right\rvert}
\newcommand{\set}[1]{\left\{ #1 \right\}}
\newcommand{\bo}\boldsymbol{}
\newcommand{\bigo}[2][]{O_{#1}\left( #2 \right)}
\newcommand{\smallo}[2][]{o_{#1}\left( #2 \right)}
\DeclareMathOperator{\Op}{Op}
\DeclareMathOperator{\sgn}{sgn}
\DeclareMathOperator{\per}{per}
\DeclareMathOperator{\area}{area}
\DeclareMathOperator{\mult}{mult}
\newcommand{\ceil}[1]{\left\lceil#1\right\rceil}
\newcommand{\pardiff}[2]{\frac{\partial #1}{\partial #2}}
\renewcommand{\hat}{\widehat}
\renewcommand{\tilde}{\widetilde}
\newcommand{\eps}{\varepsilon}
\renewcommand{\phi}{\varphi}
\renewcommand{\mod}[1]{\,({\rm mod}\,#1)}
\DeclareMathOperator{\DN}{DN}
\tikzstyle{grey}=[fill={rgb,255: red,191; green,191; blue,191}, -, draw=black]
\tikzstyle{white}=[-, fill=white]
\tikzstyle{hole}=[-, very thick, fill=white, draw=black, pattern=crosshatch dots]
\tikzstyle{dashes1}=[-, very thick, dashed, fill={rgb,255: red,128; green,128; blue,128}]
\tikzstyle{dashes2}=[-, dotted, fill={rgb,255: red,150; green,150; blue,150}]
\tikzstyle{transparent edge}=[-, draw=none, fill={rgb,255: red,191; green,191; blue,191}]
\tikzset{
  ring shading/.code args={from #1 at #2 to #3 at #4}{
    
    \def\radin{#2}
    
    \def\radout{#4}
    \pgfmathsetmacro{\proportion}{\radin/\radout}
    \pgfmathsetmacro{\outer}{.8818cm}
    \pgfmathsetmacro{\inner}{.8818cm*\proportion}
    \pgfmathsetmacro{\innerlow}{\inner-0.01pt}
    \pgfdeclareradialshading{ring}{\pgfpoint{0cm}{0cm}}%
    {
      color(0pt)=(white);
      color(\innerlow)=(white);
      color(\inner)=(#1);
      color(\outer)=(#3)
    }
    \pgfkeysalso{/tikz/shading=ring}
  },
}
\begin{document}

\title[Dirichlet-to-Neumann spectral invariants]{Spectral invariants of
Dirichlet-to-Neumann operators on surfaces}

\author[J. Lagac\'e]{Jean Lagac\'e}
\address{University College London, Department of Mathematics, 25 Gordon Street, WC1E 6BT, London, UK}
\email{j.lagace@ucl.ac.uk}

\author[S. St-Amant]{Simon St-Amant}
\address{D\'epartement de math\'ematiques et de statistique, Universit\'e de Montr\'eal, CP 6128 succ Centre-Ville, Montr\'eal, QC H3C 3J7, Canada \newline Present address: Department of Pure Mathematics and Mathematical Statistics, Wilberforce Road, Cambridge, CB3 0WB, UK}
\email{sas242@cam.ac.uk}

\begin{abstract}
We obtain a complete asymptotic expansion for the eigenvalues of the
Dirichlet-to-Neumann maps associated with Schr\"odinger operators on compact Riemannian
surfaces with boundary. For the zero potential, we recover the well-known
spectral asymptotics for the Steklov problem. For nonzero potentials, we obtain
new geometric invariants determined by the spectrum of what we call
 the
parametric Steklov problem. In particular, for constant
potentials parametric Steklov problem, the
total geodesic curvature on each connected component of the boundary is a
spectral invariant. Under the constant curvature assumption, this allows us to
obtain some interior information from the spectrum of these boundary operators.
\end{abstract}

\maketitle

\section{Introduction and main result}

\subsection{The Dirichlet-to-Neumann map}

Let $(\Omega,g)$ be a compact Riemannian surface with smooth boundary $\Sigma$ and $\tau
\in C^\infty(\Omega;\R)$. For $\lambda \in \R$, the
Dirichlet-to-Neumann map on $\Omega$
\begin{equation}
\DN_\lambda := \DN_\lambda(\Omega; \tau) : C^\infty(\Sigma) \rightarrow C^\infty(\Sigma)
\end{equation}
is defined as $\DN_\lambda u = \del_\nu \tilde u$, where $\del_\nu$ is the outward pointing normal derivative on $\Sigma$ and $\tilde u$ is the solution to the problem
\begin{equation}
\begin{cases}
(\Delta_g + \lambda\tau) \tilde u = 0 & \text{in } \Omega; \\
\tilde u = u & \text{on } \Sigma.
\end{cases}
\end{equation}

Given $\Omega$ and $\tau$, the map $\DN_\lambda$ is well-defined for all
$\lambda \in \CV \subset \R$, where $\R \setminus \CV$ is a
discrete set consisting in the
Dirichlet eigenvalues of the linear operator pencil $\Delta_g + \lambda \tau$, i.e. the values of $\lambda$ such that the problem
\begin{equation}
\begin{cases}
(\Delta_g + \lambda \tau) u = 0 & \text{in } \Omega; \\
u = 0 & \text{on } \Sigma;
\end{cases}
\end{equation}
admits a non-trivial solution. For fixed $\lambda
\in \CV$, the Dirichlet-to-Neumann map is a self-adjoint elliptic
pseudodifferential operator of order one with principal symbol $\abs{\xi}_g$ \cite{taylorpartial}. Its spectrum is discrete and
accumulating only at infinity,
\begin{equation}
\sigma_0(\Omega; \tau; \lambda) \leq \sigma_1(\Omega; \tau; \lambda)
  \leq \dots \nearrow \infty.
\end{equation}
These eigenvalues are solutions to the eigenvalue problem
\begin{equation}
  \label{prob:ev}
  \begin{cases}
    (\Delta_g + 
    \lambda\tau) u = 0 &\text{in } \Omega; \\
    \del_\nu u = \sigma u & \text{on } \Sigma.
  \end{cases}
\end{equation}

A survey of the general properties of the Steklov problem, i.e. the problem for
$\lambda = 0$, is found in \cite{girouardpolterovich}. When $\lambda = 0$, since
$\tau$ becomes irrelevant, we simply label the eigenvalues $\sigma_j(\Omega,0)$.

\subsection{Spectral asymptotics}

Since $\DN_\lambda$ is a self-adjoint elliptic pseudodifferential operator of
order one with principal symbol $\abs{\xi}_g$, it follows from Weyl's law with sharp remainder (see
\cite{hormanderiv}) that for any $\lambda, \tau$ the eigenvalues
satisfy
\begin{equation}
  \sigma_j(\Omega;\tau;\lambda) = \frac{\pi j}{\per(\Sigma)} + R(j,\lambda,\tau),
\end{equation}
where $\per(\Sigma)$ denotes the length of $\Sigma$ and the remainder
$R(j,\lambda,\tau)$ is a bounded function of $j$ for every fixed $\lambda$ and
$\tau$. Our aim is to obtain a complete asymptotic expansion for
$\sigma_j(\Omega;\tau;\lambda)$. In order to state our results, we generalise
the standard notion of asymptotic expansions.
\begin{defi}
  \label{def:asympequiv}
  Let $\set{a_j}, \set{b_j}$ be two sequences of real numbers. We
  say that they are asymptotically equivalent, and write $a_j \sim b_j$ if for
  all $N \in \N$ there exist $C_N$ and $J_N$ such that for all $j \ge J_N$,
  \begin{equation}
    \abs{a_j - b_j} \le C_N j^{-N}.
  \end{equation}
  More generally, suppose that for every $N \in \N$, $\set{B_j^{(N)}}$ is a
sequence of real numbers. We write
\begin{equation}
a_j \sim B_j^{(\infty)}
\end{equation}
if for every $N \in \N$, there exist $C_N$ and $J_N$ such that for all $j \ge
J_N$,
\begin{equation}
  \abs{a_j - B_j^{(N - 1)}} \le C_N j^{-N}.
\end{equation}
\end{defi}
We observe that the last part of the definition corresponds to the usual notion of
asymptotic expansion
\begin{equation}
  a_j \sim \sum_{n=K}^\infty b_j j^{-n}
\end{equation}
if we take $B_j^{(N)} = \sum_{n=K}^N b_n j^{-n}$ for every $N \in \N$, but our
definition allows for a larger class of asymptotic behaviour.

When
$\Omega$ is simply connected
Rozenblum \cite{rozenblumstek} and Guillemin--Melrose (see \cite{edwards})
proved 
independently that the eigenvalues of $\DN_0$ were asymptotically double and
satisfy the precise asymptotics
\begin{equation}
  \label{eq:edroz}
  \sigma_{2j}(\Omega) \sim \sigma_{2j-1}(\Omega) \sim \frac{2\pi
  j}{\per(\Sigma)} = \sigma_{2j}\left(\frac{\per(\Sigma)}{2\pi} \D\right).
\end{equation}
In other words, $\sigma_{2j}$ and $\sigma_{2j-1}$ have the same complete
asymptotic expansion, and every term except the principal term in that expansion
vanishes. That principal term is given by the Steklov eigenvalues for the disk
with the same perimeter as $\Omega$. Our first result is an extension of this result to the
Dirichlet-to-Neumann operators associated to Schrödinger operators.

\begin{thm}
  \label{thm:sc}
Let $(\Omega, g)$ be a simply connected compact Riemannian surface with smooth boundary
$\Sigma$. For $\lambda \in \R\cap\CV$, the eigenvalues of $\DN_\lambda(\Omega ;
\tau)$ are asymptotically double and admit a complete asymptotic
expansion given by
\begin{equation}
  \label{eq:aexpsc}
  \sigma_{2j} \sim \sigma_{2j-1}  \sim \frac{j}{L} +
  \sum_{n=1}^\infty s_n(\lambda;\Omega) j^{-n},
\end{equation}
where $L = \frac{\per(\Sigma)}{2\pi}$. The coefficients $s_n$ are polynomials in
$\lambda$ of degree at most $n$ with vanishing constant coefficients. They depend on both $\tau$ and the metric in an arbitrarily small
neighbourhood of
$\Sigma$ . If $\tau \equiv 1$, the first two terms are given by
\begin{equation}
  \label{eq:particularparam}
  s_{1}(\lambda;\Omega) = -\frac{\lambda L}{2}, \qquad s_{2}(\lambda;\Omega) =
  \frac{\lambda L}{4\pi} \int_\Sigma k_g \de s
\end{equation}
where $k_g$ is the geodesic curvature on $\Sigma$.
\end{thm}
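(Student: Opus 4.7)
The plan is to realise $\DN_\lambda(\Omega;\tau)$ as a classical first-order pseudodifferential operator on $\Sigma\cong S^1_L$, the circle of length $2\pi L$, through a microlocal factorisation of the Schr\"odinger operator near the boundary, then to compute the first terms of its full symbol in arc-length coordinates, and finally to extract the eigenvalue asymptotics from the structure of such operators on $S^1$.

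I would work in Fermi coordinates $(t,\theta)\in[0,\epsilon)\times\Sigma$ in which $g=dt^2+h(t,\theta)d\theta^2$, parametrising $\theta$ by arc length along $\Sigma$ so that $h(0,\theta)=1$ and $\partial_t h(0,\theta)=-2k_g(\theta)$. A standard parametrix construction yields $t$-dependent classical PsiDOs $A,B$ on $\Sigma$, with symbol expansions $a\sim\sum a_k$, $b\sim\sum b_k$ normalised by $a_1=-b_1=-|\xi|/\sqrt h$ and $a_k+b_k=0$ for $k\neq 0$, such that $\Delta_g+\lambda\tau-(\partial_t+A)(\partial_t+B)$ is smoothing and $\partial_t+B$ admits an interior parametrix; then $\DN_\lambda=B|_{t=0}$ modulo a smoothing operator. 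Matching orders in $a\#b+\partial_t b=p$, the order-$1$ equation in arc-length coordinates produces a cancellation that forces $b_0\equiv 0$ identically, the order-$0$ equation reads $-2b_1 b_{-1}=\lambda\tau$ whence $b_{-1}=-\lambda\sqrt{h}\,\tau/(2|\xi|)$, and the order-$(-1)$ equation yields
\begin{equation}
b_{-2}|_{t=0}=\frac{\lambda}{4\xi^2}\bigl(2k_g\tau|_\Sigma-\partial_t\tau|_\Sigma-i\,\sgn(\xi)\,\partial_\theta\tau|_\Sigma\bigr).
\end{equation}
The recursion propagates to all orders, and since $p_0=\lambda\tau$ is the sole source of $\lambda$, induction gives that $b_{-n}|_{t=0}$ is polynomial in $\lambda$ of degree at most $n$ with vanishing constant term.

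Next I would convert the symbol into eigenvalue asymptotics on $S^1_L$. The operator $\DN_\lambda$ is a self-adjoint elliptic classical PsiDO of order one with principal symbol $|\xi|$ and vanishing subprincipal, and in the Fourier basis $\psi_n=(2\pi L)^{-1/2}e^{in\theta/L}$ that diagonalises $|D|$, each off-diagonal matrix element of $\DN_\lambda$ is a Fourier coefficient of a smooth function of $\theta$, hence $O(|n-m|^{-\infty})$; in particular the coupling between $\psi_j$ and $\psi_{-j}$ is $O(j^{-\infty})$. An approximate spectral projector construction in the spirit of the quasi-mode method, iteratively refined so that the Schur complement to the other modes is smoothing at every step, then produces a rank-two approximate projector onto $\spn\{\psi_{\pm j}\}$ inside whose range $\DN_\lambda$ acts as a scalar up to $O(j^{-\infty})$, yielding the asymptotic doubling \eqref{eq:aexpsc} with $s_n$ given by the $\theta$-averages of $b_{-n}|_{t=0}$ at $\xi=j/L$ together with second-order perturbative corrections from couplings of the lower-order symbols.

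Specialising to $\tau\equiv 1$: $b_{-1}|_{t=0}=-\lambda/(2|\xi|)$ is $\theta$-independent, so its average at $\xi=j/L$ gives $s_1=-\lambda L/2$ directly and all second-order cross-mode corrections to $s_2$ vanish; therefore $s_2$ reduces to the $\theta$-average of $\lambda k_g/(2\xi^2)$ at $\xi=j/L$, which gives $s_2=\lambda L/(4\pi)\int_\Sigma k_g\,ds$. The main difficulty I anticipate is the asymptotic doubling itself: the estimate $\sigma_{2j}-\sigma_{2j-1}=O(j^{-\infty})$ requires showing that the cumulative perturbative corrections from couplings to all modes $\psi_m$ with $|m|\neq j$ split the two eigenvalues in each cluster only by $O(j^{-\infty})$, and this must be handled by iterating the approximate projector construction to kill the remaining Schur complement to infinite order in $j^{-1}$.
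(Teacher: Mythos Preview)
Your approach is sound and genuinely different from the paper's in two respects. First, you work directly in arc-length Fermi coordinates on $\Sigma$, so the principal symbol of $\DN_\lambda$ is already the constant $|\xi|$ and the geodesic curvature $k_g$ appears explicitly in $b_{-2}$; the paper instead conformally maps to the flat disk, computes the symbol there with a weight $\rho=e^f$, and only recovers $k_g$ at the end via Green's identity and Gauss--Bonnet applied to the conformal factor. Your route is geometrically more transparent and avoids the Riemann mapping step. Second, for the eigenvalue asymptotics you propose an iterated quasi-mode/Schur-complement scheme, whereas the paper conjugates by a Fourier integral operator (to remove the $x$-dependence of the principal symbol, a step that is vacuous in your coordinates) and then applies Rozenblum's recursive diagonalisation by zeroth-order $\Psi$DOs to make the full symbol $x$-independent to any order, after which the eigenvalue expansion and the doubling are read off from a black-box result. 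The paper's route has the advantage that the doubling follows cleanly from showing the diagonalised symbol is \emph{hermitian} (i.e.\ $p(x,-\xi)=\overline{p(x,\xi)}$), a property inherited at each step of the recursion; this is exactly the symmetry you would need to make your ``$\psi_j$ and $\psi_{-j}$ receive identical corrections at every perturbative order'' precise, and you should invoke it explicitly.

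Two points to tighten. Your iterated Schur-complement construction is essentially Rozenblum's diagonalisation rewritten in matrix language, and while it can be made rigorous, you should be aware that the corrections to $s_n$ beyond the average of $b_{-n}$ come from a definite recursion, not merely ``second-order'' terms; in particular, proving that $s_n$ (and not just $b_{-n}$) is polynomial in $\lambda$ of degree $\le n$ with vanishing constant term requires tracking $\lambda$-degrees through that recursion, as the paper does. Your computation of $s_1$ and $s_2$ for $\tau\equiv 1$ is correct, and the vanishing of the cross-mode correction at order $j^{-2}$ when $b_{-1}$ is $\theta$-independent is the right observation (the paper obtains the analogous simplification from a lemma showing that the first diagonalisation step already stabilises the order-$(-2)$ term).
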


Just as with the Steklov problem, the righthand side of \eqref{eq:aexpsc} is in
fact an asymptotic expansion for an eigenvalue problem defined on the disk. When $\Omega$ is not simply connected, the
situation is not so simple. Our main theorem shows that in such a case, the
spectrum is asymptotically equivalent to the spectrum of a Dirichlet-to-Neumann
operator associated to a Schr\"odinger operator on a disjoint union of disks. 
When $\lambda = 0$, Girouard, Parnovski, Polterovich and Sher
proved the same result in \cite{GPPS}, whereas Arias-Marco, Dryden, Gordon,
Hassannezhad, Ray and Stanhope proved in \cite{ADGHRS} the equivalent statement
for the eigenvalues of $\DN_0$ on orbisurfaces.

To state the theorem, we introduce notation for a union of non-decreasing sequences,
reordered to be non-decreasing. Let $\Xi = \set{\xi^{(1)},\dotsc, \xi^{(\ell)}}$ be a
finite set of nondecreasing sequences of real numbers accumulating at $\infty$. We denote by $S(\Xi)$ the sequence
$\xi^{(1)} \cup \dotso \cup \xi^{(\ell)}$ rearranged in monotone nondecreasing
order. Here, the union is understood as union of multisets, in other words repeated
elements are kept with their multiplicity.

\begin{thm}
  \label{thm:mc}
  Let $(\Omega,g)$ be a compact Riemannian surface whose smooth boundary
  $\Sigma$ has $\ell$ connected components $\Sigma_1, \dots, \Sigma_\ell$ with
  perimeters $\per(\Sigma_m) = 2 \pi L_m$, $1 \le m \le \ell$. Let $\lambda \in
\R \cap \CV$ and $\tau \in C^\infty(\Omega)$.
  \begin{enumerate}[label=(\Alph*)]
    \item For every $1 \le m \le \ell$, there is a metric $g_m$ on the unit disk,
      $\Upsilon_m$ a collar neighbourhood of $\Sigma_m$, $\tilde \Upsilon_m$
      a collar neighbourhood of $\S^1$ and $\tau_m \in C^\infty(\D)$ such that:
      \begin{enumerate}
        \item There is an isometry $\phi_m : (\tilde \Upsilon_m,g_m) \to
          (\Upsilon_m,g)$.
        \item The restriction of $\tau_m$ to $\tilde \Upsilon_m$ is the pullback
          by $\phi_m$ of $\tau$, in other words $\tau_m\big|_{\tilde \Upsilon_m} =
          \phi_m^* \tau\big|_{\Upsilon_m}$.
        \item      Putting 
      \begin{equation}
        \Omega_\sharp = \bigsqcup_{\ell=1}^m (\D,g_m)
      \end{equation}
    and $\tau_\sharp \in C^\infty(\Omega_\sharp)$ to be equal to $\tau_m$ on each
      components of $\Omega_\sharp$,
      \begin{equation}
        \label{eq:asympequiv}
        \sigma_j(\Omega;\tau;\lambda) \sim \sigma_j(\Omega_\sharp;\tilde
        \tau;\lambda).
      \end{equation}
  \end{enumerate}
\item Any other metric $h$ on $\Omega_\sharp$ and function $\tau' \in
  C^\infty(\Omega_\sharp)$ that satisfies (a) and (b) leads to the same
  asymptotic equivalence \eqref{eq:asympequiv}.
  
\item
  For every $1 \le m \le \ell$ and $N \geq 1$,
  define the sequence $\xi^{(m,N)}$ as $\xi_{0}^{(m,N)}
  = 0$ and for $j \ge 1$,
  \begin{equation}
    \label{eq:sequence}
    \xi_{2j}^{(m,N)} = \xi_{2j-1}^{(m,N)} := \frac{j}{L_m} +
    \sum_{n=1}^N s_n^{(m)}(\lambda;\Omega) j^{-n},
  \end{equation}
  where the coefficients $s_n^{(m)}$
  depend only on $\lambda$, $\tau$ and 
the metric in an arbitrarily small neighbourhood of $\Sigma_m$ in the same way as in
\eqref{eq:aexpsc} (including the case when $\tau \equiv 1$). Let $\Xi^{(N)} = \set{\xi^{(1,N)},\dotsc, \xi^{(\ell,N)}}$. For $\lambda \in \R \cap \CV$, the eigenvalues of $ \DN_\lambda(\Omega;\tau)$ are asymptotically given by
\begin{equation}
  \label{eq:mc}
  \sigma_j \sim S(\Xi^{(\infty)})_j.
\end{equation}
\end{enumerate}
\end{thm}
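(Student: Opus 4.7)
The plan is to adapt the parametrix-based strategy used by Girouard--Parnovski--Polterovich--Sher in the case $\lambda=0$ to the Schr\"odinger setting, deriving everything from the locality of the full symbol of $\DN_\lambda$ near $\Sigma$ and then invoking Theorem~\ref{thm:sc} on each disk. Parts (A), (B) and (C) are tightly coupled: (A) reduces the problem to a disjoint union of disks, (C) applies Theorem~\ref{thm:sc} to each disk, and (B) follows from the locality of the coefficients asserted in (C).

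For part (A), I would first fix, for each boundary component $\Sigma_m$, a collar neighbourhood $\Upsilon_m \simeq \Sigma_m\times[0,\varepsilon)$ in Fermi coordinates, and build $(\D,g_m)$ by gluing $\tilde\Upsilon_m = \S^1\times[0,\varepsilon)$ to the interior of the unit disk endowed with any smooth metric that matches $g$ on the collar, producing an isometry $\phi_m:\tilde\Upsilon_m\to\Upsilon_m$. Extend $\tau$ through the same gluing to a smooth function $\tau_m$ on $\D$, and assemble $\Omega_\sharp$ and $\tau_\sharp$ as in the statement. The asymptotic equivalence \eqref{eq:asympequiv} then rests on the claim that, under the identification of $C^\infty(\Sigma)$ with $C^\infty(\partial\Omega_\sharp)$ provided by $\bigsqcup_m\phi_m$, the difference $\DN_\lambda(\Omega;\tau) - \DN_\lambda(\Omega_\sharp;\tau_\sharp)$ is a smoothing operator.

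To prove this claim, I would construct a microlocal parametrix $P$ for the Dirichlet problem $(\Delta_g + \lambda\tau)\tilde u = 0$, $\tilde u|_{\Sigma_m} = u$, supported in $\Upsilon_m$. Either via a WKB/factorisation ansatz $\Delta_g + \lambda\tau = (\partial_\nu - iB_+)(\partial_\nu - iB_-) + R$ with $B_\pm$ first-order pseudodifferential on $\Sigma_m$ (in the spirit of Lee--Uhlmann or Taylor's boundary calculus), or via Boutet de Monvel's calculus, one checks inductively that the full symbol of $u\mapsto\partial_\nu(Pu)$ is determined entirely by the jets of $g$ and $\tau$ along $\Sigma_m$ and by $\lambda$. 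Any two completions of the collar data to a global Dirichlet problem therefore produce pseudodifferential operators on $\Sigma_m$ with identical full symbols, so their difference is smoothing. Combined with the classical fact that two self-adjoint order-one pseudodifferential operators whose eigenvalues obey the Weyl spacing $j/L_m$ and which differ by a smoothing operator have eigenvalues agreeing up to $O(j^{-N})$ for every $N$, this yields \eqref{eq:asympequiv}.

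For part (C), I would apply Theorem~\ref{thm:sc} to each $(\D,g_m)$ with potential $\tau_m$, obtaining the double asymptotic expansion \eqref{eq:sequence} on each factor; since the eigenvalues of $\DN_\lambda$ on a disjoint union are the multiset union of those on each component, reordered nondecreasingly, this yields \eqref{eq:mc} from part (A). Part (B) is then a consequence of (C): each $s_n^{(m)}$ depends, by Theorem~\ref{thm:sc} applied to $(\D,g_m)$, only on $\lambda$, $\tau$ and the metric in an arbitrarily small neighbourhood of $\Sigma_m$, so any other admissible $(h,\tau')$ produces identical sequences $\xi^{(m,N)}$ and hence the same asymptotic equivalence. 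The main obstacle is the parametrix step of (A): verifying that the Schr\"odinger perturbation $\lambda\tau$ does not spoil the locality of the full symbol of $\DN_\lambda$, and organising the symbolic expansion so that each homogeneous component is expressible purely from boundary jets of $g$ and $\tau$. Once that is established, the passage from smoothing differences of DN operators to $\sim$-equivalence of eigenvalues, and the rearrangement of $S(\Xi^{(\infty)})$, are essentially formal.
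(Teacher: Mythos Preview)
Your proposal is correct and follows essentially the same approach as the paper: the collar-gluing construction of $(\D,g_m,\tau_m)$, the Lee--Uhlmann factorisation to show the full symbol of $\DN_\lambda$ depends only on boundary jets of $g$ and $\tau$ (hence the difference of the two DN operators is smoothing), the passage from a smoothing difference to $O(j^{-\infty})$ eigenvalue asymptotics via the GPPS lemma, and the application of Theorem~\ref{thm:sc} on each disk. The only cosmetic difference is that the paper deduces (B) directly from the locality statement in (A) rather than routing through (C), but the content is the same.
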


\begin{figure}[!h]
\centering
\scalebox{0.72}{
\begin{tikzpicture}
		\node  (33) at (-1, 1.75) {};
		\node  (35) at (3.5, 3.75) {};
		\node  (34) at (-1, 5.5) {};
		\node  (32) at (-1, 5.5) {};
		\node  (0) at (-8.25, 6) {};
		\node  (1) at (-8.25, 1.75) {};
		\node  (2) at (-4.25, 5.75) {};
		\node  (3) at (-4.25, 1.75) {};
		\node  (20) at (-5.5, 4.425) {};
		\node  (21) at (-5.5, 3.075) {};
		\node  (22) at (-5.5, 4.25) {};
		\node  (23) at (-5.5, 3.25) {};
		\node  (24) at (-8.25, 6) {};
		\node  (25) at (-8.25, 1.75) {};
		\node  (26) at (-4.25, 5.75) {};
		\node  (27) at (-4.25, 1.75) {};
		\node  (28) at (-10, 3.5) {};
		\node  (29) at (-9.5, 2.4) {};
		\node  (30) at (-10, 3.325) {};
		\node  (31) at (-9.5, 2.575) {};
		\node  (10) at (-0.75, 2.75) {};
		\node  (11) at (1.75, 3) {};
		\node  (8) at (-0.5, 2.75) {};
		\node  (9) at (1.5, 3) {};
		\node  (36) at (-1, 1.75) {};
		\node  (37) at (3.5, 3.75) {};
		\node  (38) at (-1, 5.5) {};
		\node  (43) at (-9.75, 4.575) {};
		\node  (44) at (-7.25, 4.575) {};
		\node  (45) at (-9.05, 4.15) {};
		\node  (46) at (-7.95, 4.15) {};
		\node  (47) at (-0.25, 4.575) {};
		\node  (48) at (2.25, 4.575) {};
		\node  (49) at (0.45, 4.15) {};
		\node  (50) at (1.55, 4.15) {};
		\draw [style=transparent edge] (34.center)
			 to [bend right=15] (33.center)
			 to [in=-90, out=-15] (35.center)
			 to [in=15, out=90] cycle;
		\draw [style=transparent edge] (0.center)
			 to [in=-150, out=165, looseness=2.75] (1.center)
			 to [in=165, out=30, looseness=1.25] (3.center)
			 to [in=-75, out=75, looseness=1.25] (2.center)
			 to [in=-15, out=-165, looseness=1.25] cycle;
		\draw [style=dashes1] (11.center)
			 to [in=90, out=105] (10.center)
			 to [in=-75, out=-90, looseness=1.25] cycle;
		
		\draw [style=white] (9.center)
			 to [in=90, out=105] (8.center)
			 to [in=-75, out=-90, looseness=1.25] cycle;
		\draw [style=hole] (9.center)
			 to [in=90, out=105] (8.center)
			 to [in=-75, out=-90, looseness=1.25] cycle;
		\draw [style=dashes1] (21.center)
			 to [in=165, out=-150, looseness=2.00] (20.center)
			 to [in=30, out=-15, looseness=2.00] cycle;
		\draw [style=white] (23.center)
			 to [in=165, out=-150, looseness=2.25] (22.center)
			 to [in=30, out=-15, looseness=2.00] cycle;
		\draw [style=hole] (23.center)
			 to [in=165, out=-150, looseness=2.25] (22.center)
			 to [in=30, out=-15, looseness=2.00] cycle;
		\draw [style=transparent edge, in=135, out=45, loop] (1.center) to ();
		\draw [in=-150, out=165, looseness=2.75] (24.center) to (25.center);
		\draw [in=165, out=30, looseness=1.25] (25.center) to (27.center);
		\draw [in=-15, out=-165, looseness=1.25] (26.center) to (24.center);
		\draw [style=transparent edge, in=135, out=45, loop] (25.center) to ();
		\draw [style=dashes1] (29.center)
			 to [in=180, out=-165, looseness=2.25] (28.center)
			 to [in=15, out=0, looseness=4.25] cycle;
		\draw [style=white] (31.center)
			 to [in=180, out=-165, looseness=2.50] (30.center)
			 to [in=15, out=0, looseness=5.25] cycle;
		\draw [style=hole] (31.center)
			 to [in=180, out=-165, looseness=2.50] (30.center)
			 to [in=15, out=0, looseness=5.25] cycle;
		\draw [style=transparent edge, bend right=15] (38.center) to (36.center);
		\draw [in=-90, out=-15] (36.center) to (37.center);
		\draw [in=15, out=90] (37.center) to (38.center);
		\draw [dashed, in=-165, out=15] (26.center) to (38.center);
		\draw [dashed, in=165, out=-15] (27.center) to (36.center);
		\draw [bend right=15, looseness=0.75] (43.center) to (45.center);
		\draw [style=white] (46.center)
			 to [bend left=300] (45.center)
			 to [in=-165, out=-15] cycle;
		\draw [bend right=15, looseness=0.75] (46.center) to (44.center);
		\draw [bend right=15, looseness=0.75] (47.center) to (49.center);
		\draw [style=white] (50.center)
			 to [bend left=300] (49.center)
			 to [in=-165, out=-15] cycle;
		\draw [bend right=15, looseness=0.75] (50.center) to (48.center);

		\draw[very thick,fill={rgb,255: red,128; green,128; blue,128}] (-9.5,-1) circle (0.8);
		\draw[very thick,dashed,fill={rgb,255: red,200; green,200; blue,200}] (-9.5,-1) circle (0.64);
		
		\draw[very thick,fill={rgb,255: red,128; green,128; blue,128}] (-5.5,-1) circle (0.65);
		\draw[very thick,dashed,fill={rgb,255: red,200; green,200; blue,200}] (-5.5,-1) circle (0.52);
		
		\draw[very thick,fill={rgb,255: red,128; green,128; blue,128}] (0.5,-1) circle (1);
		\draw[very thick,dashed,fill={rgb,255: red,200; green,200; blue,200}] (0.5,-1) circle (0.8);
		
		\draw [thick,<->] (-9.5,1) to (-9.5,0.2);
		\draw [thick,<->] (-5.5,1) to (-5.5,0.2);
		\draw [thick,<->] (0.5,1) to (0.5,0.2);
		
		\node[scale=1] at (-2.55,3.6) {$\cdots$};
		\node[scale=1] at (-2.55,-1) {$\cdots$};
		\node[scale=1] at (-2.55,0.6) {$\cdots$};
		\node[scale=1,anchor = north west] at (-9.5,0.8) {$\varphi_1$};
		\node[scale=1,anchor = north west] at (-5.5,0.8) {$\varphi_2$};
		\node[scale=1,anchor = north west] at (0.5,0.8) {$\varphi_\ell$};
		\node[scale=1] at (-9.5,-1) {$\D_1$};
		\node[scale=1] at (-5.5,-1) {$\D_2$};
		\node[scale=1] at (0.5,-1) {$\D_\ell$};
		
		\draw (-10.1,2.6) -- (-10.3,2.3);
		\node[scale=1] at (-10.3,2.1) {$\Sigma_1$};
		\draw (-5.8,3.18) -- (-5.85,2.8);
		\node[scale=1] at (-5.8,2.6) {$\Sigma_2$};
		\draw (-0.28,3.14) -- (-0.55,3.5);
		\node[scale=1] at (-0.6,3.7) {$\Sigma_\ell$};
		
		\node[scale=1] at (-7.5,5) {$\Omega$};
		
\end{tikzpicture}}
\caption{Construction of collar neighbourhoods in statement (A) of Theorem \ref{thm:mc}.}
\end{figure}
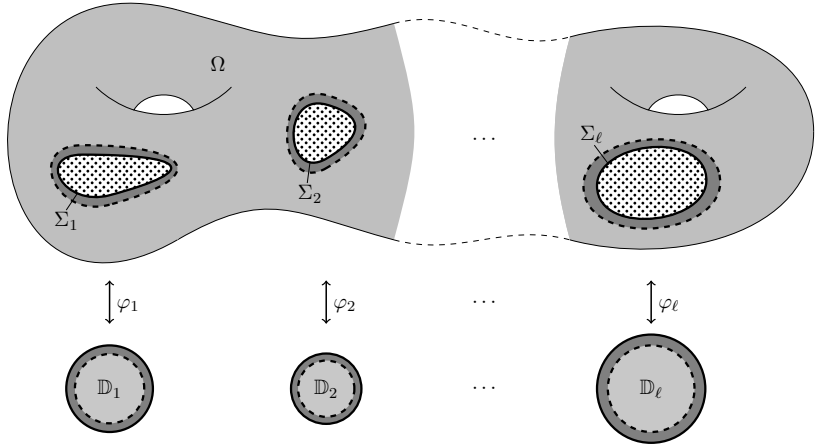

\begin{rem}
  In the previous theorem, statements (B) and (C) are direct consequence of
  statement (A) and Theorem \ref{thm:sc}, along with the observation that the
  spectrum of $\DN_\lambda$ on a disjoint union of surfaces is equal to the
  union of the spectra, repeating multiplicity. As such, we will only need to
  prove statement (A). Nevertheless, statements (B) and (C) are useful to have
  explicitly for our applications to inverse problems.
\end{rem}

Note that Theorem \ref{thm:mc} obviously implies Theorem \ref{thm:sc}. However, the statement
for simply connected surfaces is cleaner and obtained as an intermediate step in proving Theorem \ref{thm:mc}. Hence we state them separately.

\subsection{Inverse spectral geometry}

Inverse problems consist in recovering data of some PDE --- the domain of definition $\Omega$, the metric, the potential,
etc. --- from properties of the operator alone, and inverse spectral geometry
consists in recovering that data from the spectrum only. One of the seminal
questions in that field was asked for the Dirichlet Laplacian by Mark Kac in \cite{kac} and answered
negatively by Gordon, Webb and Wolpert in \cite{GWW}: ``Can one hear the shape
of a drum?'' For this reason, we often say that any geometric data that one can
recover from the spectrum of an operator can be ``heard''.

It is long known and follows from Weyl's law that the total boundary length can
be heard from $\DN_\lambda$. It also follows from the standard theory of the wave trace
asymptotics as developed by Duistermaat and Guillemin \cite{DG} that the
length spectrum --- that is the length of the closed geodesics --- of the
boundary $\Sigma$ can be heard as well. For $\DN_0$, it is shown in
\cite{GPPS} that we can recover the number of connected components, as well as
their lengths. It is also shown that from polynomial eigenvalue asymptotics alone in
dimension two nothing more can be recovered. This can be seen as a consequence of Theorems \ref{thm:sc} and \ref{thm:mc} since the coefficients $s_n$ and $s_n^{(m)}$ are all
polynomials in $\lambda$ that vanish when $\lambda = 0$.

For $\DN_0$, to extract more information different authors have turned to
spectral quantities that have a more global nature. In \cite{PS}, Polterovich
and Sher obtain an asymptotic expansion as $t \to 0$ for the heat trace of
$\DN_0$. From the coefficients, they obtain that the total mean curvature is a
spectral invariant for $d \ge 3$. See also the work of Liu \cite{liu} for
further improvements. In the
case of $\DN_\lambda(\Omega;\tau)$, heat trace asymptotics as well as invariants
deduced from them have also been obtained by Wang and Wang in \cite{WW}, again in
dimension $d \ge 3$. We also refer to the works of Jollivet and Sharafutdinov
\cite{JolSharaf,JolSharaf2018} where they find invariants for simply connected
domains from the zeta function associated with $\DN_0$.

Our main theorem shows that for non-zero potential, one can hear more
information from polynomial eigenvalue asymptotics.

The spectral inverse problem for the Dirichlet-to-Neumann map consists in
extracting information about $\Omega, g, \tau$ and $\lambda$ (or a subset of
these parameters) from the eigenvalues $\{\sigma_j : j \in \N\}$. As an
application of our methods, we will find spectral invariants when $\tau \equiv
1$, and show that we can recover $\lambda$ as well as geometric data on $\Omega$.
For $\lambda = 0$, the problem
has been studied already and is referred to as the Steklov problem. Lee and
Uhlmann have shown in \cite{leeuhlmann} that the map $\DN_0$ (but not
necessarily its spectrum) determines the Taylor series for $g$ close to the
boundary. Girouard, Parnovski, Polterovich and Sher show in \cite{GPPS} that
from polynomial order spectral asymptotics, one can determine the number of
boundary components and each of their lengths, but nothing more.  Our goal is to
obtain more information from the spectrum when $\lambda \neq 0$.

\begin{thm}
  \label{thm:invariance}
  For any $\lambda \in (\R\cap \CV) \setminus\set{0}$, the spectrum of $\DN_\lambda$
  determines the following quantities:
  \begin{itemize}
    \item the number of connected components of the boundary, and their
      respective perimeters;
    \item each coefficient $s_n^{(m)}$ in \eqref{eq:sequence}, and in particular if $\tau \equiv 1$:
    \begin{itemize}
    \item the parameter $\lambda$;
    \item the total geodesic curvature on each boundary component.
    \end{itemize}
  \end{itemize}
\end{thm}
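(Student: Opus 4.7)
The strategy is to extract each invariant from the asymptotic data provided by Theorem \ref{thm:mc}. That theorem gives $\sigma_j \sim S(\Xi^{(\infty)})_j$, where the $\ell$-tuple $\Xi^{(N)}$ encodes, to order $N$, the leading doubled arithmetic progression $k/L_m$ on each boundary component together with the correction coefficients $s_n^{(m)}$. My plan is to reverse-engineer, from the ordered union $S(\Xi^{(\infty)})$, first the multiset of perimeters $\{L_m\}$, then each individual $s_n^{(m)}$, and finally the explicit quantities $\lambda$ and $\int_{\Sigma_m} k_g\,\de s$ guaranteed by Theorem \ref{thm:sc} when $\tau \equiv 1$.

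To recover the $L_m$ (hence $\ell$), I would follow the argument of \cite{GPPS} for the Steklov case. Since for every $N$ the leading behaviour of $\xi^{(m,N)}$ is the doubled arithmetic progression $k/L_m$, the corrections $s_n^{(m)} k^{-n}$ being uniformly subleading, the counting function $N(t) = \#\{j: \sigma_j \le t\}$ has the same Weyl-type structure as in the pure Steklov case: linear growth $2(\sum_m L_m) t$ with sawtooth-type fluctuations of periods $1/L_m$. The multiset $\{L_1,\dotsc,L_\ell\}$ can therefore be read off exactly as in \cite{GPPS}, for instance via the expansion of the heat trace $\sum_j e^{-t\sigma_j}$ as $t \to 0^+$ combined with the linear independence of the functions $t \mapsto \coth(t/(2L))$ in the parameter $L$.

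Having identified the $L_m$, I would recover the coefficients $s_n^{(m)}$ by induction on $n$. Assuming $s_1^{(m)},\dotsc,s_{N-1}^{(m)}$ are known for all $m$, Theorem \ref{thm:mc}(C) implies that the sequence $\sigma_j - S(\Xi^{(N-1)})_j$ is asymptotically equivalent, in the sense of Definition \ref{def:asympequiv}, to the ordered union of $\ell$ doubled sequences whose leading behaviour is $s_N^{(m)}(k/L_m)^{-N}$. A second application of the \cite{GPPS}-type separation --- now with the already-known $L_m$ playing the role of known frequencies --- recovers each $s_N^{(m)}$ individually, by linear independence of the scalar sequences $k \mapsto L_m^N k^{-N}$ in $m$. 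This closes the induction.

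Finally, suppose $\tau \equiv 1$. The identities \eqref{eq:particularparam}, which Theorem \ref{thm:sc} guarantees hold for each boundary component, read
\begin{equation}
s_1^{(m)}(\lambda;\Omega) = -\frac{\lambda L_m}{2}, \qquad s_2^{(m)}(\lambda;\Omega) = \frac{\lambda L_m}{4\pi}\int_{\Sigma_m} k_g\,\de s.
\end{equation}
Since the $L_m$ are already known and $\lambda \ne 0$, any single $s_1^{(m)}$ determines $\lambda$ (and the common value across different $m$ is automatic, since the spectrum sees only one $\lambda$), after which the second identity yields $\int_{\Sigma_m} k_g\,\de s$ on each component separately. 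The principal technical obstacle is the inductive exponential separation step, especially when the $L_m$ are rationally commensurable and the cleanest Fourier-type disentanglement fails; but this is precisely the situation already handled in \cite{GPPS}, so the adaptation amounts to tracking how the subleading corrections feed into the same harmonic analysis.
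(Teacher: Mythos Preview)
Your outline recovers the multiset of boundary lengths correctly, and the final extraction of $\lambda$ and $\int_{\Sigma_m} k_g\,\de s$ from $s_1^{(m)}$ and $s_2^{(m)}$ is the same as in the paper. The genuine gap is in the inductive step where you claim to recover each $s_N^{(m)}$ individually.

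You write that after subtracting $S(\Xi^{(N-1)})$, the residual is ``the ordered union of $\ell$ doubled sequences whose leading behaviour is $s_N^{(m)}(k/L_m)^{-N}$'' and that one separates them ``by linear independence of the scalar sequences $k \mapsto L_m^N k^{-N}$ in $m$''. This last claim is false as stated: the sequences $k \mapsto L_m^N k^{-N}$ are all scalar multiples of the single sequence $k^{-N}$, so there is no linear independence to exploit. More fundamentally, once the sequences are merged and reordered, you do not know \emph{which} index $j$ of the spectrum corresponds to which pair $(m,k)$; that identification is exactly what has to be established. Your closing paragraph attributes the commensurable case to \cite{GPPS}, but in that paper every $s_n^{(m)}$ vanishes, so the problem of attributing a subleading coefficient to a specific boundary component never arises there.

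The paper handles this with an explicitly Diophantine argument (Proposition \ref{prop:decoupling}). Writing $\alpha_m = 1/L_m$, one uses Dirichlet's simultaneous approximation theorem to produce, for each minimal $\alpha_m$, an infinite set $E_m \subset \N$ of indices $j$ such that the interval $[j\alpha_m - \delta, j\alpha_m + \delta]$ contains \emph{only} multiples of $\alpha_m$ in $R(M)$, with fixed $\delta > 0$. Along $j \in E_m$ the spectrum is therefore known to come solely from components with $\alpha_k = \alpha_m$, and the limit points of $\{(x - j\alpha_m) j^{n}\}$ over those $x$ recover the $s_n^{(k)}$ for all such $k$ (with multiplicities). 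One then climbs the partial order $\alpha_k \prec \alpha_m$ (meaning $\alpha_m \in \N\alpha_k$) to handle the non-minimal elements. The heat-trace or harmonic-analysis heuristic you sketch does not supply this isolation mechanism; some argument of this Diophantine type is the missing idea.
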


The previous theorem along with the Gauss-Bonnet theorem also yield the
following theorem. We denote by $K$ the Gaussian curvature on $\Omega$.

\begin{thm}
  \label{cor:genusetc}
Let $\Omega$ be a compact orientable Riemannian surface with smooth boundary
$\Sigma$ and genus $\gamma$, and let $\lambda \in (\R \cap \CV) \setminus \set
0$. Then, the quantity
\begin{equation}
4 \pi \gamma +   \int_\Omega K_g \de A_g
\end{equation}
is a spectral invariant of $\DN_\lambda(\Omega; 1)$. In particular, if the
Gaussian curvature is assumed to be a constant $K(\Omega)$, the quantity
\begin{equation}\label{eq:invariant}
4\pi \gamma + K(\Omega)\area(\Omega)
\end{equation}
is a determined by the spectrum of $\DN_\lambda(\Omega; 1)$.
\end{thm}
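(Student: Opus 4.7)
The plan is to deduce this theorem directly from Theorem \ref{thm:invariance} by invoking the Gauss--Bonnet theorem for compact orientable surfaces with boundary. Since $\Omega$ is orientable with genus $\gamma$ and $\ell$ boundary components, its Euler characteristic is $\chi(\Omega) = 2 - 2\gamma - \ell$, so Gauss--Bonnet gives
\begin{equation}
\int_\Omega K_g \de A_g + \sum_{m=1}^\ell \int_{\Sigma_m} k_g \de s = 2\pi(2 - 2\gamma - \ell).
\end{equation}
Rearranging yields
\begin{equation}
4\pi\gamma + \int_\Omega K_g \de A_g = 4\pi - 2\pi\ell - \sum_{m=1}^\ell \int_{\Sigma_m} k_g \de s.
\end{equation}

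The strategy is then to verify that every term on the right-hand side is determined by the spectrum of $\DN_\lambda(\Omega;1)$. By Theorem \ref{thm:invariance}, for any $\lambda \in (\R \cap \CV)\setminus\set{0}$, the number $\ell$ of connected components of $\Sigma$ is a spectral invariant, and when $\tau \equiv 1$ the total geodesic curvature $\int_{\Sigma_m} k_g \de s$ on each individual boundary component is also recovered from the spectrum. Summing these $\ell$ spectral invariants gives $\sum_{m=1}^\ell \int_{\Sigma_m} k_g \de s$ as a spectral invariant, and the constant $4\pi - 2\pi\ell$ is determined as well. Hence the left-hand side, $4\pi\gamma + \int_\Omega K_g \de A_g$, is a spectral invariant of $\DN_\lambda(\Omega;1)$, proving the first assertion.

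For the second assertion, under the assumption that the Gaussian curvature is a constant $K(\Omega)$, one has $\int_\Omega K_g \de A_g = K(\Omega) \area(\Omega)$, so \eqref{eq:invariant} is immediately an invariant. There is no substantive obstacle here: the entire content lies in Theorem \ref{thm:invariance}, and the proof is a short combinatorial manipulation of Gauss--Bonnet. The only mild subtlety worth flagging is that $\gamma$ and $\int_\Omega K_g \de A_g$ cannot be separated individually by our method, since the spectrum of $\DN_\lambda$ only sees data in an arbitrarily small neighbourhood of $\Sigma$ through the coefficients $s_n^{(m)}$, so interior topological and metric information can only be coupled via the Gauss--Bonnet identity.
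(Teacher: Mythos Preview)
Your proof is correct and follows essentially the same approach as the paper: both invoke Theorem \ref{thm:invariance} to obtain $\ell$ and the total geodesic curvatures $\int_{\Sigma_m} k_g \de s$ as spectral invariants, then apply Gauss--Bonnet with $\chi(\Omega) = 2 - 2\gamma - \ell$ to conclude. Your closing remark about the impossibility of decoupling $\gamma$ from the total curvature is also noted in the paper immediately after the proof.
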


Here the genus $\gamma$ of $\Omega$ corresponds to the minimal genus of a closed
surface in which $\Omega$ can be topologically embedded. Equivalently, it is the
genus of the closed surface obtained from $\Omega$ by gluing topological disk
onto each boundary component. By restricting the choice of $\Omega$, we can gain
more interior geometric information from the spectrum. Note that while the
Steklov spectrum is not known to determine interior information in general, for
planar domains it is already known from the work of Edward \cite[Theorem 4]{edward2} that
we can get lower bounds for the area.

\begin{cor}
  \label{cor:sphere}
If $\Omega$ is a domain of the standard sphere $\S^2$,
its area is a spectral invariant of $\DN_\lambda(\Omega;1)$ for all $\lambda \in
(\R \cap \CV) \setminus \set 0$.
\end{cor}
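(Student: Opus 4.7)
The plan is to deduce the corollary directly from Theorem~\ref{cor:genusetc}, using the fact that domains of the sphere have genus zero. Since $\Omega$ sits inside the standard sphere $(\S^2, g_0)$, the induced metric has constant Gaussian curvature $K \equiv 1$. The hypothesis of Theorem~\ref{cor:genusetc} is therefore satisfied with $K(\Omega) = 1$, so for any $\lambda \in (\R \cap \CV) \setminus \set{0}$ the quantity
\begin{equation}
4\pi \gamma + \area(\Omega)
\end{equation}
is determined by the spectrum of $\DN_\lambda(\Omega; 1)$.

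Next I would eliminate the genus contribution. Recall that $\gamma$ is defined as the minimal genus of a closed orientable surface in which $\Omega$ embeds topologically, equivalently the genus obtained by capping off each boundary component with a disk. Since $\Omega \subset \S^2$, capping each boundary circle of $\Omega$ with a topological disk yields a closed surface that still embeds in $\S^2$, hence is the sphere itself; thus $\gamma = 0$. Substituting this into the invariant displayed above collapses it to $\area(\Omega)$, which establishes the corollary.

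There is no real obstacle here: the corollary is an immediate consequence of Theorem~\ref{cor:genusetc} once one notes the topological triviality $\gamma(\Omega) = 0$ for subdomains of the sphere. The only subtlety worth a remark is that the construction obtained by capping off boundary components actually produces a closed surface embedded in $\S^2$, so its genus is zero; this uses that each boundary component $\Sigma_m$ is a smoothly embedded circle in $\S^2$ and therefore, by the Jordan curve theorem on $\S^2$, bounds a topological disk, ensuring the capping operation is consistent with the ambient embedding.
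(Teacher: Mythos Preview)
Your proof is correct and follows exactly the paper's approach: apply Theorem~\ref{cor:genusetc} with $K(\Omega)=1$ and $\gamma=0$ to reduce the invariant \eqref{eq:invariant} to $\area(\Omega)$. The additional justification you give for $\gamma=0$ via capping boundary circles is a harmless elaboration of what the paper states in a single clause.
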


\begin{proof}
If $\Omega \subset \S^2$, then $\gamma = 0$ and $K(\Omega) = 1$. This leaves only $\area(\Omega)$ in \eqref{eq:invariant}.
\end{proof}

\begin{cor}\label{cor:flat}
If $\Omega$ is a domain in a flat orientable space form, its genus is a spectral
invariant of $\DN_\lambda(\Omega;1)$ for all $\lambda \in (\R \cap \CV)
\setminus \set 0$.
\end{cor}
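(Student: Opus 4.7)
The plan is to apply Theorem \ref{cor:genusetc} directly, exploiting the fact that a flat space form has identically vanishing Gaussian curvature. Concretely, if $\Omega$ is a domain inside a flat orientable space form $(M,g)$, then $K_g \equiv 0$ on $M$ and therefore on $\Omega$. In particular, $K_g$ is a constant on $\Omega$, so the constant curvature hypothesis of Theorem \ref{cor:genusetc} is satisfied with $K(\Omega) = 0$. Substituting this value into \eqref{eq:invariant}, the spectrally determined quantity collapses to
\begin{equation}
4\pi\gamma + K(\Omega)\area(\Omega) = 4\pi\gamma.
\end{equation}
Since $4\pi \neq 0$, the integer $\gamma$ is recovered by dividing, and hence is a spectral invariant of $\DN_\lambda(\Omega;1)$ for every $\lambda \in (\R \cap \CV) \setminus \set{0}$.

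In terms of concrete steps, I would first note that the orientability assumption guarantees that the notion of genus used in Theorem \ref{cor:genusetc} --- the genus of the closed orientable surface obtained by capping off each boundary component of $\Omega$ with a topological disk --- is well defined. Second, I would observe that flatness of the ambient space form gives $K \equiv 0$ on $\Omega$, which is in particular a constant, so the hypothesis of the second part of Theorem \ref{cor:genusetc} holds. Third, I would invoke that theorem to read off $4\pi\gamma$ from the spectrum and conclude.

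There is essentially no obstacle: this corollary is simply the specialisation of Theorem \ref{cor:genusetc} to the case $K(\Omega)=0$, in which the area term vanishes and only the topological information survives. The substance of the argument is already contained in Theorem \ref{cor:genusetc}, which in turn rests on Theorem \ref{thm:invariance} (giving spectral access to the total geodesic curvature on each boundary component) combined with the Gauss--Bonnet formula. The only point worth emphasising is that flatness, rather than merely constant curvature, is what makes the area drop out and leaves a clean recovery of the genus.
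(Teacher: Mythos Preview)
Your proposal is correct and follows exactly the same approach as the paper: invoke Theorem~\ref{cor:genusetc}, observe that flatness gives $K(\Omega)=0$, and read off $4\pi\gamma$ from \eqref{eq:invariant}. The paper's own proof is a single sentence to this effect.
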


\begin{proof}
If $\Omega$ is a domain in a flat orientable space form, then $K(\Omega) = 0$ and only $4\pi\gamma$ remains in \eqref{eq:invariant}.
\end{proof}

The inverse problem for $\DN_\lambda(\Omega;\tau)$ has a concrete interpretation
in terms of the inverse scattering problem. In this context, $\Omega \subset
\R^2$ has anisotropic refraction index $\tau$. Non-destructive testing is the
process of using the far-field data to measure the scattering of an incoming
wave at frequency $\sqrt{\lambda}$ by the obstacle $\Omega$. The inverse
scattering problem consists in recovering then the refraction index $\tau$, as well as the geometry
of $\Omega$. In \cite{CCMM}, it is shown that the far-field data determines the
spectrum of $\DN_\lambda(\Omega;\tau)$, so that any spectral invariant of
$\DN_\lambda$ can be obtained from the far-field data. We have explicit
expressions for geometric quantities related to the boundary of $\Omega$ when
the refraction index is isotropic, i.e. constant. When it is not, we do not give
an explicit value of the coefficients $s_n$, however the algorithmic procedure
to compute them in Sections \ref{sec:trans} and \ref{sec:invariants} applies.
Similarly, Theorem \ref{thm:rhotau} is also valid in that context, giving an
exact expression for the first few invariant quantities. Note that the
coefficients $s_n$ are polynomials of order at most $n$ in $\lambda$ with
vanishing constant coefficient. This means that it is possible to decouple the
coefficients of this polynomial by knowing the asymptotics for
$\lambda_1,\dotsc, \lambda_n$. Physically, this simply means measuring the
scattered far-field data for incoming waves at $n$ different frequencies.

\subsection{Sketch of the proof of Theorem \ref{thm:sc}}

Let us introduce a slightly more general version of Problem \eqref{prob:ev}. For
$\rho : \Sigma \to \R_+$ a strictly positive smooth function, we
consider the eigenvalue problem
\begin{equation}
  \label{prob:gen}
  \begin{cases}
    (\Delta_g + 
    \lambda\tau) u = 0 &\text{in } \Omega; \\
    \del_\nu u = \sigma \rho u & \text{on } \Sigma.
  \end{cases}
\end{equation}
Our first step will be to show that we can reduce Theorems \ref{thm:sc} and
\ref{thm:mc} for Problem \eqref{prob:gen} to proving them for
\begin{equation}\label{prob:disk}
\begin{cases}
-\Delta u = \lambda \tau u & \text{in } \D; \\
\del_\nu u = \sigma \rho u & \text{on } \S^1.
\end{cases}
\end{equation}
In other words, by introducing this extra parameter $\rho$ they only need to be
proved in the case where $\Omega$ is a disk, and $g$ is the flat metric $g_0$ . 

This reduction will be done by following the strategy set out in \cite{GPPS}, where they
glue a disk to a collar neighbourhood of every boundary component, and
discard the rest of the surface. Since the symbol of $\DN_\lambda$ depends
solely on data obtained from a neighbourhood of the boundary, this doesn't change
the symbol of the Dirichlet-to-Neumann map. Mapping these topological disks
conformally to the unit disk in $\R^2$ will multiply the factors $\tau$ and
$\rho$ by a conformal factor, in other words it doesn't change the structure of
the problem.

We then follow the general theory set out by Rozenblum in
\cite{rozenblumalmostsim} to obtain a complete asymptotic expansion of the
eigenvalues of a pseudodifferential operator on a circle in terms of integrals
of its symbol. Note that in \cite{rozenblumalmostsim}, an abstract algorithm is
given to do so, but as is often the case with pseudodifferential symbolic
calculus the expressions become unwieldy very quickly, and the difficulty
resides in extracting actual geometric information out of it. The symbol is easy to compute for $\rho = 1$, $\lambda = 0$,
where it is simply $\abs{\xi}$, with no lower order terms. However, when
$\lambda \neq 0$, this is no longer the case, and it will lead to the
full asymptotic expansion that we obtain.

We obtain the following theorem for the disk.
\begin{thm}\label{thm:disk}
The eigenvalues of Problem \eqref{prob:disk} satisfy the asymptotic expansion
\begin{equation}\label{eq:aexp}
\sigma_{2j} = \sigma_{2j-1} + \bigo{j^{-\infty}} \sim \frac{j}{\int_{\S^1} \rho
\de x} + \sum_{n=1}^{\infty} b_n j^{-n}.
\end{equation}
where the coefficients $b_n$ depend only on $\rho$, $\lambda$ and the values of $\tau$ in a neighbourhood of $\S^1$, as well as their derivatives.
\end{thm}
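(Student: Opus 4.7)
The plan is to reformulate Problem \eqref{prob:disk} as a spectral problem for a single self-adjoint first order pseudodifferential operator on $\S^1$, compute its full symbol via a boundary parametrix construction, and then invoke the quasi-diagonalisation theorem of \cite{rozenblumalmostsim} to extract the eigenvalue asymptotics. As a first move, I would rewrite the problem as the eigenvalue equation for
\begin{equation}
P := \rho^{-1/2}\, \DN_\lambda(\D;\tau)\, \rho^{-1/2} \colon L^2(\S^1) \to L^2(\S^1),
\end{equation}
which is self-adjoint, elliptic, and pseudodifferential of order one with principal symbol $\rho(\theta)^{-1}\abs{\xi}$.

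The bulk of the work lies in computing the full symbol of $\DN_\lambda(\D;\tau)$ by the standard boundary factorisation method. In polar coordinates $(r,\theta)$ and with the normal variable $t := 1-r$, the Schr\"odinger equation $(-\Delta - \lambda\tau)\tilde u = 0$ becomes
\begin{equation}
\partial_t^2 \tilde u - (1-t)^{-1}\partial_t \tilde u + (1-t)^{-2}\partial_\theta^2 \tilde u + \lambda\tau(t,\theta)\tilde u = 0.
\end{equation}
I would seek a $t$-dependent pseudodifferential operator $C(t,\theta,D_\theta)$, with full symbol $c \sim c_1 + c_0 + c_{-1} + \dotsb$, such that interior-decaying solutions satisfy $\partial_t \tilde u = C\tilde u$ modulo smoothing. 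Factoring
\begin{equation}
\partial_t^2 - (1-t)^{-1}\partial_t + (1-t)^{-2}\partial_\theta^2 + \lambda\tau \;\equiv\; (\partial_t - B)(\partial_t - C)
\end{equation}
to all symbolic orders in the Kohn--Nirenberg calculus yields $B + C = (1-t)^{-1}$ and $B \# C - \partial_t C = (1-t)^{-2}\partial_\theta^2 + \lambda\tau$. Choosing the decaying branch fixes $c_1 = -(1-t)^{-1}\abs{\xi}$, and every subsequent $c_{-k}$ is then determined algebraically from $c_1,\dots, c_{1-k}$ together with the Taylor coefficients of $\tau$ in $t$ at $t=0$. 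Evaluating $-C$ at $t=0$ gives the full symbol of $\DN_\lambda$, and conjugating with $\rho^{-1/2}$ produces the full symbol of $P$, with lower order terms that are polynomials in $\lambda$, $\rho$ and derivatives of $\tau$ at the boundary.

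At this point I would invoke the quasi-diagonalisation procedure of \cite{rozenblumalmostsim}, which asserts that any self-adjoint elliptic first order pseudodifferential operator on $\S^1$ is unitarily equivalent, modulo smoothing, to a Fourier multiplier $F(D_\theta)$ whose classical polyhomogeneous symbol $F(\xi) \sim \alpha_1 \abs{\xi} + \alpha_0 + \alpha_{-1}\abs{\xi}^{-1} + \dotsb$ is explicitly computable from the full symbol via iterated integrals over $\S^1$. Performing the preliminary change of variable $\theta \mapsto s(\theta) := \bar\rho^{-1}\int_0^\theta \rho(\vartheta)\de\vartheta$ with $\bar\rho := (2\pi)^{-1}\int_{\S^1}\rho \de \vartheta$ normalises the principal symbol to $\bar\rho^{-1}\abs{\xi}$, and the eigenvalues of $F(D_\theta)$ are then given exactly by the pair $F(n), F(-n)$ for each $n \ge 1$, yielding the asymptotic expansion \eqref{eq:aexp}. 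The $O(j^{-\infty})$ splitting between $\sigma_{2j}$ and $\sigma_{2j-1}$ reflects the off-diagonal matrix element $\langle e^{in\theta}, R\, e^{-in\theta}\rangle$, where $R$ is the smoothing remainder produced by the diagonalisation; this decays faster than any inverse polynomial in $n$ by smoothness of the Schwartz kernel of $R$.

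I expect the main technical obstacle to lie in controlling the recursion for the symbols $c_{-k}$: the explicit expressions quickly become unwieldy, a difficulty acknowledged in \cite{rozenblumalmostsim}, and the work in \cite{rozenblumstek} essentially amounts to showing that for $\lambda = 0$ all the $c_{-k}|_{t=0}$ vanish. For $\lambda \neq 0$ the essential new content is to identify the polynomial dependence on $\lambda$ at each order and to isolate the algebraic combinations of $\lambda$ and boundary Taylor coefficients of $\tau$ that survive after the diagonalisation. A secondary technical point is verifying that Rozenblum's diagonalising multiplier is genuinely polyhomogeneous in $\abs{\xi}^{-1}$, so that the final expansion is indeed of the form \eqref{eq:aexp} rather than a more general asymptotic series in the sense of Definition \ref{def:asympequiv}; this follows inductively from the structure of the recursion together with the fact that the Kohn--Nirenberg composition preserves polyhomogeneity.
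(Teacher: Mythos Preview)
Your outline follows the paper's own route closely: the same boundary factorisation to compute the full symbol, the same Rozenblum diagonalisation, and the same change of boundary variable to flatten the principal symbol. The symmetrisation $P=\rho^{-1/2}\DN_\lambda\rho^{-1/2}$ is a cosmetic variant of the paper's $\rho^{-1}\DN_\lambda$ (the paper handles self-adjointness by conjugating with a diffeomorphism of $\S^1$ instead), and the symbol computation you sketch is essentially identical to what the paper carries out in Section~\ref{sec:symboldisk}.

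There is, however, a genuine gap in your argument for the asymptotic doubling $\sigma_{2j}=\sigma_{2j-1}+O(j^{-\infty})$. You write the diagonalised multiplier as $F(\xi)\sim\alpha_1|\xi|+\alpha_0+\alpha_{-1}|\xi|^{-1}+\dotsb$, i.e.\ as a function of $|\xi|$ alone, and then attribute the residual splitting to the smoothing remainder $R$. But Rozenblum's procedure only produces a symbol whose homogeneous components $p_{1-m}(\xi)$ are positively homogeneous of degree $1-m$; on the line this allows $p_{1-m}(\xi)=c_m^+\xi^{1-m}$ for $\xi>0$ and $c_m^-|\xi|^{1-m}$ for $\xi<0$ with $c_m^+\neq c_m^-$ in general. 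The two eigenvalue branches are then $\sum p_{1-m}(\pm j)$, and if any $c_m^+\neq c_m^-$ the discrepancy is polynomial in $j^{-1}$, not $O(j^{-\infty})$. Your remark that ``Kohn--Nirenberg composition preserves polyhomogeneity'' does not address this: polyhomogeneity in $\xi$ is preserved, but evenness in $\xi$ is not automatic. Likewise, the off-diagonal smoothing matrix element $\langle e^{in\theta},Re^{-in\theta}\rangle$ is irrelevant here; it controls a different perturbation.

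What is actually needed, and what the paper supplies, is the observation that the full symbol of $\DN_\lambda$ is \emph{hermitian} in the sense $a(x,-\xi)=\overline{a(x,\xi)}$, and that this property is preserved by both the FIO conjugation and each step of the iterative diagonalisation. Once the diagonalised symbol $p_{1-m}(\xi)$ is hermitian and real (the latter forced by self-adjointness of $\Lambda$), it must be even in $\xi$, giving $p_{1-m}(j)=p_{1-m}(-j)$ and hence the doubling. You should replace your final paragraph with this argument.
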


We will then specialise the previous theorem to the values of $\tau$ and
$\rho$ coming from the conformal mapping between the disk and $\Omega$. We
obtain explicit values of the coefficients $b_n$ in that situation.

\subsection{Plan of the paper} 
In Section \ref{sec:LU}, we make clear our reduction to the disk and compute the full symbol of the Dirichlet-to-Neumann map. In Section \ref{sec:trans} and Section \ref{sec:asymp}, using the method laid out in \cite{rozenblumalmostsim}, we transform the symbol of a general Dirichlet-to-Neumann map on a circle to extract the asymptotic expansion of its eigenvalues. In Section \ref{sec:Steklovasymp}, we specify our results to the case of the parametric Steklov problem in order to show Theorem \ref{thm:sc}. Finally, in Section \ref{sec:invariants}, we prove Theorem \ref{thm:invariance}. There, we use Diophantine approximation to decouple the sequences obtained in Theorem \ref{thm:mc} recursively.

\subsection{Asymptotic notation}

In addition to the asymptotic equivalence introduced in Definition
\ref{def:asympequiv}, we also make use of Landau's asymptotic notation.

\begin{itemize}
  \item For two sequences $\set{a_j}, \set{b_j}$, we write $a_j = \bigo{b_j}$ if there
exist a constant $C > 0$ and $N \in \N$ such that for all $j \ge N$, $\abs{a_j}
\le C b_j$. We note that if the sequence $\set{b_j}$ is strictly positive, this implies the existence of a (potentially larger)
constant $C'$ such that $\abs{a_j} \le C' b_j$ for all $j \in \N$.
\item For two functions $f,g : \R^d \to \R$, we say that $f= \bigo{g}$ if there
  exist $C > 0$ and $R > 0$ such that $\abs{f(x)} \le C g(x)$ for all $\abs{x}
  \ge R$.
\item We write $a_j = \bigo{j^{-\infty}}$ to indicate that for all $N \in \N$, $a_j =\bigo{j^{-N}}$.
  \end{itemize}
  We can observe that the asymptotic equivalence $a_j \sim b_j$ is equivalent to
  $a_j - b_j = \bigo{j^{-\infty}}$.

\subsection*{Acknowledgements}

The research of J.L. was supported by EPSRC grant EP/P024793/1 and the NSERC
Postdoctoral Fellowship. The research of S.St-A. was supported by NSERC's CGS-M and FRQNT's M.Sc. scholarship (B1X). This work is part of his M.Sc. studies at the Universit\'e de
Montr\'eal, under the supervision of Iosif Polterovich. Both authors would like
to thank him for many useful discussions, and relevant comments when this paper
was in its final stages. The authors would also like to thank Alexandre
Girouard, Michael Levitin,
Leonid Parnovski, and Grigori Rozenblum, who read the preliminary version of
this paper and gave useful suggestions. We also thank the two anonymous referees
who made comments improving the clarity of exposition.

\section{The symbol of the Dirichlet-to-Neumann map on surfaces}\label{sec:LU}

This section will be split into two parts : first, we follow Melrose's
factorisation method, as described in \cite{leeuhlmann}. We will see that the symbol
of $\frac{1}{\rho}\DN_\lambda(\Omega;\tau)$ depends only on $\lambda$, $\rho$,
as well as on the restriction of $\tau$ and the metric $g$ in a
neighbourhood of the boundary $\Sigma$. This will allow us to show that we can
reduce the problem at hand to the situation where $\Omega$ is the unit disk
$\mathbb D$. In the second part of this section, we explicitly compute the value
of the symbol for the disk.

 We recall the definition of the rolling radius (see \cite{howard}) and the construction
of Fermi coordinates. Given $x \in \Sigma$, the exponential map defines a normal
geodesic curve $\gamma_{x} : \R_+ \to \Omega$,
\begin{equation}
  \gamma_x(t) = \exp_x(t\nu)
\end{equation}
The cut point of $x$ is the point $\operatorname{cut}_\Sigma(x) =
\exp_x(t_0\nu)$, where $t_0$ is the minimal $t$ such that $\exp(t\nu)$ does not
minimise the distance to $\Sigma$. Smoothness and compactness of the boundary
ensures that such a $t_0 > 0$ exists. The cut locus of $\Sigma$ is the set
\begin{equation}
  \operatorname{cut}(\Sigma) := \set{\operatorname{cut}_\Sigma(x) : x \in
  \Sigma}.
\end{equation}
The rolling radius of $\Omega$ is defined as
$$\operatorname{Roll}(\Omega) := \operatorname{dist}(\Sigma,\operatorname{cut}(\Sigma)),$$
compactness of $\Sigma$ ensures that $\operatorname{Roll}(\Omega) > 0$. It is
called the rolling radius because any open ball of radius $r \le
\operatorname{Roll}(\Omega)$ can roll along $\Sigma$ while always remaining a
subset of $\Omega$. For some $0 < \eps <
\operatorname{Roll}(\Omega)$, let $\Upsilon$ be a collar $\eps$-neighbourhood of
the boundary,
\begin{equation}
\Upsilon := \set{x' \in \Omega : \operatorname{dist}(x', \Sigma) < \eps}.
\end{equation}
Since $\eps < \operatorname{Roll}(\Omega)$, for every
$x' \in \Upsilon$, there is a unique $x \in \Sigma$ and $t < \eps$ such that $x'
= \exp_x(t\nu)$;
set $x' = (x,t)$. The boundary $\Sigma$ is characterised by $\set{t = 0}$,
and the outward normal derivative is given by $\del_\nu = -\del_t$. In these
coordinates, the metric has a much simpler form as
\begin{equation}
g(x') = \tilde g(x') (\mathrm d x)^2 + (\mathrm d t)^2,
\end{equation}
for some positive function $\tilde g$. The Laplacian reads
\begin{equation}
-\Delta_g = D_t^2 -  \frac i 2 (\del_t \log \tilde g) D_t + \tilde g^{-1} D_x^2
- \frac{i}{2} \frac{(\del_x \tilde g) }{\tilde g^2} D_x,
\end{equation}
where $D_x = - i \del_x$ and $x$ now denotes an arc length parameter along $\Sigma$.

\subsection{Reduction to the disk}

We start by observing that \cite[Propositions 1.1 and 1.2]{leeuhlmann} applies to the Schrödinger operator $H = -\Delta - \lambda\tau$.
\begin{prop} \label{prop:facto}
There is a family $A(x,t,D_x)$ of pseudodifferential operators depending smoothly on $t$ such that
\begin{equation}
- \Delta_g - \lambda \tau(x) \equiv (D_t + iE - i A(x,t,D_x))(D_t + i A(x,t,D_x)) \quad \mod{\Psi^{-\infty}},
\end{equation}
where
\begin{equation}
E := - \frac{i}{2}(\del_t \log \tilde g).
\end{equation}
\end{prop}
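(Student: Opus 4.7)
The plan is to construct $A$ iteratively as a classical $t$-dependent pseudo-differential operator on $\Sigma$ of order one, following the Melrose factorization argument of \cite{leeuhlmann}. The only novelty relative to the Laplacian case is that $-\Delta_g$ is replaced by the Schr\"odinger operator $-\Delta_g - \lambda\tau$, which differs by a zero-order multiplication term; this will perturb only the sub-leading symbols of $A$ without affecting solvability.

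First I would expand the product $(D_t + iE - iA)(D_t + iA)$ using the commutator identity $[D_t, iA] = \partial_t A$ (which holds because $A$ depends smoothly on $t$) together with the fact that $E$ acts by multiplication. A direct computation yields
\begin{equation}
(D_t + iE - iA)(D_t + iA) = D_t^2 + iE D_t + \partial_t A - EA + A^2.
\end{equation}
Matching this with the expression given in the excerpt for $-\Delta_g - \lambda\tau$, the $D_t$ term is absorbed by the definition of $E$, so the factorization reduces modulo smoothing to the symbolic identity
\begin{equation}
A^2 + \partial_t A - EA \equiv \tilde g^{-1} D_x^2 - \frac{i}{2}\frac{\partial_x \tilde g}{\tilde g^2} D_x - \lambda\tau \pmod{\Psi^{-\infty}}.
\end{equation}

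Write $a(x,t,\xi) \sim \sum_{j \ge 0} a_{1-j}(x,t,\xi)$ for the formal symbol of $A$, with $a_{1-j}$ positively homogeneous of degree $1-j$ in $\xi$. Matching principal symbols gives $a_1^2 = \tilde g^{-1}\xi^2$, hence $a_1 = \tilde g^{-1/2}|\xi|$, where the positive root is selected so that $A$ has positive principal symbol, matching the Dirichlet-to-Neumann map. I would then solve recursively at each successive order using the composition formula for $A \# A$: the equation at order $1-j$ takes the schematic form
\begin{equation}
2 a_1 \, a_{-j+1} = F_j\bigl(a_1, a_0, \ldots, a_{2-j};\, \partial^\alpha \tilde g, E, \lambda\tau\bigr),
\end{equation}
where $F_j$ is an explicit polynomial expression in the previously determined symbols and finitely many derivatives of $\tilde g$, $\tau$, and $E$. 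Since $a_1 > 0$ for $\xi \ne 0$, one may divide and obtain $a_{-j+1}$; the potential $-\lambda\tau$ enters directly at $j=1$ and then propagates into all subsequent $F_j$ through the Leibniz-type symbol composition terms. Once every $a_{1-j}$ is constructed, Borel's lemma assembles them into a genuine symbol $a$, and the corresponding operator $A(x,t,D_x)$ realises the claimed factorisation modulo $\Psi^{-\infty}$.

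The main technical point to monitor is the smooth dependence on the parameter $t$: since the statement requires a family $A(x,t,D_x)$ varying smoothly in $t$, each $a_{1-j}$ must be produced as a smooth function of $t$. This follows inductively because the inputs to $F_j$, namely $\tilde g$, $\tau$, $E$ and the lower-order $a_{1-k}$, are all smooth in $t$, and the Borel summation can be carried out uniformly on compact $t$-intervals. Beyond this bookkeeping, the construction is essentially identical to the potential-free Lee--Uhlmann case, so I do not anticipate a genuine obstacle; the extra term $-\lambda\tau$ is of order zero and therefore does not interfere with the inversion of the principal symbol at any step.
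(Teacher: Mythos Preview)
Your proposal is correct and follows exactly the approach the paper takes: the paper's own proof simply cites Lee--Uhlmann and observes that the recursive symbolic construction goes through because the Schr\"odinger operator differs from $-\Delta_g$ only by a zeroth-order term, which is precisely what you spell out. One minor point: to match the paper's conventions (and the subsequent Corollary~\ref{cor:bdry}, where $\DN_\lambda \equiv -A|_\Sigma$), the \emph{negative} root $a_1 = -\tilde g^{-1/2}|\xi|$ is the one selected, not the positive root; this does not affect the factorisation itself, only the identification with the Dirichlet-to-Neumann map.
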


\begin{proof}
The proof follows that of \cite[Proposition 1.1]{leeuhlmann} in computing the
symbol of $A$ recursively. Their construction only relies on ellipticity of $H$,
and the fact that the only derivatives in $t$ are in $\Delta_g$.   
\end{proof}
\begin{rem}
  In subsection \ref{sec:symboldisk}, we make this recursive computation of the
  symbol explicit for the disk, as we need to obtain concrete values of the
  coefficients in that case. The reader interested in a more detailed proof of
  Proposition \ref{prop:facto} can see that this recursive computation also
  works for a general $\Omega$.
\end{rem}

Proposition \ref{prop:facto} admits the same corollary as in \cite{leeuhlmann}.
\begin{cor} \label{cor:bdry}
Let $r(x, \xi)$ be the symbol of $\frac{1}{\rho}\DN_\lambda(\Omega;\tau)$ and $a(x,t,\xi)$ be  the symbol of $A$. Then 
\begin{equation}
r(x,\xi) = - \frac{a(x,0,\xi)}{\rho(x)}.
\end{equation}
In other words,
\begin{equation}
\frac{1}{\rho} \DN_\lambda(\Omega;\tau) \equiv \frac{-1}{\rho} A\big|_{\Sigma}
\quad \mod{\Psi^{-\infty}}.
\end{equation}
In particular, the symbol of $\frac{1}{\rho}\DN_\lambda(\Omega;\tau)$ depends only on $\lambda, \rho$ and the boundary values of $g,\tau$ and of their derivatives.
\end{cor}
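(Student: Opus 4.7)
The plan is to exploit the factorisation from Proposition \ref{prop:facto} to solve the Dirichlet problem microlocally, then read the Dirichlet-to-Neumann map directly off the first-order factor. Given boundary data $u \in C^\infty(\Sigma)$, I would first construct, in the Fermi collar $\Upsilon$, a smooth function $\tilde u$ satisfying the evolution equation
\begin{equation}
(D_t + i A(x,t,D_x))\tilde u = 0 \quad \text{in } \Upsilon, \qquad \tilde u\big|_{t=0} = u.
\end{equation}
Because $A$ is a classical pseudodifferential operator of order one whose principal symbol $\abs{\xi}_g$ is strictly positive, this is a parabolic-type first-order evolution in $t$, solvable up to $C^\infty$ error by the standard parametrix construction of Taylor \cite{taylorpartial} (and as used in \cite{leeuhlmann}).

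From the factorisation
\begin{equation}
-\Delta_g - \lambda \tau = (D_t + iE - iA)(D_t + iA) \mod \Psi^{-\infty},
\end{equation}
any such $\tilde u$ automatically satisfies $(-\Delta_g - \lambda\tau)\tilde u \in C^\infty(\Omega)$, so $\tilde u$ solves the Dirichlet problem up to a smooth error term. Since $\lambda \in \CV$, the genuine solution to the Dirichlet problem exists and is unique, and it differs from $\tilde u$ by an element of $C^\infty(\bar\Omega)$; such a smooth discrepancy contributes only a smoothing operator to the boundary map, and so is invisible at the level of the full symbol.

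Next I would compute the normal derivative. In Fermi coordinates $\del_\nu = -\del_t$, and evaluating the evolution equation at $t = 0$ gives
\begin{equation}
-i\,\del_t \tilde u\big|_{t=0} + i A(x,0,D_x) u = 0,
\end{equation}
hence $\del_\nu \tilde u\big|_{t=0} = -A(x,0,D_x) u$. This identifies $\DN_\lambda(\Omega;\tau) \equiv -A\big|_\Sigma \mod \Psi^{-\infty}$, whose full symbol is therefore $-a(x,0,\xi)$. Multiplying by the strictly positive smooth function $1/\rho(x)$ commutes with the equivalence modulo $\Psi^{-\infty}$ and yields
\begin{equation}
r(x,\xi) = -\frac{a(x,0,\xi)}{\rho(x)},
\end{equation}
as claimed. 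Since the recursive construction of the symbol $a(x,t,\xi)$ in Proposition \ref{prop:facto} only feeds on $g$, $\tau$ and their $t$-derivatives at $t = 0$ together with the parameter $\lambda$, and $\rho$ enters only as a boundary factor, the final claim about dependence is immediate.

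The only delicate point is the legitimacy of passing from the microlocal solution $\tilde u$ to the true Dirichlet solution without altering the symbol, but this is routine given the uniqueness of the Dirichlet problem for $\lambda \in \CV$ and the fact that $\Psi^{-\infty}$ operators map distributions into $C^\infty$; the argument is essentially identical to the one in \cite{leeuhlmann}, with the potential $-\lambda \tau$ absorbed harmlessly into the lower-order symbol recursion.
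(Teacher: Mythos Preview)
Your approach is correct and is precisely the Lee--Uhlmann argument to which the paper defers (the paper gives no independent proof, simply noting that Proposition~\ref{prop:facto} ``admits the same corollary as in \cite{leeuhlmann}''). One small slip: the principal symbol of $A$ is $-\abs{\xi}_g$, not $+\abs{\xi}_g$ (cf.\ the disk computation $a_1 = -\abs{\xi}/(1-t)$); it is exactly this sign that makes the evolution $(D_t + iA)\tilde u = 0$, i.e.\ $\partial_t \tilde u = A\tilde u$, parabolic and forward-solvable into the interior, so your well-posedness claim stands once the sign is corrected.
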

We denote by $\sigma_j(\Omega; \tau; \rho; \lambda)$ the $j^{\mathrm{th}}$ eigenvalue of $\frac{1}{\rho} \DN_\lambda(\Omega,\tau)$.

\begin{lem} \label{lem:isombdry}
Let $\Omega_1$, $\Omega_2$ be compact Riemannian surfaces with smooth boundary
$\Sigma_1, \Sigma_2$. Suppose there exists an isometry $\phi$ between collar
neighbourhoods $\Upsilon_1$ of $\Sigma_1$ and $\Upsilon_2$ of $\Sigma_2$. Let
$\tau \in C^\infty(\Omega_2)$ and $\rho \in C^\infty(\Sigma_2)$. Then,
\begin{equation}
\sigma_j(\Omega_1;\varphi^*\tau;\varphi^*\rho;\lambda) \sim
\sigma_j(\Omega_2;\tau;\rho;\lambda)
\end{equation}
where $\varphi^*$ denotes the pullback by $\varphi$.
\end{lem}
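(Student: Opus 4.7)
The idea is to reduce the spectral comparison to a comparison of full symbols. By Corollary \ref{cor:bdry}, the full symbol of $\frac{1}{\rho}\DN_\lambda(\Omega;\tau)$ is determined by $\lambda$, $\rho$, and the boundary jets of the metric and of $\tau$. I would first verify that $\varphi$ identifies precisely this symbol-determining data on the two sides, and then invoke Rozenblum's algorithm (to be reviewed in Sections \ref{sec:trans} and \ref{sec:asymp}) to conclude that equality of full symbols modulo $\Psi^{-\infty}$ forces asymptotic equivalence of the spectra.

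Since $\varphi : \Upsilon_1 \to \Upsilon_2$ is an isometry between collar neighbourhoods, its restriction $\varphi|_{\Sigma_1}$ is an isometry of the boundary circles, and Fermi coordinates on one side correspond under $\varphi$ to Fermi coordinates on the other (because an isometry maps normal geodesics to normal geodesics). Every boundary jet of $g$, $\tau$ and $\rho$ on $\Sigma_2$ therefore pulls back under $\varphi|_{\Sigma_1}$ to the corresponding jet of $\varphi^* g$, $\varphi^*\tau$ and $\varphi^*\rho$ on $\Sigma_1$. Applying Corollary \ref{cor:bdry} to each side and identifying $\Sigma_1$ with $\Sigma_2$ via $\varphi|_{\Sigma_1}$, the full symbols of $\frac{1}{\varphi^*\rho}\DN_\lambda(\Omega_1;\varphi^*\tau)$ and $\frac{1}{\rho}\DN_\lambda(\Omega_2;\tau)$ coincide. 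Hence the two operators differ by an element of $\Psi^{-\infty}$.

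It then remains to observe that two self-adjoint elliptic pseudodifferential operators of order $1$ on a compact $1$-manifold whose symbols agree modulo $\Psi^{-\infty}$ have asymptotically equivalent eigenvalue sequences in the sense of Definition \ref{def:asympequiv}. This is exactly the output of Rozenblum's procedure from \cite{rozenblumalmostsim}: the complete asymptotic expansion of the spectrum is computed from the full symbol, with every symbolic manipulation performed modulo smoothing. Two operators whose symbols agree modulo $\Psi^{-\infty}$ thus yield the same asymptotic sequence $B_j^{(\infty)}$, and the claimed asymptotic equivalence $\sigma_j(\Omega_1;\varphi^*\tau;\varphi^*\rho;\lambda) \sim \sigma_j(\Omega_2;\tau;\rho;\lambda)$ follows directly from the definition of $\sim$.

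The only real obstacle, in my view, is to ensure that the symbolic framework developed in Sections \ref{sec:trans} and \ref{sec:asymp} is genuinely insensitive to $\Psi^{-\infty}$ perturbations at every stage, so that smoothing contributions cannot leak into any polynomial order of the expansion. Naive min-max bounds $|\lambda_j(A) - \lambda_j(B)| \le \|A - B\|$ are useless here, as they give only a uniform error; what saves us is that Rozenblum's construction manipulates symbols rather than operators, making $\Psi^{-\infty}$ remainders invisible by design. Granting this robustness, the lemma is an immediate corollary of Corollary \ref{cor:bdry}.
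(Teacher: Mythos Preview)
Your proof is correct and follows essentially the same line as the paper's: both invoke Corollary~\ref{cor:bdry} to conclude that the two weighted Dirichlet-to-Neumann operators agree modulo $\Psi^{-\infty}$, and then use that smoothing perturbations do not affect the eigenvalue asymptotics. The only difference is in how that last step is justified: the paper simply cites \cite[Lemma~2.1]{GPPS}, whereas you argue via the symbolic nature of Rozenblum's diagonalisation from Sections~\ref{sec:trans}--\ref{sec:asymp}; both are valid, and your route has the mild advantage of being self-contained at the cost of a forward reference.
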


\begin{proof}
By Corollary \ref{cor:bdry}, the operators
$\frac{1}{\varphi^*\rho}\DN_\lambda(\Omega_1;\varphi^*\tau)$ and
$\frac{1}{\rho}\DN_\lambda(\Omega_2;\tau)$ have the same symbol, or in other
words they are equivalent up to smoothing operators:
\begin{equation}
\frac{1}{\varphi^*\rho}\DN_\lambda(\Omega_1;\varphi^*\tau) \equiv \frac{1}{\rho}\DN_\lambda(\Omega_2;\tau) \quad \mod{\Psi^{-\infty}}.
\end{equation}
In \cite[Lemma 2.1]{GPPS}, it is shown that whenever two elliptic selfadjoint
pseudodifferential operators are equivalent up to smoothing operators, then
their eigenvalues are asymptotically equivalent.
\end{proof}

\begin{lem} \label{lem:todisksc}
Let $\Omega$ be a compact simply connected surface with smooth boundary $\Sigma$. Let $\varphi : \overline{\D} \to \Omega$ be conformal. Then, the Steklov problem \eqref{prob:gen} on $\Omega$ is isospectral to the problem
\begin{equation}
\begin{cases}
- \Delta u = \lambda e^{2f} (\varphi^*\tau) u &\text{in } \D; \\
\del_\nu u = \sigma e^f (\varphi^*\rho) u & \text{on } \S^1;
\end{cases}
\end{equation}
where $f : \overline{\D} \to \R$ is such that $\varphi^*g = e^{2f} g_0$.
\end{lem}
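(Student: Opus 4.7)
The plan is to exploit the two key conformal transformation rules in dimension two and then establish a bijection between eigenpairs of the two problems.

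First, I would recall that any simply connected compact surface with smooth boundary admits a conformal diffeomorphism $\varphi : \overline{\D} \to \Omega$ (by uniformization, or equivalently by applying the Riemann mapping theorem after doubling across the boundary), so that $\varphi^* g = e^{2f} g_0$ for some smooth $f : \overline{\D} \to \R$. This justifies the setup of the lemma.

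The next step is to write down the two conformal transformation rules used throughout the proof. In dimension two, the Laplacian satisfies $\Delta_g = e^{-2f} \Delta_{g_0}$; consequently, if $u$ solves the interior equation on $\Omega$ and we set $v = \varphi^* u \in C^\infty(\overline{\D})$, then
\begin{equation}
\Delta_g u = 0 + \lambda \tau u \iff e^{-2f}\Delta_{g_0} v = -\lambda (\varphi^*\tau) v \iff -\Delta_{g_0} v = \lambda e^{2f} (\varphi^*\tau) v,
\end{equation}
which gives the interior equation of the disk problem. For the boundary condition, one uses that the outward unit normal $\nu$ on $(\Sigma,g)$ corresponds under $\varphi$ to $e^{-f} \nu_0$, where $\nu_0$ is the outward unit normal of $\S^1$ with respect to $g_0$. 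Therefore $\varphi^*(\partial_\nu u) = e^{-f} \partial_{\nu_0} v$, and the Steklov-type boundary condition becomes
\begin{equation}
\partial_\nu u = \sigma \rho u \iff e^{-f}\partial_{\nu_0} v = \sigma (\varphi^*\rho) v \iff \partial_{\nu_0} v = \sigma e^{f} (\varphi^*\rho) v,
\end{equation}
which is exactly the boundary condition in the disk problem.

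Finally, I would observe that $u \mapsto \varphi^* u$ is a linear bijection $C^\infty(\Omega) \to C^\infty(\overline\D)$ preserving the Dirichlet trace (after identifying $\Sigma$ with $\S^1$ via $\varphi$), so the correspondence above identifies eigenpairs of the two problems with the same eigenvalue $\sigma$, preserving multiplicities. Hence the two problems are isospectral. The only substantive computation is the conformal transformation of $\partial_\nu$; in two dimensions this is routine because $g$ and $g_0$ differ only by a positive scalar, so I do not expect any real obstacle — the proof is essentially a change-of-variables check.
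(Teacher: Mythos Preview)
Your proposal is correct and follows essentially the same route as the paper: both arguments reduce to the two conformal transformation rules $\Delta_{g_0}(\varphi^* u) = e^{2f}\varphi^*(\Delta_g u)$ and $\partial_{\nu,g_0}(\varphi^* u) = e^{f}\varphi^*(\partial_{\nu,g} u)$, from which isospectrality is immediate via the pullback bijection. Your write-up is slightly loose in places (writing ``$\Delta_g = e^{-2f}\Delta_{g_0}$'' conflates operators living on different manifolds, and the line ``$\Delta_g u = 0 + \lambda\tau u$'' has a sign slip relative to $(\Delta_g + \lambda\tau)u = 0$), but these are cosmetic and the argument goes through unchanged.
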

\begin{proof}
  It follows directly from the observation, see \cite{JolSharaf}, that the Laplacian and normal derivatives transform under a conformal mapping $\varphi : (\overline{\D},g_0) \to (\Omega;g)$ as
\begin{equation}
\Delta_{g_0} (\varphi^* u) = e^{2f} \varphi^*(\Delta_g u)
\end{equation}
and
\begin{equation}
\del_{\nu,g_0} (\varphi^* u) = e^{f} \varphi^*(\del_{\nu,g} u)
\end{equation}
respectively.
\end{proof}
\begin{lem}
  \label{lem:cutting}
  Let $(\Omega,g)$ be a compact Riemannian surface whose smooth boundary
  $\Sigma$ has $\ell$ connected components $\Sigma_1,\dotsc,\Sigma_\ell$. For
  every $1 \le m \le \ell$, there exist a metric $g_m$ on the unit disk,
  a collar neighbourhood $\Upsilon_m$ of $\Sigma_m$, and
  a collar neighbourhood $\tilde \Upsilon_m$ of $\S^1$ such that $g_m\big|_{\tilde \Upsilon_m}$ is
  isometric to $g\big|_{\Upsilon_m}$.
\end{lem}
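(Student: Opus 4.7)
The plan is to construct $g_m$ on $\D$ by importing the geometry of a tubular neighbourhood of $\Sigma_m$ via Fermi coordinates, and then interpolating smoothly with a fixed background metric deeper in the disk.

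First, I would fix $0 < \eps < \operatorname{Roll}(\Omega)$. Since $\Sigma_m$ is a smooth compact connected one-manifold without boundary, it is diffeomorphic to $\S^1$, and I choose such a diffeomorphism $\psi_m : \S^1 \to \Sigma_m$. Composing with the exponential map in the inward normal direction gives a diffeomorphism
\begin{equation*}
  \Psi_m : \S^1 \times [0, \eps) \to \Upsilon_m, \qquad (\theta, t) \mapsto \exp_{\psi_m(\theta)}(t\nu),
\end{equation*}
onto a collar $\Upsilon_m$ of $\Sigma_m$ in $\Omega$. Because $\eps < \operatorname{Roll}(\Omega)$, Fermi coordinates apply and $\Psi_m^* g = h_m(\theta, t)\, d\theta^2 + dt^2$ for some smooth positive function $h_m$.

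Second, I would transplant this product model into the disk. Identify a collar of $\partial \D = \S^1$ in $\D$ with $\S^1 \times [0, \eps)$ via $\iota(\theta, t) := (1-t)(\cos\theta, \sin\theta)$, and set
\begin{equation*}
  g^\flat := (\iota^{-1})^* \bigl( h_m(\theta, t)\, d\theta^2 + dt^2 \bigr)
\end{equation*}
on the image of $\iota$. Let $g^\sharp$ be any smooth Riemannian metric on $\D$ (e.g.~the Euclidean one), and fix a smooth cutoff $\chi : \D \to [0,1]$ that equals $1$ on $\tilde \Upsilon_m := \iota(\S^1 \times [0, \eps/3))$ and vanishes outside $\iota(\S^1 \times [0, \eps/2))$. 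Define
\begin{equation*}
  g_m := \chi\, g^\flat + (1 - \chi)\, g^\sharp,
\end{equation*}
where $g^\flat$ is interpreted as $0$ outside its domain (legitimate since $\chi$ vanishes there). This $g_m$ is smooth and positive-definite: it is a convex combination of Riemannian metrics wherever both summands are defined, and it coincides with $g^\sharp$ elsewhere.

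Third, since $\chi \equiv 1$ on $\tilde \Upsilon_m$, we have $g_m \big|_{\tilde \Upsilon_m} = g^\flat \big|_{\tilde \Upsilon_m}$, so after shrinking $\Upsilon_m$ to $\Psi_m(\S^1 \times [0, \eps/3))$ the composition $\Psi_m \circ \iota^{-1}$ restricts to an isometry $(\tilde \Upsilon_m, g_m) \to (\Upsilon_m, g)$. There is no serious obstacle in this argument; it is a standard partition-of-unity construction for Riemannian metrics, the only bookkeeping being the placement of the cutoff so that the resulting tensor is simultaneously smooth across the gluing region and positive-definite.
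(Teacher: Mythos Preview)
Your argument is correct and is essentially the same as the paper's: both pull back $g$ to an annular collar of $\S^1$ via Fermi coordinates and then interpolate with the Euclidean metric using a cutoff/partition of unity. The only cosmetic omission is that you should also require $\eps < 1$ (the paper takes $\eps < \min\{\operatorname{Roll}(\Omega),1/2\}$) so that $\iota$ is a diffeomorphism onto an annulus in $\D$.
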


\begin{proof}
  Since $\Sigma_m$ is a closed curve, we can assume without loss of generality
  that it is parameterised as $f : [0,2\pi] \to \Sigma_m$. Let $0 < \eps <
  \min\set{\operatorname{Roll}(\Omega),1/2}$ and $\Upsilon_m(\eps)$ be an
  $\eps$-collar neighbourhood of $\Sigma_m$. Let $\tilde \Upsilon_m(\eps) \subset \D$ be defined
  as
  \begin{equation}
    \tilde \Upsilon_m(\eps) := \set{(r, \theta) \in \D : r > 1- \eps}.
  \end{equation}
  Endow $\Upsilon_m$ with Fermi coordinates and define $\phi_m : \tilde
  \Upsilon_m(\eps) \to \Upsilon_m(\eps)$ by $\phi(1-t,\theta) =
(f(\theta),t)$, and let $\tilde g_m = \phi_m^* g$ so that $\tilde g_m$ is
  isometric to $g$. Let
  \begin{equation}
    U := \set{(r,\theta) \in \D : r < 1- \frac \eps 2}
  \end{equation}
  and let $\set{\psi_{\Upsilon},\psi_U}$ be a partition of unity subordinated to
  $(\Upsilon_m(\eps),U)$. Define the metric $g$ on $\D$ as
  \begin{equation}
    g_m = \psi_\Upsilon \tilde g_m + \psi_U g_0,
  \end{equation}
  where $g_0$ is the flat metric. Then, $g_m$ is the desired metric with
  $\Upsilon_m = \Upsilon_m(\eps/2)$ and $\tilde \Upsilon_m = \tilde
  \Upsilon_m(\eps/2)$. 
\end{proof}
\begin{figure}[!h]
\centering
\scalebox{0.8}{
\begin{tikzpicture}
		\node  (10) at (-11.25, 3.25) {};
		\node  (11) at (-6, 4) {};
		\node  (12) at (-5.75, -0.25) {};
		\node  (13) at (-11.25, 0.25) {};
		\node  (4) at (-9.25, 1.5) {};
		\node  (5) at (-6.5, 1.5) {};
		\node  (6) at (-9.625, 1.5) {};
		\node  (7) at (-6.125, 1.5) {};
		\node  (8) at (-10, 1.5) {};
		\node  (9) at (-5.75, 1.5) {};
		\node  (0) at (-11.25, 3.25) {};
		\node  (1) at (-6, 4) {};
		\node  (2) at (-5.75, -0.25) {};
		\node  (3) at (-11.25, 0.25) {};
		\draw [style=transparent edge] (11.center)
			 to [in=30, out=135, looseness=0.75] (10.center)
			 to [bend right=15] (13.center)
			 to [in=-120, out=-45] (12.center)
			 to [in=-45, out=60] cycle;
		\draw [very thick,in=135, out=30, looseness=0.75] (0.center) to (1.center);
		\draw [very thick, in=60, out=-45] (1.center) to (2.center);
		\draw [very thick, in=-120, out=-45] (3.center) to (2.center);
		\draw[dashes2,very thick] (9.center)
			 to [in=-105, out=-75] (8.center)
			 to [in=105, out=75] cycle;
		\draw[-, very thick, dashed, fill={rgb,255: red,100; green,100; blue,100}] (7.center)
			 to [in=75, out=105] (6.center)
			 to [in=-75, out=-105] cycle;
		\draw[white,very thick] (5.center)
			 to [in=-105, out=-75] (4.center)
			 to [in=105, out=75] cycle;
		\draw[hole,very thick] (5.center)
			 to [in=-105, out=-75] (4.center)
			 to [in=105, out=75] cycle;
		
		\draw[very thick,fill ={rgb,255: red,100; green,100; blue,100}] (1,1.5) circle(2);
		\shade[draw=none,ring shading={from {rgb,255: red,200; green,200; blue,200} at 1 to {rgb,255: red,100; green,100; blue,100} at 1.5}]  (1,1.5) circle (1) circle (1.5);
		\draw[very thick,dashed] (1,1.5) circle(1.5);
		\draw[very thick,grey,dotted] (1,1.5) circle(1.05);
			 
		\draw [very thick, bend left=25, looseness=1, <->] (-4,1.5) to (-2,1.5);
		\node [scale = 1.5, anchor = south] at (-3,1.8) {$\varphi_m$};
		\node [scale = 1.5] at (-10,3) {$\Omega$};
		\node [scale = 1.5] at (-8,-0.3) {$\Sigma_m$};
		\draw (-8,0) to (-7.9,0.74);
		\node [scale = 1.5] at (-7,3.3) {$\Upsilon_m$};
		\draw (-7.5,2.35) to (-7.3,3);
		\node [scale = 1.5] at (1,1.5) {$\D$};
		\node [scale = 1.5] at (-1,-0.5) {$\tilde{\Upsilon}_m$};
		\draw (-1,-0.5) to (-0.3,0.2);
\end{tikzpicture}}
\caption{Isometric collar neighbourhoods of $\Sigma_m$ and $\S^1$. In the outer,
darker region $\tilde \Upsilon_m$ of $\D$, the metric is isometric to the
metric on $\Upsilon_m$. In the inner, lighter region, it is the Euclidean
metric. In the intermediate region, it is a convex combination of both the
pullback of a metric in the intermediate region on $\Omega$, and the Euclidean
metric.}
\end{figure}
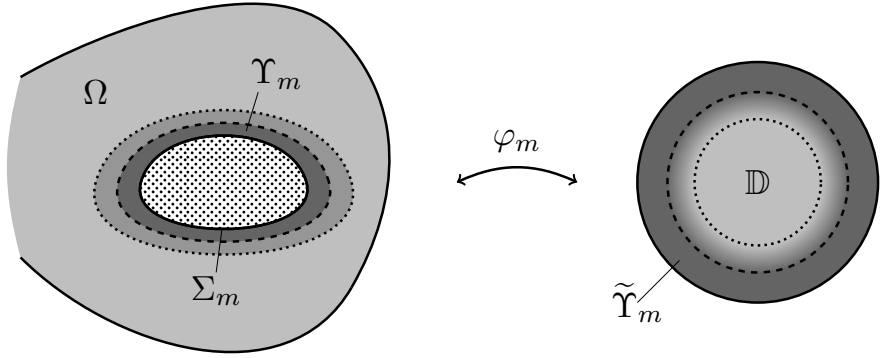
This leads us to the main theorem of this subsection, reducing the problem to
the one on the unit disk.
\begin{thm} \label{thmreduction}
Let $(\Omega,g)$ be a compact Riemannian surface whose smooth boundary $\Sigma$ has $\ell$ connected components $\Sigma_1, \dots, \Sigma_\ell$, and let $\Omega'$ be the disjoint union of $\ell$ identical unit disks
$\D_1,\dots, \D_\ell$ with boundary $\Sigma' = \S^1_1 \sqcup \dotso \sqcup \S^1_\ell$. There exist
\begin{equation}
\tau_0 : \Omega' \to \C \qquad \text{and} \qquad \rho_0 : \Sigma' \to \C.
\end{equation}
such that 
\begin{equation}
\sigma_j(\Omega;\tau;\rho;\lambda) \sim
\sigma_j(\Omega';\tau_0;\rho_0;\lambda) .
\end{equation}
\end{thm}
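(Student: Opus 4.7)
The plan is to combine the cutting construction of Lemma \ref{lem:cutting} with the conformal reduction of Lemma \ref{lem:todisksc}, glued together by Lemma \ref{lem:isombdry}, which says that the asymptotic spectrum of $\tfrac{1}{\rho}\DN_\lambda$ depends only on the data of the problem in a collar neighbourhood of the boundary. Throughout, we use the elementary fact that the spectrum of the Dirichlet-to-Neumann operator on a disjoint union of surfaces is the union, with multiplicity, of the spectra on the individual components.

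For the first step, I would apply Lemma \ref{lem:cutting} to each boundary component $\Sigma_m$ of $\Omega$ to produce a metric $g_m$ on the unit disk, a collar neighbourhood $\Upsilon_m$ of $\Sigma_m$, a collar neighbourhood $\tilde \Upsilon_m$ of $\S^1$ in $(\D, g_m)$, and an isometry $\phi_m : (\tilde \Upsilon_m, g_m) \to (\Upsilon_m, g)$. I would then define $\tau_m \in C^\infty(\D)$ to be any smooth extension of $\phi_m^* (\tau|_{\Upsilon_m})$ to all of $\D$, and $\rho_m \in C^\infty(\S^1)$ to be any smooth extension of $\phi_m^*(\rho|_{\Sigma_m})$ to all of $\S^1$. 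Setting
\begin{equation}
\Omega_\sharp := \bigsqcup_{m=1}^\ell (\D, g_m),
\end{equation}
and letting $\tau_\sharp, \rho_\sharp$ be the functions obtained by piecing together the $\tau_m$ and $\rho_m$ componentwise, Lemma \ref{lem:isombdry} applied to each pair $\bigl((\D, g_m), \Omega\bigr)$ combined with the direct-sum description of the spectrum yields
\begin{equation}
\sigma_j(\Omega; \tau; \rho; \lambda) \sim \sigma_j(\Omega_\sharp; \tau_\sharp; \rho_\sharp; \lambda).
\end{equation}

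For the second step, I would apply Lemma \ref{lem:todisksc} individually to each simply connected component $(\D, g_m)$. The Riemann mapping theorem, with regularity up to the boundary, produces a conformal diffeomorphism $\varphi_m : \overline \D \to (\D, g_m)$ with $\varphi_m^* g_m = e^{2 f_m} g_0$ for some smooth $f_m : \overline \D \to \R$. Lemma \ref{lem:todisksc} then gives isospectrality of the problem on $(\D, g_m, \tau_m, \rho_m)$ with the flat-disk problem with potential $e^{2 f_m}\, \varphi_m^* \tau_m$ and density $e^{f_m}\, \varphi_m^* \rho_m$. Writing $\Omega' = \bigsqcup_{m=1}^\ell \D_m$ and defining
\begin{equation}
\tau_0\big|_{\D_m} := e^{2 f_m}\, \varphi_m^* \tau_m, \qquad \rho_0\big|_{\S^1_m} := e^{f_m}\, \varphi_m^* \rho_m,
\end{equation}
the direct-sum structure produces $\sigma_j(\Omega_\sharp; \tau_\sharp; \rho_\sharp; \lambda) = \sigma_j(\Omega'; \tau_0; \rho_0; \lambda)$, which combined with the previous display completes the proof.

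The step that requires the most care is verifying that choices made outside of the collar neighbourhoods --- the extensions of $\tau_m$ and $\rho_m$ away from $\tilde \Upsilon_m$, the cutoff radius $\eps$ used in Lemma \ref{lem:cutting}, and the partition of unity used to manufacture $g_m$ --- do not affect the asymptotic spectrum. This is precisely what Lemma \ref{lem:isombdry} buys us: two problems whose data agree on some collar neighbourhood of the boundary have asymptotically equivalent spectra, because the symbol of $\tfrac{1}{\rho}\DN_\lambda$ modulo $\Psi^{-\infty}$ is determined by the boundary jets of $g$, $\tau$, and $\rho$ (Corollary \ref{cor:bdry}). Once that independence is invoked, the remainder of the argument is pure bookkeeping.
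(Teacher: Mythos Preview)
Your proof is correct and follows the same approach as the paper: cut with Lemma~\ref{lem:cutting}, compare spectra via Lemma~\ref{lem:isombdry}, then conformally flatten each disk with Lemma~\ref{lem:todisksc}. One small imprecision: Lemma~\ref{lem:isombdry} cannot be applied ``to each pair $((\D,g_m),\Omega)$'' individually, since the hypothesis requires an isometry between collar neighbourhoods of the \emph{full} boundaries, and $\Omega$ has $\ell$ components while $(\D,g_m)$ has one; the correct invocation (as in the paper) is a single application to the pair $(\Omega,\Omega_\sharp)$, with the collar isometry assembled from the $\phi_m$. Also, $\phi_m$ restricts to a diffeomorphism $\S^1 \to \Sigma_m$, so $\phi_m^*(\rho|_{\Sigma_m})$ is already defined on all of $\S^1$ and no extension is needed there.
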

\begin{proof}
The proof follows that of \cite[Theorem 1.4]{GPPS}. For $1 \le m \le \ell$, let
$\Omega_m$ be a topological disk with a Riemannian metric that is isometric to a
collar neighbourhood $\Upsilon_m$ of $\Sigma_m$, and denote by $\Omega_\sharp$
the union of the disks $\Omega_m$. We abuse notation and denote also by $\tau$
any smooth function on $\Omega_\sharp$ whose value on $\Upsilon_m$ coincides
with $\tau$ on $\Omega$. This is justified since only its value in a neighbourhood of the
boundary affects eigenvalue asymptotics. It follows from Lemma \ref{lem:isombdry}
that
\begin{equation}
\abs{\sigma_j(\Omega;\tau;\rho;\lambda) -
\sigma_j(\Omega_\sharp;\tau;\rho;\lambda)} = \bigo{j^{-\infty}}.
\end{equation}
For every $m$ the Riemann mapping theorem implies the existence of a conformal
diffeomorphism $\varphi_m : (\overline{\D_m},g_0) \to (\Omega_m,g_m)$. Given that
$\varphi_m^* g_m =
e^{2f_m} g_0$, define $\tau_0$ and $\rho_0$ for $x \in \D_m$ and $\S^1_m$
respectively as
\begin{equation}
\begin{cases}
\tau_0(x) := e^{2f_m} \tau(\varphi_m(x)); \\
\rho_0(x) := e^{f_m} \rho(\varphi_m(x)).
\end{cases}
\end{equation}
It follows from Lemma \ref{lem:todisksc} that $\frac{1}{\rho}
\DN_\lambda(\Omega_m;\tau)$ is isospectral to
$\frac{1}{\rho_0}\DN_\lambda(\overline{\D_m};\tau_0)$. The conclusion then follows
from the fact that the spectrum of the Dirichlet-to-Neumann map defined on a disjoint union of domains is the
union of their respective spectra.
\end{proof}

\subsection{The symbol of the Dirichlet-to-Neumann map on the disk}
\label{sec:symboldisk}

We now compute the full symbol of $\Lambda := \frac{1}{\rho} \DN_\lambda(\D;\tau)$
on $\S^1 = \del \D$ from the factorisation
obtained in Proposition \ref{prop:facto}. Let us introduce boundary normal
coordinates $(x,t)$ for the collar neighbourhood $\S^1 \times [0,\delta)$, for
some small but fixed $\delta$. The flat metric in these coordinates reads
\begin{equation}
g(x,t) = (1-t)^2 (\mathrm dx)^2 + (\mathrm dt)^2,
\end{equation}
and the Laplacian reads as
\begin{equation}
- \Delta =  D_t^2 + \frac{i}{1 - t} D_t + \frac{1}{(1-t)^2} D_x^2.
\end{equation}
We are therefore looking for a factorisation of the form
\begin{equation}
 - \Delta_g - \lambda \tau(x) \equiv (D_t + iE(t) - i A(x,t,D_x))(D_t + i A(x,t,D_x)) \quad \mod{\Psi^{-\infty}},
\end{equation}
where $E(t) =  (1-t)^{-1}$. 

Rearranging, this implies finding $A$ such that 
\begin{equation}
A^2(x,t,D_x) - \frac{1}{(1 - t)^2}D_x^2 + i[D_t,A] - E(t) A(x,t,D_x) + \lambda \tau(x) \equiv 0 \quad \mod{\Psi^{-\infty}},
\end{equation}
which at the level of symbols is tantamount to finding $a(x,t,\xi)$ such that
\begin{equation}\label{symbolLUgen}
\sum_{K \ge 0} \frac{1}{K!}(\del_{\xi}^K a) (D_{x}^K a) - \frac{\xi^2}{(1-t)^2} + \del_{t} a - \frac{a}{1 - t} + \lambda \tau = 0,
\end{equation}
where
\begin{equation}
  a(x,t,\xi) \sim \sum_{m\leq 1} a_m(x,t,\xi)
\end{equation}
is the symbol of $A$ and the coefficients $a_m$ are positively homogeneous of degree $m$ in $\xi$.

By gathering the terms of degree two, we obtain
\begin{equation}
a_1 = -\frac{\abs{\xi}}{1 - t},
\end{equation}
while gathering the terms of degree one yields
\begin{equation}
a_0(x,t,\xi) = \frac{-1}{2a_1}\left(\del_t a_1 - \frac{a_1}{1-t}\right) = 0.
\end{equation}
One can observe that neither $a_1$ nor $a_0$ depend on $\lambda \tau$. However, by gathering the terms of order 0, we get
\begin{equation}
a_{-1}(x,t,\xi) = \frac{-\lambda \tau}{2a_1} = \frac{\lambda(1-t) \tau}{2\abs{\xi}}.
\end{equation}
For $m \leq -1$, $a_{m-1}$ is found recursively by gathering the terms of order $m$ and is given by
\begin{equation}\label{eq:am}
a_{m-1}(x,t,\xi) = -\frac{1}{2a_1}\left(\sum_{\substack{j,k \\ m \leq j, k \leq
1 \\ \gamma = j+k-m}} \frac{1}{\gamma!} D_{\xi}^\gamma (a_j) \del_{x}^\gamma(a_k) + \del_{t}a_m - \frac{a_m}{1-t} \right).
\end{equation}
Note that this is the same recurrence relation as the one appearing in
\cite{leeuhlmann} as soon as $m < -1$. For the sequel, we will require explicit knowledge of the term of order $-2$. From the previous equation we deduce that
\begin{equation}
 a_{-2}(x,t,\xi) = \frac{(1-t)\lambda}{4 \abs \xi^2} \left(i \tau_x \sgn(\xi) - 2 \tau +(1-t)\tau_t\right).
\end{equation}

As indicated by Corollary \ref{cor:bdry}, the symbol of $\Lambda$ is given by
$$r(x,\xi) = -\rho(x)^{-1}a(x,0,\xi)$$ where the sign is chosen so that $\Lambda$
is a positive operator. Note that $\del_t$ is the interior normal derivative
hence $\del_t = - \del_\nu$. Writing $f(x) := f(x,0)$ for the restriction of any
function to the boundary, the first few terms of the symbol of $\Lambda$ read
as
\begin{equation}\label{symbolLU}
r(x,\xi) = \frac{\abs \xi}{\rho(x)} - \frac{\lambda \tau(x)}{2 \rho(x) \abs \xi} + r_{-2}(x,\xi) + \bigo{\abs \xi^{-3}},
\end{equation}
with
\begin{equation}\label{symbolLU2}
r_{-2}(x,\xi) = \frac{-\lambda}{4 \rho(x) \abs \xi^2} \left(i \tau_x(x) \sgn(\xi) - 2 \tau(x) -  \del_\nu \tau(x)\right).
\end{equation}

\subsection{Symmetries of the symbol}

When $\lambda$ and $\tau$ are real, we see from these first expressions, that the real part of the symbol is an even function of $\xi$, while its imaginary part is an odd function of $\xi$. This is equivalent to the following definition.
\begin{defi}\label{def:hermitian}
A symbol $a(x,\xi)$ is hermitian if $a(x,-\xi) = \overline{a(x,\xi)}$ for all $x,\xi \in \R$.
\end{defi}

We now show recursively that the symbol of $\Lambda$ is hermitian.

\begin{prop}\label{propstructure}
  For $\lambda \in \R$, $\tau \in C^\infty(\D;\R)$, the symbol $r_m$ is
  hermitian for all $m \leq 1$.
\end{prop}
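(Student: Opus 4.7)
The plan is to prove Proposition \ref{propstructure} by strong downward induction on $m$, applied to the full symbols $a_m(x,t,\xi)$; then the statement for $r_m = -\rho(x)^{-1} a_m(x,0,\xi)$ follows immediately because $\rho$ is a real positive function of $x$, and multiplication by such a function preserves the hermitian property. The engine of the induction is the observation that hermitian symbols form a class closed under all the operations that appear in the recurrence \eqref{eq:am}.

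First I would record the closure properties. If $a$ and $b$ are hermitian then $ab$ is hermitian, since $a(x,-\xi)b(x,-\xi) = \overline{a(x,\xi)}\cdot\overline{b(x,\xi)} = \overline{a(x,\xi)b(x,\xi)}$. If $a$ is hermitian and nonvanishing, then $1/a$ is hermitian by the same identity. Multiplication by any real-valued function of $(x,t)$ alone is automatically hermitian, and differentiation in $x$ or $t$ preserves the property trivially. The only operation requiring some care is $D_\xi=-i\partial_\xi$: differentiating the identity $a(x,-\xi)=\overline{a(x,\xi)}$ in $\xi$ yields $\partial_\xi a(x,-\xi) = -\overline{\partial_\xi a(x,\xi)}$, and the extra $-i$ coming from $D_\xi$ precisely flips the sign back under conjugation, so $D_\xi a$ (and inductively any $D_\xi^\gamma a$) is hermitian whenever $a$ is.

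With these facts in hand, the base cases are immediate: $a_1 = -|\xi|/(1-t)$ and $a_{-1} = \lambda(1-t)\tau/(2|\xi|)$ are real and even in $\xi$ (using $\lambda\in\R$ and $\tau\in C^\infty(\D;\R)$), hence hermitian; $a_0 = 0$ is trivially hermitian. For the inductive step, suppose $a_j$ is hermitian for all $m \le j \le 1$ (for some $m \leq 0$). In the recurrence \eqref{eq:am}, every summand $\frac{1}{\gamma!} D_\xi^\gamma(a_j)\,\partial_x^\gamma(a_k)$ is a product of hermitian symbols by the closure properties above, and $\partial_t a_m$ and $a_m/(1-t)$ are likewise hermitian. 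The prefactor $-1/(2a_1) = (1-t)/(2|\xi|)$ is real and even in $\xi$, so its product with the bracket is hermitian, giving that $a_{m-1}$ is hermitian. The induction closes, and restricting to $t=0$ and dividing by $-\rho(x)$ yields the claim for every $r_m$.

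The proof is essentially bookkeeping; the only conceptual subtlety is the sign computation for $D_\xi$ noted above, which is what allows the hermitian property to be stable under the $\xi$-derivatives that appear in the Leibniz-type sum in \eqref{eq:am}.
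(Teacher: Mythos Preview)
Your proof is correct and follows essentially the same approach as the paper: both establish the closure of hermitian symbols under the operations $\partial_x$, $D_\xi$, sums and products (the content of Lemma~\ref{lem:herm}), and then conclude by induction on the recurrence \eqref{eq:am}. Your write-up is in fact more explicit than the paper's, which merely states that the proposition follows from \eqref{eq:am} and Lemma~\ref{lem:herm}; in particular you spell out the sign computation for $D_\xi$ and the role of the real prefactor $-1/(2a_1)$, both of which the paper leaves implicit.
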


The proposition follows from \eqref{eq:am} and the following lemma whose proof is straightforward.

\begin{lem}\label{lem:herm}
Let $a$ and $b$ be two hermitian symbols corresponding to operators $A$ and $B$. Then
\begin{enumerate}
\item $\del_x a$ and $D_\xi a$ are hermitian;
\item $a + b$ and $ab$ are hermitian;
\item The symbol of $AB$ is hermitian.
\end{enumerate}

\begin{proof}
  The first two claims are a trivial computation. The third claim follows from the
  fact that the symbol of $AB$ is obtained from $a$ and $b$ using the operations described by the first two claims.
\end{proof}

\end{lem}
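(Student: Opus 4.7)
The plan is to address the three claims in order, since (3) uses both (1) and (2). For (1), I will differentiate the defining identity $a(x,-\xi) = \overline{a(x,\xi)}$ on both sides. Since $\partial_x$ commutes with complex conjugation and does not affect $\xi$, the hermiticity of $\partial_x a$ is immediate. For $D_\xi a = -i\partial_\xi a$, differentiation in $\xi$ introduces a sign via the chain rule, so $(\partial_\xi a)(x,-\xi) = -\overline{(\partial_\xi a)(x,\xi)}$, i.e.\ $\partial_\xi a$ is anti-hermitian. Multiplying by the purely imaginary scalar $-i$ converts anti-hermitian into hermitian, since $(-i)\cdot(-\overline{z}) = i\overline{z} = \overline{-iz}$, which yields the claim for $D_\xi a$.

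Claim (2) I will dispose of by direct computation using additivity and multiplicativity of complex conjugation: $(a+b)(x,-\xi) = \overline{a(x,\xi)} + \overline{b(x,\xi)} = \overline{(a+b)(x,\xi)}$, and identically for the product.

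For (3), my approach is to invoke the standard asymptotic formula for the symbol of a composition of pseudodifferential operators,
\begin{equation*}
\sigma(AB)(x,\xi) \sim \sum_{\gamma \geq 0} \frac{1}{\gamma!}\, \partial_\xi^\gamma a(x,\xi)\, D_x^\gamma b(x,\xi).
\end{equation*}
Using $\partial_\xi = iD_\xi$ and $D_x = -i\partial_x$, together with the fact that $i^\gamma(-i)^\gamma = 1$, the above rewrites as $\sum_{\gamma \geq 0} \frac{1}{\gamma!}\,(D_\xi^\gamma a)(\partial_x^\gamma b)$. Iterating (1), both factors are hermitian for every $\gamma$; by (2), each product is hermitian; and multiplication by the real scalar $1/\gamma!$ preserves this. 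So every term of the expansion is hermitian. The main (mild) obstacle is ensuring this property survives the asymptotic summation: each finite partial sum is hermitian, and the hermitian condition is a pointwise identity that passes to the limit modulo smoothing operators, which is exactly the equivalence relation under which $\sigma(AB)$ is defined. Hence $\sigma(AB)$ is hermitian.
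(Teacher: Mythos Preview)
Your proof is correct and follows exactly the approach sketched in the paper: verify (1) and (2) by direct computation, then deduce (3) from the composition formula for symbols together with (1) and (2). You have simply supplied the details that the paper omits.
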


\section{Transformation of the symbol}\label{sec:trans}

In this section, we follow and make explicit the strategy laid out in
\cite{rozenblumalmostsim}, \cite[Section 2]{agranovichrus} and
\cite{egorovschultze} in the specific case of the parametric Dirichlet-to-Neumann map.

Specifically, we want to find a sequence $P_N \in \Psi^1$ such that
\begin{itemize}
 \item $\Lambda U_N = U_N P_N  \mod{\Psi^{1 - N}}$ for a bounded operator $U_N$;
 \item The symbol of $P_N$ depends only on the cotangent variable $\xi$ up to order $1 - N$.
\end{itemize}
Such a procedure (making the symbol dependent solely on $\xi$) will be referred
to as a \emph{diagonalisation} of the symbol. It is motivated by the following
proposition resulting from \cite[Theorem 9]{rozenblumalmostsim}.

\begin{prop}\label{propdiag}
  Let $A$ be an elliptic, self-adjoint pseudodifferential operator of order $1$
  and let $P$ be the operator with symbol
\begin{equation}
p(x,\xi) = \sum_{m=0}^N p_{1-m}(\xi)
\end{equation}
where $p_{1-m}$ depends only on $\xi$ and is positively homogeneous of order
$1-m$. Suppose that $AU - UP \in \Psi^{-N}$ for some bounded operator $U$. Then
the eigenvalues of $A$ are given by the union of two sequences, $\{\sigma_j^+\}$ and $\{\sigma_j^-\}$, that satisfy
\begin{equation}
\sigma_j^{\pm} = \sum_{m=0}^N p_{1-m}(\pm j) + \bigo{j^{-N}}.
\end{equation}
\end{prop}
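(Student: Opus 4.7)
The strategy rests on the observation that, since the symbol of $P$ depends only on $\xi$, the operator $P$ is a Fourier multiplier on $\S^1$. Writing $e_k(x) = (2\pi)^{-1/2} e^{ikx}$ for $k \in \Z \setminus \{0\}$, one has $Pe_k = p(k) e_k$ with $p(k) = \sum_{m=0}^{N} p_{1-m}(k)$. Declaring $\sigma_j^{\pm} := p(\pm j)$ for $j \ge 1$ already produces the two candidate sequences, so the content of the proposition is to show that the eigenvalues of $A$ accumulate to these values with the claimed polynomial remainder.

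To locate eigenvalues of $A$ near each $p(k)$, I would apply the intertwining identity to the Fourier basis: $(AU - UP) \in \Psi^{-N}$ together with $Pe_k = p(k) e_k$ yields $(A - p(k))Ue_k = Re_k$ with $R \in \Psi^{-N}$, hence $\|Re_k\|_{L^2} = \bigo{|k|^{-N}}$. Thus $Ue_k$ is a quasi-mode for $A$ at energy $p(k)$, and provided $\|Ue_k\|_{L^2}$ is uniformly bounded below, the self-adjoint spectral theorem produces an eigenvalue of $A$ within $\bigo{|k|^{-N}}$ of $p(k)$. For the converse direction I would exploit the fact that, in Rozenblum's diagonalization, $U$ is constructed as an elliptic pseudodifferential operator of order $0$ with principal symbol $1$, so that $U^*U - I$ is smoothing, $U$ is invertible modulo smoothing operators, and $\{Ue_k\}$ forms a Riesz basis of $L^2(\S^1)$. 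Transporting the true eigenfunctions of $A$ by $U^{-1}$ then gives quasi-modes of $P$ near every eigenvalue of $A$, yielding a two-sided matching of spectra up to finitely many indices.

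The main obstacle is the bookkeeping required to turn the existence of \emph{some} eigenvalue of $A$ near each $p(\pm k)$ into the precise ordered enumeration $\sigma_j^{\pm} = p(\pm j) + \bigo{j^{-N}}$. Weyl's law only provides equal leading growth rates for $A$ and $P$; the interleaving of the two branches $p(+j)$ and $p(-j)$ must be tracked carefully, since $A$ has a single monotone spectrum whereas $P$ naturally produces two parallel sequences. This is handled by the Riesz basis property of $\{Ue_k\}$ combined with a min--max argument on finite-dimensional subspaces spanned by the quasi-modes, which absorbs any residual matching ambiguity into the $\bigo{j^{-N}}$ remainder and identifies the $\pm$ branches via the sign of the Fourier mode that generated the quasi-mode.
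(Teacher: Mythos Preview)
The paper does not supply its own proof; it attributes the statement directly to \cite[Theorem~9]{rozenblumalmostsim}. Your quasi-mode argument --- turn the exact eigenfunctions $e_k$ of the Fourier multiplier $P$ into approximate eigenfunctions $Ue_k$ of $A$ via the intertwining relation, then use invertibility of $U$ modulo smoothing operators to run the matching in both directions --- is exactly the mechanism behind Rozenblum's almost-similarity framework, so you have essentially reconstructed the cited argument rather than offered an alternative.

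You are right that the bare hypothesis ``$U$ bounded'' cannot suffice (take $U=0$); Rozenblum's actual hypothesis is that $A$ and $P$ are \emph{almost similar}, meaning the intertwiner is invertible modulo $\Psi^{-\infty}$. One small inaccuracy in your sketch: in the paper's application the operator $U_N$ is not a pseudodifferential operator of order zero but the composite $\Phi K_1\cdots K_{N-1}$, where $\Phi$ is the Fourier integral operator of Proposition~\ref{propprincipalsymbol} associated to a diffeomorphism of $\S^1$ and the $K_i$ are the order-zero $\Psi$DOs of Lemma~\ref{diagsub}. This does not break your argument --- $\Phi$ is an elliptic FIO, hence invertible modulo smoothing, and $\{U_N e_k\}$ remains a Riesz basis --- but the specific claim ``$U^*U - I$ is smoothing'' is stronger than what actually holds; what you need, and what the FIO calculus gives you, is a parametrix for $U$, which is already enough for the two-sided eigenvalue matching and the min--max bookkeeping you describe.
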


\subsection{Diagonalisation of the principal symbol}
We start by diagonalising the principal symbol of $\Lambda = \frac{1}{\rho} \DN_\lambda(\D; \tau)$. Let
\begin{equation}
  \label{eq:defL}
 L = \frac{1}{2 \pi }\int_0^{2\pi} \rho(x) \de x
\end{equation} 
and
\begin{equation}
 S(x,\eta) = \frac{\eta}{L} \int_0^x \rho(t) \de t.
\end{equation}
The function $S$ is a generating function for the canonical transformation $(y, \xi) = T(x,\eta)$ given by the relations
\begin{equation}
  \xi = \frac{\del S}{\del x}, \qquad y = \frac{\del S}{\del \eta}.
\end{equation}
We define the Fourier integral operator $\Phi$ with phase function $S$ as
\begin{equation}
\Phi u (x) = \int_\R e^{i S(x,\xi)} \hat u(\xi) \de \xi,
\end{equation} 
where $\hat u$ is the Fourier transform of $u$. We use $\Phi$ to diagonalise the
principal symbol of $\Lambda $ in the following proposition.
\begin{prop} \label{propprincipalsymbol}
For any $N$, there is an operator $B_N \in \Psi^1$ such that its principal symbol depends only on $\xi$ and such that
\begin{equation}
\Lambda \Phi - \Phi B_N \in \Psi^{1 - N}.
\end{equation}
\end{prop}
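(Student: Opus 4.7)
My plan is to exploit the special structure of the phase: $S(x,\xi) = \xi\,\psi(x)$, with $\psi(x) := L^{-1}\int_0^x \rho(t)\,\de t$, is linear in $\xi$. By Fourier inversion on $\S^1$, the operator $\Phi$ is then exactly the pullback $u \mapsto u \circ \psi$. The normalisation \eqref{eq:defL} ensures $\psi(2\pi) = 2\pi$; together with $\rho > 0$, this makes $\psi$ a bona fide diffeomorphism of $\S^1$.

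I would then define $B := \Phi^{-1}\Lambda\Phi$, which lies in $\Psi^1(\S^1)$ by the invariance of the pseudodifferential class under diffeomorphisms of the base. Its principal symbol at a cotangent point $(y, \eta)$ is computed by the standard transformation rule: pulling back via the cotangent lift of $\psi$, set $x = \psi^{-1}(y)$ and note that the cotangent variable transforms to $\xi = \eta\,\psi'(x) = \eta\,\rho(x)/L$. Evaluating the principal symbol of $\Lambda$ at $(x,\xi)$ then yields
\begin{equation*}
\sigma_1(B)(y, \eta) = \frac{|\xi|}{\rho(x)} = \frac{|\eta|}{L},
\end{equation*}
which depends only on the cotangent variable, as required. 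Choosing $B_N := B$ for every $N$, we get $\Lambda\Phi - \Phi B_N = 0$, which lies trivially in $\Psi^{1-N}$.

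The main subtlety in this direct argument is the precise identification of $\Phi$ with the pullback $\psi^*$, which rests on Fourier inversion together with the $2\pi$-periodicity of $\psi$; any Fourier normalisation constants cancel in the difference $\Lambda\Phi - \Phi B$ and so do not affect the conclusion. Alternatively, to match the more symbolic, iterative style used in the subsequent section, one can construct $B_N$ as an asymptotic sum $B^{(1)} + B^{(0)} + \cdots$: choose $B^{(1)}$ with symbol $|\xi|/L$ so as to cancel the principal symbol of $\Lambda\Phi$ via Egorov's theorem for the canonical transformation associated with $S$, and determine the lower-order terms $B^{(1-k)}$ recursively by solving transport-type equations that drop the order of the remainder $\Lambda\Phi - \Phi(B^{(1)} + \dots + B^{(1-k)})$ by one at each step. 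Either route yields the desired $B_N$; the direct one is cleaner, while the iterative one better motivates the full diagonalisation to be carried out in Section \ref{sec:asymp}.
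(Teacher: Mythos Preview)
Your direct argument is correct: since $S(x,\xi) = \xi\,\psi(x)$ is linear in $\xi$, Fourier inversion shows that $\Phi$ is exactly the pullback $\psi^*$ by the circle diffeomorphism $\psi$, so $B := \Phi^{-1}\Lambda\Phi$ is a genuine pseudodifferential operator of order $1$ with principal symbol $|\eta|/L$, and taking $B_N = B$ makes $\Lambda\Phi - \Phi B_N$ vanish identically. This is a cleaner proof of the proposition \emph{as stated} than the paper's.

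The paper, however, takes a different and more laborious route: it treats $\Lambda\Phi$ and $\Phi B$ as Fourier integral operators with phase $S$, expands each amplitude via stationary phase and \cite[Lemma~2.13]{egorovschultze}, and then matches homogeneous components term by term. This yields not only the principal symbol but an explicit formula for every lower-order term, namely $b_m(x,\xi) = \tilde a_m(s(x),\xi)$ with $\tilde a_m$ given by \eqref{eq:atilde}, together with the concrete expressions for $\tilde a_{-1}$ and $\tilde a_{-2}$. These formulas are not incidental: they feed directly into the recursion of Lemma~\ref{diagsub}, into the degree count in Lemma~\ref{lem:deglambda}, into the hermiticity argument in Theorem~\ref{thm:rhotau}, and ultimately into the explicit coefficients in \eqref{eq:p3} and \eqref{eq:particularparam}. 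Your alternative sketch via Egorov's theorem gestures at this, but does not actually produce the change-of-variables formula $b_m(x,\xi) = \tilde a_m(s(x),\xi)$ that the rest of Section~\ref{sec:trans} relies on. So: your argument suffices for the proposition, and is worth recording as a conceptual shortcut, but if you intend to continue to the explicit invariants you will still need to carry out the symbol computation the paper performs.
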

\begin{proof}
We are looking for the symbol of $B$ in the form
\begin{equation}
 b(x,\xi) = b_1(\xi) + \sum_{m \le 0} b_m(x,\xi)
\end{equation}
with $b_j(x,\xi)$ positively homogeneous of order $j$ in $\xi$. Let us first study the operator $\Lambda \Phi$. It acts on smooth functions as
\begin{equation}
\begin{aligned}
  \Lambda \Phi u(x) &= \frac{1}{2\pi}\iiint r(x,\eta) e^{i(x - y) \eta} e^{i S(y,\xi)} \hat u(\xi) \de \eta \de y \de \xi \\
  &= \frac{1}{2\pi}\int k(x,\xi) \ e^{i S(x,\xi)} \hat u (\xi) \de \xi,
 \end{aligned}
\end{equation}
where
\begin{equation}
 k(x,\xi) = \iint r(x,\eta) e^{i(x - y)\eta} e^{i (S(y,\xi) - S(x,\xi))} \de y \de \eta.
\end{equation}
We now look for the asymptotic expansion of $k$ as a symbol on $\S^1$, up to
symbols of order $-\infty$. Note that the expressions here have sense in terms
of distributions, see \cite[Section 2.2.2]{egorovschultze}. By following the method of proof in
\cite[Theorem 6.5]{egorovschultze},
we can localise the symbol by finding smooth cut-off functions $h_1(x,y)$ and
$h_2(\xi, \eta)$ supported in suitable neighbourhoods of $x=y$ and $\xi = \eta$
such that if
\begin{equation}
k'(x,\xi) = \iint r(x,\eta) e^{i(x-y)\eta} e^{i(S(y,\xi) - S(x,\eta))} h_1(x, y) h_2(\xi, \eta) \de y \de \eta,
\end{equation}
then $\Op(k-k') \in \Psi^{-\infty}$. By Taylor's theorem, we can write
\begin{equation}
S(y,\xi) - S(x,\xi) = \pardiff{S(x,\xi)}{x} (y-x) + R(x,y,\xi)(y-x)^2
\end{equation}
with
\begin{equation}\label{eq:R}
R(x,y,\xi) = \int_0^1 (1-t) \pardiff{^2 S(x + t(y-x),\xi)}{x^2} \de t.
\end{equation}
We can rewrite $k'$ as
\begin{equation}
k'(x,\xi) = \iint r(x,\eta) e^{i(x - y)(\eta - R(x,y,\xi)(y-x) - \pardiff{S}{x})} h_1(x,y) h_2(\xi,\eta) \de y \de \eta.
\end{equation}
Changing variables as $\tilde{\eta} = \eta - R(x,y,\xi)(y-x)$ and $\tilde{\xi} = \pardiff{S(x,\xi)}{x} = \frac{\xi \rho}{L}$, we obtain that $k'$ is of the form
\begin{equation}
k'(x,\xi) = \iint K(x,y,\tilde{\xi},\tilde{\eta}) e^{i(x-y)(\tilde{\eta} - \tilde{\xi})} \de y \de \tilde{\eta}
\end{equation}
where
\begin{equation}
  K(x,y,\tilde{\xi},\tilde{\eta}) = r\left(x, \tilde{\eta} +
  R(x,y,\xi)(y-x)\right) h_1(x,y) h_2(\xi, \tilde{\eta} + R(x,y,\xi)(y-x)).
\end{equation}
From \cite[Lemma 2.13]{egorovschultze}, we know that $k'(x,\xi)$ is a symbol given by
\begin{equation}
k'(x,\xi) = \sum_{\alpha \geq 0} \frac{1}{\alpha !} \del_{\tilde{\eta}}^\alpha
D_y^\alpha K(x,y,\tilde{\xi},\tilde{\eta}) \bigg|_{\substack{\tilde{\eta} =
\tilde{\xi} \\ y = x}}.
\end{equation}
By the choice of cut-off functions, when $x$ is close to $y$ and $\tilde{\eta}$
is close to $\tilde{\xi}$, we have that $h_1$ and $h_2$ are constant and equal
to one. Hence, they don't intervene in the symbol's calculation and
\begin{equation}
k'(x,\xi) = \sum_{\alpha \geq 0} \frac{1}{\alpha!} \del_{\tilde{\eta}}^\alpha
D_y^\alpha r(x,\tilde{\eta} + R(x,y,\xi)(y-x))\bigg|_{\substack{\tilde{\eta} =
\tilde{\xi} \\ y = x}}.
\end{equation}

We now make the following observation : if $r(x,\tilde \eta + R(x,y,\xi)(y-x))$
is a symbol of order $m$, then applying $\del_{\tilde \eta}^\alpha D_y^\alpha$
results in a symbol of order $m - \alpha$. In fact, for $\alpha = 1$, and
denoting by $\del_2$ the derivative with respect to the second argument, we have
\begin{equation}
\begin{aligned}
\del_{\tilde \eta} D_y r(x,\tilde \eta +
R(x,y,\xi)(y-x))\bigg|_{\substack{\tilde \eta = \tilde \xi \\ y = x}} &= - i
\left[\del_2^2 r(x,\tilde \xi)\right] R(x,x,\xi) \\
&= - i \left[\del_2^2 r\left(x,\frac{\rho(x) \xi}{L}\right)\right] \frac{\xi \rho'(x)}{2L}.
\end{aligned}
 \end{equation}
It is clear from this last equation that it is a symbol of order $m - 1$.
Induction on $\alpha$ is then straightforward. This yields the asymptotic
symbolic expansion $k'(x,\xi) = \sum_{m \leq 1} \tilde{a}_m(x,\xi)$ where
\begin{equation} \label{eq:atilde}
 \tilde a_m(x,\xi) = \sum_{0 \le \alpha \le 1-m} \frac{1}{\alpha!} \del_{\tilde
 \eta}^\alpha D_y^\alpha r_{m + \alpha}(x,\tilde \eta + R(x,y,\xi)(y-x)) \bigg|_{\substack{\tilde \eta = \frac{\xi \rho(x)}{L} \\ y = x}}.
\end{equation}
We can compute the first few terms of the symbolic expansion, using the fact
that in $\R \setminus\set 0$ the second derivative of $a_1$ in the second
variable vanishes identically. This gives
\begin{align*}
 \tilde a_1(x,\xi) &= \frac{\abs\xi}{L};\\
 \tilde a_0(x,\xi) &= 0 ;\\
 \tilde a_{-1}(x,\xi) &= -\frac{\lambda L \tau}{2 \rho^2 \abs \xi};\\
 \tilde a_{-2}(x,\xi) &= \frac{\lambda L^2}{4 \xi^2 \rho^3}\left(\tau_r - i\sgn(\xi) \tau_x + 2\tau\right) + \frac{i \lambda L^2 \tau \sgn(\xi) \rho'}{2\xi^2 \rho^4}.
\end{align*}

Let us now compute the symbol of $\Phi B$. We have
\begin{equation}
\begin{aligned}
  \Phi B u(x) &= \frac{1}{2\pi}\iiint e^{i y(\xi - \eta)} e^{i S(x,\eta)} b(y,\xi) \hat u(\xi) \de y \de \eta \de \xi \\
  &= \frac{1}{2\pi}\int f(x,\xi) e^{i S(x,\xi)} \hat u (\xi) \de \xi,
 \end{aligned}
\end{equation}
where
\begin{equation}
 f(x,\xi) = \iint b(y,\xi) e^{i (S(x,\eta) - S(x,\xi))} e^{i y(\xi - \eta)} \de y \de \eta.
\end{equation}
As above, this integral only converges in the sense of distributions. As in
\cite{egorovschultze}, we can find a smooth cut-off function $h(\xi,\eta)$
supported in a neighbourhood of $\xi = \eta$ such that the symbol
\begin{equation}
f'(x,\xi) = \iint b(y,\xi) e^{i (S(x,\eta) - S(x,\xi))} e^{i y(\xi - \eta)} h(\xi,\eta) \de y \de \eta
\end{equation}
satisfies $\Op(f - f') \in \Psi^{-\infty}$.

Let us observe that
\begin{equation}
 S(x,\eta) - S(x,\xi) = \frac{(\eta - \xi)}{L} \int_0^x \rho(x) \de x  = (\eta -
 \xi) F(x)
\end{equation}
and that $F(x) = \frac{\del S}{\del \xi}(x,\xi)$. After the change of variable
$\tilde y = y + x - F(x)$, the equation for $f'$ becomes
\begin{equation}
\begin{aligned}
 f'(x,\xi) &= \iint b(\tilde y - x + F(x), \xi) h(\xi,\eta) e^{i (x - \tilde
 y)(\eta - \xi)} \de \tilde y \de \eta \\
 &= \iint Q(x,\tilde y, \eta, \xi) e^{i (x - \tilde y)(\eta - \xi)} \de \tilde y
 \de \eta.
 \end{aligned}
\end{equation}
Once again from \cite[Lemma 2.13]{egorovschultze}, we have that $f'$ is a symbol in $S^1$ and
\begin{equation}
f'(x,\xi) = \sum_{\alpha \ge 0} \frac{1}{\alpha!} \del_{\eta}^\alpha D_{\tilde
y}^\alpha Q(x,\tilde y, \xi,\eta) \bigg|_{\substack{\tilde y = x \\ \eta =
\xi}}.
\end{equation}
Since $Q$ is constant in $\eta$ close to $\xi$, the derivatives in $\eta$ always vanish.
Hence, the symbol of $B_N$ is given by
\begin{equation}
f'_N(x,\xi) = \sum_{-N \le m \le 1} b_m\left(\frac{1}{L}\int_0^x \rho(t) \de t,\xi \right).
\end{equation}
To have the terms of the same order of homogeneity cancel out, we need to choose
\begin{equation}\label{eq:bm}
 b_m(x,\xi) = \tilde a_m(s(x),\xi),
\end{equation}
where $s(x)$ is the number $s$ such that
\begin{equation}
x = \frac{1}{L} \int_0^s \rho(t) \de t.
\end{equation}
This concludes the proof.
\end{proof}

\subsection{Diagonalisation of the full symbol} \label{subsecdiagsub}
Let us denote by $P_1$ the operator with symbol
\begin{equation}
  \label{eq:p(1)}
p^{(1)}(x,\xi) = b_1(\xi) + \sum_{m \leq -1} b_m(x,\xi).
\end{equation}
The diagonalisation of the full symbol is based on the following lemma inspired
by the methods laid out by Rozenblum \cite{rozenblumalmostsim} and Agranovich
\cite{agranovichrus}. We include it for completeness.
\begin{lem}\label{diagsub}
Let $N \geq 0$ and suppose that there exists a bounded operator $U_N$ such that
$\Lambda U_N - U_N P_N \in \Psi^{-\infty}$ where $P_N$ is a pseudodifferential
operator whose symbol is given by
\begin{equation}
p^{(N)}(x,\xi) = \sum_{m=0}^N p^{(N)}_{1-m}(\xi) + p^{(N)}_{-N}(x,\xi) + \bigo{\abs{\xi}^{-(N+1)}}.
\end{equation}
Then if
\begin{equation}\label{eq:pNtilde}
p^{(N+1)}_{-N}(\xi) = \frac{1}{2\pi} \int_0^{2\pi} p^{(N)}_{-N}(x,\xi) \de x
\end{equation}
and $K$ is the pseudodifferential operator with symbol
\begin{equation}\label{eq:k}
k(x,\xi) = 1 - iL\sgn \xi \int_0^x p^{(N)}_{-N}(t,\xi) - p^{(N+1)}_{-N}(\xi) \de t,
\end{equation}
there exists an operator $P_{N+1}$ with symbol
\begin{equation}
p^{(N+1)}(x,\xi) = \sum_{m=0}^N p^{(N)}_{1-m}(\xi) + p^{(N+1)}_{-N}(\xi) + \bigo{\abs{\xi}^{-(N+1)}}
\end{equation}
satisfying $\Lambda (U_N K) - (U_N K) P_{N+1} \in \Psi^{-\infty}$.
\end{lem}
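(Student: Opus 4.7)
The plan is to define $P_{N+1}$ by conjugating $P_N$ by $K$ modulo smoothing operators, and to verify symbolically that this conjugation has precisely the prescribed effect through order $-N$. Since the principal symbol of $K$ is $1$, the operator $K$ is elliptic of order $0$ and admits a parametrix $\tilde K \in \Psi^0$ satisfying $K\tilde K \equiv \tilde K K \equiv I \mod{\Psi^{-\infty}}$. Setting $P_{N+1} := \tilde K P_N K$, composing the hypothesis $\Lambda U_N \equiv U_N P_N$ on the right by $K$ and inserting $K \tilde K$ after $U_N$ give
\begin{equation}
\Lambda (U_N K) \equiv U_N P_N K \equiv (U_N K)(\tilde K P_N K) = (U_N K) P_{N+1} \mod{\Psi^{-\infty}},
\end{equation}
which is the desired intertwining.

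It then remains to identify the first $N+1$ terms of the symbol of $P_{N+1}$, or equivalently to compare the symbols of $P_N K$ and $K P_{N+1}$ order by order using the standard composition expansion in which the symbol of $AB$ is asymptotic to $\sum_{\alpha \geq 0} \frac{1}{\alpha!} \del_\xi^\alpha a \cdot D_x^\alpha b$. Writing $k = 1 + q$ with $q$ of order $-N$, the constant piece $1$ in $k$ produces $p^{(N)}$ and $p^{(N+1)}$ on each side, and these agree at orders $1 - m$ for $0 \leq m < N$ by the announced form of $p^{(N+1)}$. At order $1-N$, the only extra term on the $P_N K$ side is $p_1(\xi) q_{-N}(x,\xi)$, which is matched on the $K P_{N+1}$ side by $q_{-N}(x,\xi) p^{(N+1)}_1(\xi) = p_1(\xi) q_{-N}(x,\xi)$.

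The critical and only nontrivial matching is at order $-N$. On the $P_N K$ side, only the $\alpha = 0$ contribution $p^{(N)}_{-N}(x,\xi)$ and the $\alpha = 1$ contribution $\del_\xi p_1(\xi) \cdot D_x q_{-N}(x,\xi)$ reach this order, all other compositional terms falling strictly below it, which uses crucially that $p^{(N)}_0 = 0$ and that $q$ is of order $-N$; on the $K P_{N+1}$ side, only $p^{(N+1)}_{-N}(\xi)$ appears, since $p^{(N+1)}$ is $x$-independent at orders above $-N$ and the term $q_{-N} p^{(N+1)}_1$ sits at order $1-N$. Differentiating the prescribed form \eqref{eq:k} of $k$ yields
\begin{equation}
D_x q_{-N}(x,\xi) = -L\sgn(\xi)\bigl(p^{(N)}_{-N}(x,\xi) - p^{(N+1)}_{-N}(\xi)\bigr),
\end{equation}
so that with $\del_\xi p_1(\xi) = \sgn(\xi)/L$ the $\alpha = 1$ contribution equals $p^{(N+1)}_{-N}(\xi) - p^{(N)}_{-N}(x,\xi)$, and the $x$-dependent pieces cancel leaving exactly $p^{(N+1)}_{-N}(\xi)$. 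The main delicate point is this order $-N$ bookkeeping, together with the observation that the prescription \eqref{eq:pNtilde} of $p^{(N+1)}_{-N}$ as the mean of $p^{(N)}_{-N}(\cdot,\xi)$ enters precisely to force the integrand in \eqref{eq:k} to have zero mean on $[0,2\pi]$, ensuring that $q_{-N}(x,\xi)$ is a well-defined $2\pi$-periodic function of $x$ and $K$ a bona fide pseudodifferential operator on $\S^1$.
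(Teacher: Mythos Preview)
Your proof is correct and follows essentially the same symbolic matching as the paper: both arguments reduce to the identity $P_N K \equiv K P_{N+1} \bmod \Psi^{-\infty}$ and check it at order $-N$ using $\partial_\xi p_1 = \sgn(\xi)/L$ together with the prescribed $k$, with the averaging condition \eqref{eq:pNtilde} forcing periodicity of the integrand.

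The only noteworthy difference is organisational. You \emph{define} $P_{N+1} := \tilde K P_N K$ via a parametrix of $K$, so that the intertwining $P_N K \equiv K P_{N+1}$ is automatic and you need only verify the shape of the symbol through order $-N$. The paper instead \emph{constructs} $P_{N+1}$ order by order, deriving along the way the explicit recursion
\[
p^{(N+1)}_m = p^{(N)}_m + \sum_{\alpha = 0}^{1-m-N} \frac{1}{\alpha !} \left[(\del_x^\alpha k_{-N})(D_{\xi}^{\alpha} p^{(N)}_{m + \alpha + N}) - (\del_x^\alpha p^{(N+1)}_{m + \alpha + N})(D_\xi^\alpha k_{-N})\right]
\]
for $m \le -N-1$. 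Your route is cleaner for the lemma as stated; the paper's route yields this formula, which it uses later (Proposition \ref{prop:propertiespN}, Lemmas \ref{lem:deglambda} and \ref{lem:simple}) to control degrees in $\lambda$ and to simplify explicit computations. One minor caveat in both arguments: the step ``$U_N \cdot (\text{smoothing})$ is smoothing'' uses more than boundedness of $U_N$; it is justified in context because $U_N$ is a composition of a Fourier integral operator with $\Psi^0$ operators.
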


\begin{proof}
Starting off with the pseudodifferential operator $P_N$, we would like to find a
bounded operator $K$ and a pseudodifferential operator $P_{N+1}$ whose symbol
$p^{(N+1)}$ satisfies
\begin{equation}
p^{(N+1)}(x,\xi) = \sum_{m=0}^N p_{1-m}^{(N)}(\xi) + p^{(N+1)}_{-N}(\xi) +
\bigo{\abs{\xi}^{-(N+1)}}
\end{equation}
such that $P_N K - K P_{N+1} \in \Psi^{-\infty}$. We choose $K$ to have symbol
$1 + k_{-N}(x,\xi)$ with $k_{-N}$ positively homogeneous of order $-N$ in $\xi$.
The symbol of $P_N K - K P_{N+1}$ is then given by
\begin{equation}
p_{-N}^{(N)}(x,\xi) - p_{-N}^{(N+1)}(\xi) - i(\del_\xi p_1^{(N)})(\del_x k_{-N}) + O(\abs{\xi}^{-N-1}).
\end{equation}
The symbol $p_1^{(N)}$ comes from the diagonalisation of the principal symbol
and is given by $p_1^{(N)}(\xi) = p_1^{(1)}(\xi) = b_1(\xi) =
\frac{\abs{\xi}}{L}$. Hence, we see that the terms of order $-N$ cancel if the
symbol of $K$ is given by \eqref{eq:k} and since $0 = k_{-N}(0,\xi) =
k_{-N}(2\pi,\xi)$, we must take $p_{-N}^{(N+1)}$ as in \eqref{eq:pNtilde}. In
order to get that $P_N K - K P_{N+1} \in \Psi^{-\infty}$ knowing that the symbol
of $P$ is given by
\begin{equation}
p^{(N)}(x,\xi) = \sum_{m=0}^N p^{(N)}_{1-m}(\xi) + \sum_{m \geq N+1}
p^{(N)}_{1-m}(x,\xi),
\end{equation}
we need to take $P_{N+1}$ with symbol
\begin{equation}
p^{(N+1)}(x,\xi) = \sum_{m=0}^N p^{(N+1)}_{1-m}(\xi) + p^{(N+1)}_{-N}(\xi) +
\sum_{m \geq N+2} p^{(N+1)}_{1-m}(x,\xi),
\end{equation}
which is calculated inductively as
\begin{equation}\label{eq:pmtilde}
p^{(N+1)}_m = p^{(N)}_m + \sum_{\alpha = 0}^{1-m-N} \frac{1}{\alpha !} \left[(\del_x^\alpha k_{-N})(D_{\xi}^{\alpha} p^{(N)}_{m + \alpha + N}) - (\del_x^\alpha p^{(N+1)}_{m + \alpha + N})(D_\xi^\alpha k_{-N})\right]
\end{equation}
for $m \leq -N-1$. It follows that $\Lambda(U_N K) - (U_N K) P_{N+1}$ is smoothing.
\end{proof}

The previous lemma gives us a family of operators $P_N$ that diagonalise
$\Lambda$ down to any desired order. By applying it $N-1$ times starting from
$P_1$, we get that there exists $P_N$ with symbol
\begin{equation}
p^{(N)}(x,\xi) = \frac{\abs{\xi}}{L} + \sum_{m=1}^{N-1} \frac{1}{2\pi}
\int_0^{2\pi} p_{-m}^{(m)}(x,\xi) \de x + \bigo{\abs{\xi}^{-N}}
\end{equation}
such that $\Lambda U_N - U_N P_N$ is smoothing for some bounded operator $U_N$.
We summarise the properties of the operators $P_N$ that were proved along the
discussion above in the following
proposition.

\begin{prop}\label{prop:propertiespN}
The symbols $p^{(N)}$ of $P_N$ possess the following properties.
\begin{enumerate}
  \item  The first symbol $p^{(1)} = b_1(\xi) + \sum_{m \leq -1} b_m(x,\xi)$,
    see \eqref{eq:p(1)}.
\item  For $m \geq 1-N$, $p_m^{(N+1)} = p_m^{(N)}$ and $\del_x p_m^{(N)} = 0$.
  In other words, for every $m \le -1$ the sequence stabilises and eventually becomes diagonal with
  respect to $\xi$.
\item  For $m \leq -N-1$, $p_m^{(N+1)}$ is given recursively by equation \eqref{eq:pmtilde}.
\item  When the sequence stabilises, the diagonalised symbol can be explicitly
  computed as $p_{-N}^{(N+1)}(\xi) = \frac{1}{2\pi} \int_0^{2\pi} p_{-N}^{(N)} (x,\xi) \de x$.
\end{enumerate}
\end{prop}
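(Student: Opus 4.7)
The plan is to prove this proposition by a simple induction on $N$, using Proposition \ref{propprincipalsymbol} as the base case and Lemma \ref{diagsub} as the inductive step. Indeed, the proposition is really just a clean summary of properties that have already been established in the preceding material, and no new analytic input is required.

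For the base case, Proposition \ref{propprincipalsymbol} directly provides the operator $P_1$ with symbol $p^{(1)}(x,\xi) = b_1(\xi) + \sum_{m\le -1} b_m(x,\xi)$, establishing property (1). Note that $P_1$ satisfies the hypothesis of Lemma \ref{diagsub} at level $N=1$: the principal symbol $b_1(\xi) = \abs{\xi}/L$ is already diagonal, and the remaining part $\sum_{m\le -1} b_m(x,\xi)$ matches the required form $p_{-1}^{(1)}(x,\xi) + \bigo{\abs{\xi}^{-2}}$.

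For the inductive step, assume $P_N$ has been constructed with symbol
\begin{equation}
p^{(N)}(x,\xi) = \sum_{m=0}^{N} p_{1-m}^{(N)}(\xi) + p_{-N}^{(N)}(x,\xi) + \bigo{\abs{\xi}^{-(N+1)}},
\end{equation}
together with a bounded $U_N$ such that $\Lambda U_N - U_N P_N \in \Psi^{-\infty}$. Apply Lemma \ref{diagsub} to produce $P_{N+1}$ with bounded operator $U_{N+1} := U_N K$. Then property (4) is exactly formula \eqref{eq:pNtilde}, and property (3) is exactly formula \eqref{eq:pmtilde}. Property (2) follows by inspection of the construction in Lemma \ref{diagsub}: the operator $K$ has symbol $1 + k_{-N}(x,\xi)$, so by the standard symbolic calculus its composition with $P_N$ only modifies the coefficients of order $\le -N$. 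Consequently, $p_m^{(N+1)} = p_m^{(N)}$ for every $m \ge 1-N$, and these coefficients are $x$-independent by the inductive hypothesis (they were already diagonalised at earlier stages). This also verifies that $P_{N+1}$ again meets the hypothesis of Lemma \ref{diagsub} at level $N+1$, closing the induction.

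There is no substantial obstacle here: all analytic difficulty was absorbed into the construction of $\Phi$ in Proposition \ref{propprincipalsymbol} and the symbolic computation carried out in Lemma \ref{diagsub}. The only bookkeeping care required is to check that at each step the error term $\bigo{\abs{\xi}^{-(N+2)}}$ produced by Lemma \ref{diagsub} is consistent with its hypothesis at the next stage; this is immediate from the formula for $p^{(N+1)}$ displayed in that lemma. Thus the proposition follows directly by iteration.
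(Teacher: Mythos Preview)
Your proposal is correct and matches the paper's approach exactly. In fact, the paper does not give a separate proof of this proposition at all: it states explicitly that the proposition ``summarise[s] the properties of the operators $P_N$ that were proved along the discussion above,'' referring to the construction of $P_1$ in \eqref{eq:p(1)} and the iterative application of Lemma~\ref{diagsub}. Your write-up simply makes this implicit induction explicit, which is entirely appropriate.
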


One can see that $p_m^{(N)}$ is a polynomial in $\lambda$ with coefficients that
are functions of $x$ and $\xi$. From this point of view, we observe the following.

\begin{lem}\label{lem:deglambda}
For each $m \leq -1$ and for each $N \geq 1$, the function $p_m^{(N)}$ is a polynomial in $\lambda$ of degree at most $-m$ whose constant coefficient vanishes.
\end{lem}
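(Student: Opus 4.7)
The plan is to treat the two assertions separately. The vanishing of the constant coefficient in $\lambda$ is essentially free: specialising to $\lambda = 0$ makes $\Lambda$ equal to the pure Steklov operator $\frac{1}{\rho}\DN_0(\D)$, and the recursion \eqref{eq:am} then shows by induction on $-m$ that $a_m \equiv 0$ for every $m \leq -1$, because the only source of sub-leading terms in equation \eqref{symbolLUgen} is the $\lambda\tau$ contribution at homogeneity zero. Consequently $r_m \equiv 0$, $\tilde a_m \equiv 0$, $b_m \equiv 0$ for such $m$, and a further induction on $N$ using \eqref{eq:k} and \eqref{eq:pmtilde} propagates the vanishing to $p_m^{(N)}\big|_{\lambda = 0}$ for all $m \leq -1$.

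For the degree bound $\deg_\lambda p_m^{(N)} \leq -m$ I proceed by nested induction. At $N = 1$ I first establish, by induction on $-m$, that $\deg_\lambda a_m \leq -m$ for $m \leq -1$. The base cases are immediate: $a_1 = -|\xi|/(1-t)$ and $a_0 = 0$ are $\lambda$-free, and $a_{-1}$ is linear in $\lambda$ via the degree-zero contribution of \eqref{symbolLUgen}. For the inductive step, the recursion \eqref{eq:am} writes $a_{m-1}$ as a polynomial combination of $a_j, a_k$ with $m \leq j, k \leq 1$; using the induction hypothesis together with $a_0 \equiv 0$ and the $\lambda$-independence of $a_1$, a direct count of $\lambda$-degrees in the products $\del_\xi^K a_j \cdot D_x^K a_k$ yields $\deg_\lambda a_{m-1} \leq -(m-1)$. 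Transporting this bound through \eqref{eq:atilde} and the substitution $b_m(x,\xi) = \tilde a_m(s(x),\xi)$ gives $\deg_\lambda p_m^{(1)} = \deg_\lambda b_m \leq -m$.

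For the inductive step in $N$, Proposition \ref{prop:propertiespN} already takes care of $m \geq 1-N$, and for $m = -N$ the averaging \eqref{eq:pNtilde} preserves the $\lambda$-degree, so $\deg_\lambda p_{-N}^{(N+1)} \leq N$. For $m \leq -N-1$ I apply the recursion \eqref{eq:pmtilde} with $\deg_\lambda k_{-N} \leq N$ inherited from $p_{-N}^{(N)}$ via \eqref{eq:k}. In each summand $(\del_x^\alpha k_{-N})(D_\xi^\alpha p^{(N)}_{m+\alpha+N})$ I split on the inner index $m+\alpha+N$: when it is $\leq -1$, the $\lambda$-degree is bounded by $N + (-(m+\alpha+N)) = -m-\alpha \leq -m$; when it equals $0$ the factor $p^{(N)}_0 \equiv 0$ makes the term vanish; when it equals $1$, the factor is $p^{(N)}_1(\xi) = |\xi|/L$ whose second and higher $\xi$-derivatives vanish, and one checks that $\alpha = 1-m-N \geq 2$ precisely when $m \leq -N-1$. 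The symmetric summand involving $p^{(N+1)}_{m+\alpha+N}$ is handled identically.

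The main obstacle is the bookkeeping in the last paragraph: several contributions with different $\lambda$-degrees combine and one must verify case by case that none exceeds $-m$. The structural facts that make the argument close are $a_0 \equiv 0$, the $\lambda$-independence of $a_1$, the entry of $\lambda\tau$ at only one homogeneity level of \eqref{symbolLUgen}, and the vanishing of $D_\xi^\alpha |\xi|/L$ for $\alpha \geq 2$.
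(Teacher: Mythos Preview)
Your proof is correct and follows essentially the same route as the paper's: a nested induction, first establishing $\deg_\lambda a_m \le -m$ (the paper in fact notes the sharper bound $\deg_\lambda a_m \le \lceil -m/2\rceil$, but yours suffices), then pushing this through \eqref{eq:atilde} and \eqref{eq:bm} to get the $N=1$ case, and finally inducting on $N$ via \eqref{eq:pmtilde}. One small point of presentation: when you say the summand involving $p^{(N+1)}_{m+\alpha+N}$ is ``handled identically,'' be aware that this appeals to an \emph{inner} induction on $-m$ (since $m+\alpha+N > m$), not to the outer hypothesis on $N$; the paper makes this nested induction explicit, and you have implicitly set it up by treating $m \ge 1-N$, then $m=-N$, then $m \le -N-1$ in order, so the argument closes. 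Your case analysis on $m+\alpha+N \in \{1,0,\le -1\}$ is actually more detailed than the paper's (which simply asserts the maximum occurs at $\alpha=0$), and your treatment of the vanishing constant term by specialising to $\lambda=0$ is equivalent to the paper's direct inspection of \eqref{eq:am}.
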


\begin{proof}
We denote by $\deg(p)$ the degree of a function $p(x,\xi)$ as a polynomial in $\lambda$. We proceed by induction on both $N$ and $m$.

It is easily seen from \eqref{eq:am} and the expressions for $a_{1}$ and $a_{-1}$ that the functions $a_m$ (and hence $r_m$) are polynomials of order $\ceil{\frac{-m}{2}} \leq -m$ whenever $m \leq -1$. It then follows from equations \eqref{eq:atilde} and \eqref{eq:bm} that $\deg(p_m^{(1)}) = \deg(b_m) \leq -m$ for all $m \leq -1$.

Let $N \geq 1$ be arbitrary and suppose that $\deg(p_m^{(N')}) \leq -m$ for all $1 \leq N' \leq N$ and $m \leq -1$. From Proposition \ref{prop:propertiespN}, we know that
\begin{equation}
\deg(p_{-1}^{(N+1)}) = \deg(p_{-1}^{(1)}) = 1.
\end{equation}
Let $m_0 \leq -1$ and suppose that $\deg(p_{m}^{(N+1)}) \leq -m$ for all $-1 \geq m \geq m_0$. We want to estimate the degree of $p_{m_0-1}^{(N+1)}$. Its expression is given by \eqref{eq:pmtilde} and we can see that the term of highest degree in $\lambda$ in the sum is obtained when $\alpha = 0$. Hence,
\begin{equation}\label{eq:deg1}
\deg(p_{m_0-1}^{(N+1)}) \leq \deg(k_{-N}) + \deg(p_{m_0-1+N}^{(N+1)})
\end{equation}
From the definition of $k_{-N}$, we have 
\begin{equation}\label{eq:deg2}
\deg(k_{-N}) = \deg(p_{-N}^{(N)}) \leq N
\end{equation}
by the induction hypothesis. Since $m_0 - 1 + N \geq m_0$, the induction hypothesis yields 
\begin{equation}\label{eq:deg3}
\deg(p_{m_0-1+N}^{(N+1)}) \leq -m_0 + 1 -N.
\end{equation}
Therefore, by combining \eqref{eq:deg1}, \eqref{eq:deg2} and \eqref{eq:deg3}, $\deg(p_{m_0-1}^{(N+1)}) \leq -m_0 + 1$ and the claim follows by induction.

Finally, to show that the constant coefficient of $p_m^{(N)}$ vanishes, it
suffices to show that it is the case for $a_m$. Proceeding inductively, since
$a_0 = 0$, notice from \eqref{eq:am} that the only term in $a_{m-1}$ that could
be constant in $\lambda$ is $\frac{1}{\gamma!} D_\xi^{\gamma}(a_1) \del_x^\gamma(a_1)$ with $\gamma = 2-m$. However, $D_\xi^{\gamma}(a_1) = 0$ for $\gamma \geq 2$.
\end{proof}

\begin{rem}
  That $p_m^{(N)} = 0$ whenever $\lambda = 0$ is not surprising. Indeed, this
corresponds to the classic Dirichlet-to-Neumann operator whose symbol is
precisely $\abs{\xi}$.
\end{rem}

If one is interested in computing the symbols explicitly in a given example the
calculations quickly become very involved. The following lemma allows us to
reduce the number of computations to obtain the $k$-th term in the diagonalised
symbol.

\begin{lem}\label{lem:simple}
For all $N \geq \ceil{\frac{-m}{2}}$,
\begin{equation}
\int_0^{2\pi} p^{(-m)}_{m} \de x = \int_0^{2\pi} p^{(N)}_{m} \de x.
\end{equation}
\end{lem}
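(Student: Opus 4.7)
The plan is to prove by induction on $N$ that
\begin{equation}
\int_0^{2\pi} p_m^{(N+1)}(x,\xi)\, \de x = \int_0^{2\pi} p_m^{(N)}(x,\xi)\, \de x
\end{equation}
for every $N \ge \ceil{-m/2}$; since $-m$ belongs to this range, chaining the equalities from $N = \ceil{-m/2}$ up to $N = -m$ then gives the claim. For $N \ge 1 - m$, both sides agree by Proposition \ref{prop:propertiespN}(2), and at $N = -m$ the equality is exactly the content of Proposition \ref{prop:propertiespN}(4). The substantive range is therefore $\ceil{-m/2} \le N \le -m - 1$, where the recursive formula \eqref{eq:pmtilde} applies and I need to show that the $x$-integral of its right-hand side vanishes term-by-term.

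For each index $\alpha \ge 1$ appearing in \eqref{eq:pmtilde}, the hypothesis $N \ge \ceil{-m/2}$ forces $2N + m \ge 0$ and hence $m + \alpha + N \ge 1 - N$. By Proposition \ref{prop:propertiespN}(2), both $p^{(N)}_{m+\alpha+N}$ and $p^{(N+1)}_{m+\alpha+N}$ are then independent of $x$. In the first summand, $D_\xi^\alpha p^{(N)}_{m+\alpha+N}$ factors out of the $x$-integral, leaving $\int_0^{2\pi} \del_x^\alpha k_{-N}\, \de x = 0$ by periodicity of $k_{-N}$ (guaranteed by the mean-zero construction in \eqref{eq:k}, which forces $k_{-N}(0,\xi) = k_{-N}(2\pi,\xi) = 0$); the second summand vanishes pointwise, since $\del_x^\alpha$ annihilates the $x$-independent factor $p^{(N+1)}_{m+\alpha+N}$.

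The main obstacle is the $\alpha = 0$ term, $\int_0^{2\pi} k_{-N}(p^{(N)}_{m+N} - p^{(N+1)}_{m+N})\,\de x$. Whenever $m \ge 1 - 2N$ one has $m + N \ge 1 - N$ and stabilisation gives $p^{(N+1)}_{m+N} = p^{(N)}_{m+N}$, so the integrand vanishes identically. The only remaining case, accessible precisely because the hypothesis $N \ge \ceil{-m/2}$ is sharp, is the critical $m = -2N$. Here $p^{(N+1)}_{-N}$ is the $x$-average of $p^{(N)}_{-N}$ by Proposition \ref{prop:propertiespN}(4), so the quantity $q(x,\xi) := p^{(N)}_{-N}(x,\xi) - p^{(N+1)}_{-N}(\xi)$ has zero $x$-mean. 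The key observation is that by \eqref{eq:k} one has $k_{-N} = -iL\sgn(\xi)\, Q$ with $Q(x,\xi) := \int_0^x q(t,\xi)\,\de t$, so that $Q' = q$ and $Q(0,\xi) = Q(2\pi,\xi) = 0$. Consequently
\begin{equation}
\int_0^{2\pi} k_{-N}\, q\, \de x = -iL\sgn(\xi) \int_0^{2\pi} QQ'\, \de x = -iL\sgn(\xi)\cdot\frac{Q(2\pi,\xi)^2 - Q(0,\xi)^2}{2} = 0,
\end{equation}
which closes the induction step.
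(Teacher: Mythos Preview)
Your proof is correct and follows essentially the same route as the paper's: both arguments split into the cases $m \ge 1 - 2N$ (where the $\alpha=0$ term vanishes by stabilisation and the $\alpha \ge 1$ terms integrate to zero by periodicity of $k_{-N}$) and the critical boundary case $m = -2N$ (where the $\alpha=0$ term is handled by recognising $k_{-N}\,(p^{(N)}_{-N}-p^{(N+1)}_{-N})$ as a constant multiple of $QQ'$ and integrating to zero). Your write-up is somewhat more explicit in isolating the trivial ranges $N = -m$ and $N \ge 1-m$ via Proposition~\ref{prop:propertiespN}(2) and (4), but the substance is identical.
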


\begin{proof}
If $m \geq 1-2N$, then $\del_x^{\alpha} p^{(N)}_{m+\alpha+N} = \del_x^{\alpha} p^{(N+1)}_{m+\alpha+N} = 0$ for all $\alpha > 0$. We also have $p^{(N)}_{m+N} = p^{(N+1)}_{m+N}$ and hence
\begin{equation}
p_{m}^{(N+1)} = p_m^{(N)} + \sum_{\alpha = 1}^{1-m-N} \frac{1}{\alpha!}
(\del_x^\alpha k_{-N})(D_\xi^{\alpha} p_{m+\alpha+N}^{(N)}).
\end{equation}
Therefore, since $p_{m+\alpha+N}^{(N)}$ doesn't depend on $x$, integrating both sides yields
\begin{equation}
\int_{0}^{2\pi} p_{m}^{(N+1)} \de x = \int_{0}^{2\pi} p_{m}^{(N)} + \sum_{\alpha = 1}^{1-m-N} \frac{1}{\alpha!} (D_{\xi}^{\alpha} p_{m+\alpha+N}^{(N)}) \int_{0}^{2\pi} (\del_x^\alpha k_{-N}) \de x.
\end{equation}
The rightmost integral vanishes for all $\alpha$ since $k_{-N}$ is periodic and thus
\begin{equation}
\int_{0}^{2\pi} p_m^{(N+1)} \de x = \int_{0}^{2\pi} p_m^{(N)}\de x.
\end{equation}
Finally, if $m = -2N$, we have
\begin{equation}
\int_0^{2\pi} p_{-2N}^{(N+1)} \de x = \int_0^{2\pi} p_{-2N}^{(N)} \de x + \int_0^{2\pi} k_{-N}(p_{-N}^{(N)} - p_{-N}^{(N+1)}) \de x
\end{equation}
and since $\del_x k_{-N} = -iL\sgn\xi(p_{-N}^{(N)} - p_{-N}^{(N+1)})$ the rightmost integral vanishes. The result then follows since $m \geq -2N$ is equivalent to $N \geq \ceil{\frac{-m}{2}}$.
\end{proof}

The previous lemma simplifies calculations. Indeed, in order to get the
diagonalised term of order $-m$, it suffices to apply the diagonalisation lemma
$\ceil{\frac{m}{2}}$ rather than $m$ times. In particular, we get
\begin{equation}
  \begin{aligned}
\int_0^{2\pi} p_{-2}^{(2)} \de x &= \int_0^{2\pi} p_{-2}^{(1)} \de x \\
&= \int_0^{2\pi} b_{-2}(x,\xi) \de x\\
&= \int_0^{2\pi} \tilde{a}_{-2}(s(x),\xi) \de x.
\end{aligned}
\end{equation}
Using that $s'(x) = \frac{L}{\rho(s(x))}$, we get
\begin{equation}
  \begin{aligned}
\int_0^{2\pi} p_{-2}^{(2)} \de x &= \frac{1}{L} \int_0^{2\pi} \rho(x)
\tilde{a}_{-2}(x,\xi) \de x \\
&= \frac{\lambda L}{4\abs{\xi}^2} \int_0^{2\pi} \frac{\tau_r + 2\tau}{\rho^2} \de x
\end{aligned}
\end{equation}
where the terms containing $i \sgn \xi$ vanish from the fact that
\begin{equation}
\int_0^{2\pi} \frac{\tau_x}{\rho^2} \de x = 2 \int_0^{2\pi} \frac{\tau
\rho'}{\rho^3} \de x,
\end{equation}
this equality being obtained by integrating by parts.
Therefore, by doing a similar calculation for $\int_{0}^{2\pi} b_{-1}(x,\xi) \de
x$, we see that the symbol of $P_2$ is given by
\begin{equation}\label{eq:p3}
p^{(2)}(x,\xi) = \frac{\xi}{L} - \frac{\lambda}{4\pi\abs{\xi}} \int_0^{2\pi} \frac{\tau}{\rho} \de x + \frac{\lambda L}{8\pi \abs{\xi}^2} \int_0^{2\pi} \frac{\tau_r + 2 \tau}{\rho^2} \de x + \bigo{\abs{\xi}^{-3}}.
\end{equation}

\section{General eigenvalue asymptotics from the symbol}\label{sec:asymp}

\subsection{Self-adjointness}

For $\lambda \in \R\cap \CV$ and $\tau$ real-valued, the operator $\Lambda :=
\frac{1}{\rho}\DN_\lambda(\D; \tau)$ is self-adjoint and therefore has real
spectrum. This follows from the fact that $\DN_\lambda(\D; \tau)$ is
self-adjoint and the following lemma applied to $P = \DN_\lambda(\D;\tau)$.

\begin{lem}
 Let $P$ be a self-adjoint pseudodifferential operator on $L^2(\S^1;\de x)$ and
 $\rho > 0$ be a positive function on $\S^1$ and denote $M_{1/\rho}$ the
 operator of multiplication by $\rho^{-1}$. For $f \in
 \operatorname{Diff}(\S^1)$, define by $K_f$ the composition operator $K_f u = u
 \circ f$. Defining 
\begin{equation}
  g(x) = \frac{1}{L} \int_0^x \rho(t) \de t \in \operatorname{Diff}(\S^1),
\end{equation}
 the operator
 \[
  Q = K_g^{-1} M_{1/\rho} P K_g
\]
is self-adjoint on $L^2(\S^1; \de x)$. 
\end{lem}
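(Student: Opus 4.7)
The plan is to realise $Q$ as a unitary conjugate of the multiplication-twisted operator $M_{1/\rho} P$ acting on a $\rho$-weighted $L^2$-space. Two ingredients will do the job: the composition operator $K_g$ intertwines the unweighted and the $\rho$-weighted $L^2$-spaces up to an overall scalar, and $M_{1/\rho} P$ is self-adjoint on the $\rho$-weighted space.

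First, I would check that $g$ is an orientation-preserving diffeomorphism of $\S^1$: since $g'(x) = \rho(x)/L > 0$ and $g(2\pi) = 2\pi$ by the normalisation \eqref{eq:defL}, this is immediate. The change of variable $y = g(x)$ in
\begin{equation*}
\int_0^{2\pi} \abs{K_g u(x)}^2 \rho(x) \de x = \int_0^{2\pi} \abs{u(g(x))}^2 \rho(x) \de x = L \int_0^{2\pi} \abs{u(y)}^2 \de y
\end{equation*}
then shows that $V := L^{-1/2} K_g$ is a unitary from $L^2(\S^1; \de x)$ onto $L^2(\S^1; \rho \de x)$.

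Next I would verify that $M_{1/\rho} P$ is self-adjoint on $L^2(\S^1; \rho \de x)$. Writing $\langle \cdot, \cdot\rangle_\rho$ and $\langle \cdot, \cdot\rangle$ for the two inner products, the $\rho$-weighting cancels the $\rho^{-1}$ factor and the self-adjointness of $P$ on $L^2(\S^1; \de x)$ gives
\begin{equation*}
\langle M_{1/\rho} P u, v\rangle_\rho = \langle P u, v\rangle = \langle u, P v\rangle = \langle u, M_{1/\rho} P v\rangle_\rho,
\end{equation*}
so $M_{1/\rho} P$ is symmetric on $L^2(\S^1;\rho \de x)$; since $M_{1/\rho}$ is a smooth, bounded, invertible multiplication it preserves the domain of $P$ and this symmetry upgrades to self-adjointness. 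It only remains to observe that $Q = K_g^{-1} M_{1/\rho} P K_g = V^{-1} M_{1/\rho} P V$, the scalars $L^{\pm 1/2}$ cancelling, and a unitary conjugate of a self-adjoint operator is self-adjoint.

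The only mildly delicate point will be the bookkeeping around domains: one should check that $V$ maps the domain of $Q$ bijectively onto that of $M_{1/\rho} P$. Since $K_g$ is composition with a smooth diffeomorphism and $M_{1/\rho}$ is smooth multiplication, both preserve $C^\infty(\S^1)$ and interact well with the functional calculus of the elliptic pseudodifferential operator $P$, so this is routine rather than a serious obstacle.
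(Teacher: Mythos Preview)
Your proof is correct and follows essentially the same route as the paper: both show that $K_g$ (up to the scalar $L^{-1/2}$) is a unitary from $L^2(\S^1;\de x)$ to the $\rho$-weighted $L^2$-space, observe that $M_{1/\rho}P$ is self-adjoint on the weighted space, and conclude by unitary conjugation. Your version is slightly more explicit about $g$ being a diffeomorphism and about domain considerations, while the paper writes out the inner-product computation for $Q$ directly rather than citing the general conjugation fact, but the underlying argument is the same.
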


\begin{proof}
 The operator $K_g$ is an invertible isometry from $L^2(\S^1;\de x)$ to $L^2(\S^1;\rho(x)/ L \de x)$. Indeed, for $u,v \in L^2(\S^1; \de x)$, we have
\begin{align*}
 (K_g u,K_g v)_{L^2(\rho(x)/L \de x)} &= \int_0^{2\pi} u(g(x)) v(g(x)) g'(x) \de x \\ 
 &= \int_0^{2\pi} u(x) v(x) \de x \\
 &= (u,v)_{L^2(\de x)}.
\end{align*}
 The operator $M_{1/\rho} P$ is self adjoint on $L^2(\S^1; \rho(x)/L \de x)$, hence we have
 \begin{align*}
  (u, Qv )_{L^2(\de x)} &= (u,K_g^{-1} M_{1/\rho} P K_g v)_{L^2(\de x)} \\&= (K_g u, M_{1/\rho} P K_g v)_{L^2(\rho(x)/L \de x)} \\
  &= (M_{1/\rho} P K_g u, K_g v)_{L^2(\rho(x)/L \de x)} \\
  &= (K_{g}^{-1} M_{1/\rho} P K_g u,v)_{L^2(\de x)} \\
  &= (Qu,v)_{L^2(\de x)},
 \end{align*}
proving that $Q$ is self adjoint.
\end{proof}

\subsection{General eigenvalue asymptotics}
We have shown how to diagonalise the symbol down to any order. We can now deduce
the spectral asymptotics of $\Lambda$ from Proposition \ref{propdiag}.
Eigenvalue asymptotics for an elliptic pseudodifferential operator on a circle
are discussed also in \cite[Theorem 3.1]{agranovichrus}.
\begin{thm}\label{thm:rhotau}
The eigenvalues of $\Lambda$ are asymptotically double and admit a full
asymptotic expansion given by
\begin{equation}
\sigma_{2j} \sim \sigma_{2j-1} \sim \frac{j}{L} +
\sum_{n=1}^{\infty} \frac{1}{2\pi j^n} \int_0^{2\pi} p_{-n}^{(n)}(x,1) \de x
\end{equation}
for all $N \geq 0$. Truncating the series to its first two terms, this yields
\begin{equation}
  \label{eq:shortasymp}
  \sigma_{2j} = \frac{j}{L} - \frac{\lambda}{4\pi j} \int_{\S^1} \frac{\tau}{\rho} \de x +
\frac{\lambda L}{8\pi j^2} \int_{\S^1} \frac{\tau_r + 2 \tau}{\rho^2} \de x +
\bigo{j^{-3}}.
\end{equation}

\end{thm}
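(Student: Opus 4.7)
The plan is to feed the diagonalisation machinery from Section \ref{sec:trans} directly into Proposition \ref{propdiag}, then use symmetries of the symbol to collapse the resulting two asymptotic sequences into one. Concretely, iterating Lemma \ref{diagsub} starting from $P_1$ produces, for each $N \geq 1$, a bounded operator $U_N$ and a pseudodifferential operator $P_N$ with
\[
p^{(N)}(x,\xi) = \frac{|\xi|}{L} + \sum_{n=1}^{N-1} p^{(n+1)}_{-n}(\xi) + O(|\xi|^{-N}),
\qquad p^{(n+1)}_{-n}(\xi) = \frac{1}{2\pi}\int_0^{2\pi} p^{(n)}_{-n}(x,\xi)\de x,
\]
with $\Lambda U_N - U_N P_N \in \Psi^{-\infty}$. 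Applying Proposition \ref{propdiag} to $P_N$ yields two nondecreasing sequences $\{\sigma_j^\pm\}$ whose union (with multiplicity) forms the eigenvalue sequence of $\Lambda$ and
\[
\sigma_j^\pm = \frac{j}{L} + \sum_{n=1}^{N-1} j^{-n} p^{(n+1)}_{-n}(\pm 1) + O(j^{-N}),
\]
where positive homogeneity $p_{-n}^{(n+1)}(\pm j) = j^{-n} p_{-n}^{(n+1)}(\pm 1)$ is used.

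The next step, which is the main point to verify, is that $p^{(n+1)}_{-n}(-1) = p^{(n+1)}_{-n}(1)$ so that the two sequences share the same complete asymptotic expansion, yielding $\sigma_{2j} \sim \sigma_{2j-1}$. By Proposition \ref{propstructure} the symbol of $\Lambda$ is hermitian, and an inductive inspection of the formulas \eqref{eq:atilde}, \eqref{eq:bm} (for $p^{(1)}$) together with \eqref{eq:k}, \eqref{eq:pmtilde} (for the subsequent steps) shows that each transformation preserves hermiticity: the building blocks are the operations of Lemma \ref{lem:herm}, together with averaging over $x$ and the antiderivative in \eqref{eq:k} which multiplies by $\sgn\xi$ (odd and purely imaginary when paired with a minus sign). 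Once the symbol depends only on $\xi$, hermiticity reads $p(-\xi) = \overline{p(\xi)}$; averaging over $x$ in \eqref{eq:pNtilde} kills the odd (imaginary) part, leaving a real, even function of $\xi$. This is consistent with, and in fact matches, the explicit real-valued computation of $p^{(2)}$ carried out in \eqref{eq:p3}.

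Combining the two steps, both $\sigma_j^+$ and $\sigma_j^-$ admit the same asymptotic expansion
\[
\frac{j}{L} + \sum_{n=1}^\infty \frac{1}{2\pi j^n}\int_0^{2\pi} p^{(n)}_{-n}(x,1)\de x,
\]
and interleaving them gives the asymptotic doubling $\sigma_{2j} \sim \sigma_{2j-1}$ stated in the theorem. The closed-form two-term truncation \eqref{eq:shortasymp} is then immediate from \eqref{eq:p3}: one reads off the coefficients at $n=1,2$ and evaluates at $\xi = 1$.

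The only genuinely delicate step is the hermiticity-preservation argument for the iterated symbols; everything else is bookkeeping using results already proved in Sections \ref{sec:LU} and \ref{sec:trans}. If one prefers to avoid this abstract symmetry argument, the alternative is to inductively track that each $p^{(n)}_{-n}$ is, after integration in $x$, a real-valued even function of $\xi$ by examining \eqref{eq:am}, \eqref{eq:atilde}, \eqref{eq:pmtilde} directly; the cancellation of the $i\sgn\xi$ terms observed below \eqref{eq:p3} is the prototype of this computation and extends verbatim.
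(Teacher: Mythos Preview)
Your overall strategy matches the paper's: feed the diagonalised symbols from Section~\ref{sec:trans} into Proposition~\ref{propdiag}, then argue that the two sequences $\sigma_j^\pm$ coincide because the diagonalised terms $p^{(n+1)}_{-n}(\xi)$ are even in $\xi$. The hermiticity-preservation argument you outline is also essentially the paper's (the paper spells out the FIO step more carefully, checking via Fa\`a di Bruno that each term in \eqref{eq:atilde} stays hermitian, but the idea is the same).

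There is, however, a genuine gap in your passage from hermiticity to evenness. Hermiticity of a symbol depending only on $\xi$ gives $p(-\xi)=\overline{p(\xi)}$, i.e.\ the real part is even and the imaginary part is odd in $\xi$. Your assertion that ``averaging over $x$ in \eqref{eq:pNtilde} kills the odd (imaginary) part'' is not justified: averaging a hermitian symbol over $x$ produces a hermitian function of $\xi$, but there is no general reason the imaginary part should vanish. The integration-by-parts cancellation you cite just below \eqref{eq:p3} is a feature of the specific $n=2$ computation, not an argument that ``extends verbatim'' to all $n$.

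The paper closes this gap by a different route: it invokes the self-adjointness of $\Lambda$ (established at the start of Section~\ref{sec:asymp}). Since the eigenvalues $\sigma_j^\pm$ are real and satisfy $\sigma_j^\pm = \sum_{m=0}^{N} p_{1-m}(\pm 1)\, j^{1-m} + O(j^{-N})$ for every $N$, each coefficient $p_{1-m}(\pm 1)$ must be real. Reality together with hermiticity then forces $p_{1-m}$ to be even in $\xi$, giving $p_{1-m}(j)=p_{1-m}(-j)$ and hence $\sigma_j^+\sim\sigma_j^-$. You should replace the averaging claim with this appeal to self-adjointness.
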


\begin{proof}
The fact that the eigenvalues admit a complete asymptotic expansion follows from
Proposition \ref{propdiag} and Lemma \ref{diagsub}. Moreover, \eqref{eq:shortasymp}
follows from equation \eqref{eq:p3} and Proposition \ref{propdiag} . It
remains to show that the eigenvalues are asymptotically double. This will follow
from Proposition \ref{propdiag} if we can show that, for all $N \in \N$, there
exist a bounded operator $U_N$ and a pseudodifferential operator $P_N$ with
symbol
\begin{equation}
p(x,\xi) = \sum_{m=0}^N p_{1-m}(\xi) + \bigo{\abs{\xi}^{-N}}
\end{equation}
such that $p_{1-m}$ is an even function of $\xi$ (since then $p_{1-m}(j) =
p_{1-m}(-j)$) and such that $\Lambda U_N - U_N P_N$ is smoothing. To do so, it is
sufficient to show that a symbol being hermitian is an invariant
property of the diagonalisation procedure, see Definition \eqref{def:hermitian}.
The claim will then follow since $\Lambda$ is self-adjoint and hence all its
eigenvalues must be real.

We know from Proposition \ref{propstructure} that the symbol of $\Lambda$ is
hermitian. In order to diagonalise the principal symbol, we conjugated by the
Fourier integral operator $\Phi$. The resulting symbol is given by
\begin{equation}
b(x,\xi) \sim \sum_{m \leq 1} \tilde{a}_m(s(x),\xi)
\end{equation}
where $\tilde{a}_m$ is given by \eqref{eq:atilde}. It suffices to show that
$\tilde{a}_m$ is hermitian for all $m$. This is a consequence of the fact that
\begin{equation}\label{eq:eventildea}
  D_{\tilde \eta}^\alpha \del_y^\alpha r_{m + \alpha}(x,\tilde \eta + R(x,y,\xi)(y-x)) \bigg|_{\substack{\tilde \eta = \frac{\xi \rho(x)}{L} \\ y = x}}
\end{equation}
is hermitian for all $\alpha \geq 0$. Indeed, by Leibniz's formula and \eqref{eq:R} we have
\begin{equation}
\del_{y}^\beta [R(x,y,\xi)(y-x)] \big|_{y=x} = \frac{\xi}{(\beta+1) L} \rho^{(\beta)}(x)
\end{equation}
for all $\beta \geq 0$. Hermiticity of
\eqref{eq:eventildea} then
follows from Faà di Bruno's formula since each derivative in the second argument
will come with a power of $\xi$, thus preserving the parity in the real and
imaginary parts.

Let $N \geq 0$ and suppose that $\Lambda U_N - U_N P_N \in \Psi^{-\infty}$ as in
the notation of Lemma \ref{diagsub} is such that the symbol $p^{(N)}$ of
$P_N$ is hermitian. From \eqref{eq:k}, \eqref{eq:pmtilde} and Lemma
\ref{lem:herm}, we see that the symbol $p^{(N+1)}$ of $P_{N+1}$ is also
hermitian. The fact that the spectrum is asymptotically double then follows from
the previous discussion.

\end{proof}

\section{Eigenvalue asymptotics}\label{sec:Steklovasymp}

Let $(\Omega,g)$ be a simply connected compact Riemannian surface with smooth boundary
$\Sigma$. We are now interested in finding the spectral asymptotic for the
operator
$\DN_\lambda(\Omega;\tau)$ corresponding to the problem
\begin{equation}
\begin{cases}
  - \Delta_g u = \lambda \tau u & \text{in } \Omega; \\
  \del_\nu u = \sigma u & \text{on } \Sigma;
\end{cases}
\end{equation}
the parametric Steklov problem on $\Omega$. By the Riemann
mapping theorem, there exists a conformal diffeomorphism $\varphi$ which maps $(\D,g_0)$
onto $\Omega$ such that $\varphi^*g = e^{2f} g_0$ for some smooth function $f :
\D \rightarrow \R$. Therefore, the parametric Steklov problem on
$(\Omega,g)$ is isospectral to the problem
\begin{equation}
 \begin{cases}
  - \Delta u = \lambda e^{2f}\phi^*\tau u & \text{in } \D; \\
  \del_\nu u = \sigma e^{f} u & \text{on } \S^1.
 \end{cases}
\end{equation}
In the notation of \eqref{eq:defL}, we have
\begin{equation}
L = \frac{1}{2\pi} \int_{0}^{2\pi} e^f \de x = \frac{\mathrm{\per}_g(\Sigma)}{2\pi}.
\end{equation}

We are now in a position to prove our main results about eigenvalue asymptotics.

\begin{proof}[Proof of Theorem \ref{thm:sc}]
  The theorem follows directly from Theorem \ref{thm:rhotau} for the existence
  of the complete asymptotic expansion. The fact that $s_n$ is a polynomial in $\lambda$ of degree at most $n$ follows directly from Lemma $\ref{lem:deglambda}$. For the explicit values of $s_{-1}$ and
  $s_{-2}$ when $\tau \equiv 1$, we replace in \eqref{eq:shortasymp} the values of $\tau$ and $\rho$
  by the conformal factor. The second term in \eqref{eq:shortasymp} is given by
  \begin{equation}
    \frac{\lambda}{4\pi j} \int_{\S^1} e^{f} \de x = \frac{\lambda L}{2j}.
  \end{equation}
  Finally, the third term is given by $\frac{\lambda L}{8\pi j^2} (G + 4\pi)$
  where
\begin{equation}
G := \int_{\S^1} \frac{(e^{2f})_r}{e^{2f}} \de x = \int_{\S^1} \del_\nu \log
e^{2f} \de x = 2 \int_{\S^1} \del_\nu f \de x.
\end{equation}
By Green's theorem, we have
\begin{equation}
G = 2 \int_{\D} \Delta f \de A.
\end{equation}
Recall that the Gaussian curvature of $(\D, \varphi^* g)$ is given by
\begin{equation}
K_{\varphi^* g} = -e^{-2f} \Delta f.
\end{equation}
Hence, since $\varphi^* K_g = K_{\varphi^* g}$ and $\varphi^* \de A_g = e^{2f} \de A$,
\begin{equation}
  \begin{aligned}
G &= -2 \int_{\D} K_{\varphi^* g} e^{2f} \de A \\&= -2 \int_{\D} \varphi^*(K_{g} \de
A_g) \\&= -2 \int_{\Omega} K_g \de A_g.
\end{aligned}
\end{equation}
Combining everything and using the Gauss-Bonnet theorem yields
\begin{equation}
  \begin{aligned}
\frac{\lambda L}{8\pi j^2} (G + 4 \pi) &= \frac{\lambda L}{4\pi j^2}\left(2\pi -
\int_{\Omega} K_g \de A_g\right)\\ &= \frac{\lambda L}{4\pi j^2} \int_{\Sigma} k_g
\de s
\end{aligned}
\end{equation}
since $\Omega$ is simply connected, and hence its Euler characteristic is $1$.
\end{proof}

\begin{proof}[Proof of Theorem \ref{thm:mc}]

Let $(\Omega, g)$ now be any Riemannian surface
whose smooth boundary $\Sigma$ has $\ell$ connected
components $\Sigma_1, \dots, \Sigma_\ell$. For $1 \le m \le \ell$, let $\Omega_m$ be a smooth
topological disk with a Riemannian metric $g_m$ such that there is an isometry
$\phi_m : \tilde \Upsilon_m \to \Upsilon_m$, for collar
neighbourhoods $\tilde \Upsilon_m$ of $\del \Omega_m$ and
$\Upsilon_m$ of $\Sigma_m$. The existence of $(\Omega_m,g_m)$ is guaranteed by Lemma
\ref{lem:cutting}. Define $\tau_m : \tilde \Upsilon_m \to \R$ by $\tau_m =
\phi_m^* \tau \big|_{\Upsilon_m}$. Extend $\tau_m$ to a smooth function on
$\Omega_m$, which we still denote $\Omega_m$. This can be done, say, with an
harmonic extension and then smoothing it with a mollifier, maybe reducing a
little bit the size of the collar neighbourhoods. Denote by $\Omega_\sharp$ the
disjoint union of the disks
$\Omega_m$, $g_\sharp$ the metric which restricts to $g_m$ on every $\Omega_m$ and $\tau_\sharp \in C^\infty(\Omega)$ to be the function which
restricts to $\tau_m$ on every $\Omega_m$.

From Lemma \eqref{lem:isombdry}, we know that
\begin{equation}
  \sigma_j(\lambda, \tau, \Omega) \sim \sigma_j(\lambda, \tau_\sharp,  \Omega_\sharp).
\end{equation}
This concludes the proof of statement (A), and also implies statement (B) since
every other metric and function $\tau'$ satisfying the conclusion of statement (A) is
isometric to $\tau_\sharp$ and $g_\sharp$ in a collar neighbourhood of
$\Omega_\sharp$.

Since $\Omega_\sharp$ is a union of disks, its spectrum is given by the union of
each disk's spectrum. Applying Theorem \eqref{thm:sc} to each $\Omega_m$, and
using that the parametric Steklov spectrum of a disjoint union of surfaces is
the union of their spectra we
see that the spectrum of $\Omega$ is the union of $\ell$ different sequences
taking the form of equation \eqref{eq:aexpsc}. This is the statement (C) of Theorem
\ref{thm:mc}, which concludes its proof.

\end{proof}
\section{Geometric spectral invariants}
\label{sec:invariants}

In this section we obtain spectral invariants that have a geometric
interpretation in the case where the potential is a constant, $\tau \equiv 1$.
When the surface $\Omega$ is simply connected, the search for spectral
invariants is easier.
From the first two terms of the eigenvalue asymptotic expansion, we can deduce
uniquely the values of both $L$ and $\lambda$. Hence, from the third term, we
can deduce uniquely the value of $\int_{\Sigma} k_g \de s$ and it is a
spectral invariant. From the Gauss--Bonnet theorem, we get the following result.

\begin{cor}\label{cor:simply}
Let $(\Omega,g)$ be a simply connected compact Riemannian surface with smooth boundary
$\Sigma$. Then, the total
curvature
\begin{equation}
  \int_\Omega K_g \de A_g
\end{equation}
is a spectral invariant of the constant potential parametric Steklov problem on $\Omega$. In
particular, if the Gaussian curvature is assumed to be a constant $K(\Omega)$,
the quantity
\begin{equation}
K(\Omega) \area(\Omega)
\end{equation}
is a spectral invariant of the constant potential parametric Steklov problem on $\Omega$.
\end{cor}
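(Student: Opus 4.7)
The plan is to read off the relevant invariants from the complete asymptotic expansion of the eigenvalues provided by Theorem \ref{thm:sc}, and then convert boundary information into interior information via the Gauss--Bonnet theorem.

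First I would use the first three terms of the asymptotic expansion
\begin{equation*}
\sigma_{2j} \sim \sigma_{2j-1} \sim \frac{j}{L} + s_1(\lambda;\Omega)\, j^{-1} + s_2(\lambda;\Omega)\, j^{-2} + \cdots
\end{equation*}
to extract boundary geometric data from the spectrum alone. The leading term recovers $L = \per(\Sigma)/(2\pi)$ via $L^{-1} = \lim_{j \to \infty} \sigma_{2j}/j$. Using \eqref{eq:particularparam}, the coefficient $s_1 = -\lambda L/2$, once $L$ is known, determines the parameter $\lambda$. Assuming $\lambda \neq 0$, the next coefficient $s_2 = (\lambda L)/(4\pi) \int_\Sigma k_g \de s$ then uniquely pins down the total geodesic curvature $\int_\Sigma k_g \de s$. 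Uniqueness of the coefficients in the asymptotic expansion follows from the definition of $\sim$ by successive subtraction: $s_1$ is $\lim_{j\to\infty} j(\sigma_{2j} - j/L)$, and each subsequent coefficient is extracted in the same recursive fashion.

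Next I would invoke the Gauss--Bonnet theorem for compact orientable surfaces with boundary. Since $\Omega$ is simply connected, $\chi(\Omega) = 1$, so that
\begin{equation*}
\int_\Omega K_g \de A_g + \int_\Sigma k_g \de s = 2\pi \chi(\Omega) = 2\pi.
\end{equation*}
Rearranging, the total Gaussian curvature $\int_\Omega K_g \de A_g = 2\pi - \int_\Sigma k_g \de s$ is determined by the spectrum, hence is a spectral invariant. If in addition the Gaussian curvature equals a constant $K(\Omega)$, then $\int_\Omega K_g \de A_g = K(\Omega)\area(\Omega)$, and the product $K(\Omega)\area(\Omega)$ is itself a spectral invariant.

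There is no substantive obstacle: all of the work has been done in Theorem \ref{thm:sc}, and the proof amounts to observing that three consecutive asymptotic coefficients, combined with the Gauss--Bonnet identity, yield the claimed invariant. The only subtlety is that extraction of $\int_\Sigma k_g \de s$ via $s_2$ requires $\lambda \neq 0$; when $\lambda = 0$ the entire family $\set{s_n}$ vanishes and we recover the classical Steklov operator, for which one cannot hope to read off curvature information from polynomial asymptotics. Accordingly, the statement should be interpreted in the parametric sense, i.e.\ for some admissible $\lambda \in (\R \cap \CV) \setminus \set{0}$.
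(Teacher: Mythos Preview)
Your proof is correct and follows essentially the same approach as the paper: extract $L$, $\lambda$, and $\int_\Sigma k_g \de s$ from the first three terms of the expansion in Theorem~\ref{thm:sc}, then apply Gauss--Bonnet with $\chi(\Omega)=1$ to obtain the total Gaussian curvature. Your explicit remark that the argument requires $\lambda \neq 0$ is a welcome clarification.
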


In the multiply connected case, we need to introduce some definitions to talk
about functions between two multisets. To determine the number of boundary
components and the lengths of them, we will use methods from Diophantine
approximation.
This is in the spirit of \cite{GPPS}, where they obtained those quantities as invariants of the Steklov problem
with $\lambda = 0$. There, they had an asymptotic expansion of the form
\eqref{eq:sequence}--\eqref{eq:mc}, where all the coefficients $s_n$ were $0$.
However in order to obtain the number of boundary components and their lengths
as spectral invariants,
they need only that the second term is $\smallo 1$, which we do have. 

Recovering $\lambda$ as well as the total geodesic curvature of the boundary is
more complicated and requires an algorithmic procedure to recover subsequences (which
can be explicitly constructed) once we know the number of
boundary components and the length of the largest one. We start by introducing
terminology found in \cite[Section 2.3]{GPPS}

\begin{defi}
  Let $A$, $B$ be two multiset of positive real numbers. We say that $F : A \to
  B$ is \emph{close} if it has the property that for every $\eps >0$, there are
  only finitely many $x \in A$ with $\abs{F(x) - x} \ge \eps$. We say that $F$
  is an \emph{almost-bijection} if for all but finitely many $y \in B$, the
  pre-image $F^{-1}(y)$ consists in a single point.
\end{defi}
For a finite set of positive real numbers $M = \set{\alpha_1,\dotsc,\alpha_\ell}$,
we denote by $R(M)$ the multiset 
$$R(M) := \set{0,\dotsc,0} \cup \alpha_1 \N \cup \alpha_1 \N \cup
\dotso \cup \alpha_\ell \N \cup \alpha_\ell \N,
$$
where $0$ is repeated $\ell$ times and the union is understood in the sense of
multisets, i.e. multiplicity is conserved.

\begin{prop} \label{prop:decoupling}
  Let $M = \set{\alpha_1,\dotsc,\alpha_\ell}$ be a finite multi-set of positive numbers. For $N \in \N$, let
  $$\Xi^{(N)} = \set{\set{\xi^{(m,N)}_j : j \in \N} : 1 \le m \le \ell}$$ be a set of sequences given by $\xi^{(m,N)}_0 = 0$ and such that for $j \geq 1$,
  \begin{equation}
    \label{eq:aexpabstract}
    \xi^{(m,N)}_{2j} = \xi^{(m,N)}_{2j-1} + \bigo{j^{-N-1}} = j\alpha_m + \sum_{n = 1}^N s_n^{(m)} j^{-n} + \bigo{j^{-N-1}}.
  \end{equation}
  Then, $M$ and the quantities $s_n^{(m)}$ for $1 \leq n \leq N$ are uniquely determined by the sequence $S(\Xi^{(N)})$ defined as the reordering of the union of the sequences $\xi^{(m,N)}$ in increasing order.
\end{prop}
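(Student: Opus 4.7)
The plan is to recover $M$ (with multiplicities) first, and then to peel off the sequences $\xi^{(m,N)}$ inductively in decreasing order of $\alpha_m$, reading off each $s_n^{(m)}$ from the isolated pair of asymptotic eigenvalues. For the first step, since $\xi^{(m,N)}_{2j} = j\alpha_m + \smallo{1}$, the sorted sequence has counting function
\[
 \#\set{k : S(\Xi^{(N)})_k \le t} = \sum_{m=1}^\ell 2\fl{t/\alpha_m} + \bigo{1},
\]
and the recovery argument of \cite{GPPS} (which, as the preceding paragraph points out, requires only this $\smallo{1}$ correction) returns $M$ with multiplicities.

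Next I would order the distinct values of $M$ as $\alpha_{(1)} > \dotsb > \alpha_{(r)}$ with multiplicities $\mu_1,\dotsc,\mu_r$. At stage $k$, assume the sub-multisets corresponding to every $\xi^{(m,N)}$ with $\alpha_m > \alpha_{(k)}$ have already been identified and removed from $S(\Xi^{(N)})$; call the remainder $S'$. By Weyl equidistribution applied to the ratios $\alpha_{(k)}/\alpha_{(k')}$ for $k' > k$ (together with a restriction to residue classes handling the rationally commensurable ones), there is an infinite set $J \subset \N$ and a constant $c > 0$ with
\[
 \operatorname{dist}\bigl(j\alpha_{(k)}, \alpha_{(k')}\Z\bigr) \ge c \quad \text{for all } j \in J,\, k' > k.
\]
For such $j$ the error $\bigo{j^{-N-1}}$ in \eqref{eq:aexpabstract} is eventually dwarfed by $c$, so the $2\mu_k$ elements of $S'$ closest to $j\alpha_{(k)}$ are precisely the pairs $\xi^{(m,N)}_{2j-1}, \xi^{(m,N)}_{2j}$ for the $\mu_k$ indices $m$ with $\alpha_m = \alpha_{(k)}$. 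Reading the multiset of deviations $\xi^{(m,N)}_{2j} - j\alpha_{(k)}$ along $j \in J$, each entry is a polynomial in $j^{-1}$ of degree $N$ up to $\bigo{j^{-N-1}}$, and I recover the coefficients by iterated limits: $s_1^{(m)} = \lim_{j\in J} j\bigl(\xi^{(m,N)}_{2j} - j\alpha_{(k)}\bigr)$, then $s_2^{(m)}$ from the next-order residue, and so on. Once the $s_n^{(m)}$ are known for all $n \le N$ and all $m$ with $\alpha_m = \alpha_{(k)}$, these sub-multisets are reconstructed along every $j$ up to $\bigo{j^{-N-1}}$, matched against $S'$, subtracted, and the procedure advances to stage $k+1$.

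The main obstacle is the fine bookkeeping when several indices $m$ share the same $\alpha_{(k)}$: the $\mu_k$ polynomial expansions clustered around $j\alpha_{(k)}$ all have leading terms $s_1^{(m)} j^{-1}$, and if some of the $s_1^{(m)}$'s coincide one must descend to order $j^{-2}$ to split them further, and so on. The resolution is algorithmic: at each order we partition the deviations into groups according to the already-computed coefficients, then refine each group using the next order; the procedure terminates after at most $N$ refinements because deviations are polynomials of degree $N$ in $j^{-1}$. Since every choice in the extraction is deterministic and depends only on the ordered multiset $S(\Xi^{(N)})$, the output $M$ together with the multiset $\set{(s_1^{(m)},\dotsc,s_N^{(m)}) : 1 \le m \le \ell}$ is uniquely determined, which is precisely the claim.
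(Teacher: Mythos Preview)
Your overall strategy---recover $M$ first, then peel off the sequences and read the coefficients by iterated limits, refining by multiplicity at each order---matches the paper's. The genuine gap is in the \emph{direction} of your induction. You process the $\alpha_{(k)}$ in decreasing order, and at stage $k$ you claim that Weyl equidistribution plus residue restrictions yields infinitely many $j$ with $\operatorname{dist}(j\alpha_{(k)},\alpha_{(k')}\Z)\ge c$ for every smaller $\alpha_{(k')}$. This fails exactly when $\alpha_{(k)}$ is an integer multiple of some $\alpha_{(k')}$ with $k'>k$: then $j\alpha_{(k)}\in\alpha_{(k')}\Z$ for every $j$, so the distance is identically zero and no choice of $J$ can separate the two clusters. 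The simplest instance is $M=\{1,2\}$: near $2j$ you always pick up the pair $\xi^{(1,N)}_{4j-1},\xi^{(1,N)}_{4j}$ from the $\alpha=1$ sequence as well, so the ``$2\mu_k$ closest elements'' you extract are contaminated.

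The paper avoids this by inducting along the divisibility partial order $\alpha_k\prec\alpha_m\Longleftrightarrow\alpha_m\in\alpha_k\Z_{\ge 2}$ rather than along size. It first treats the \emph{minimal} $\alpha_m$ (not an integer multiple of any other element of $M$), for which $\delta=\tfrac12\min\{|\alpha_m-n\alpha_k|:\alpha_k\ne\alpha_m,\ n\in\N\}>0$; Dirichlet's simultaneous approximation theorem is used to align multiples of all the $\alpha_k$ near a common point $Qq$, and then one further step of $\alpha_m$ lands in a $\delta$-isolated window. For non-minimal $\alpha_m$ the paper does \emph{not} subtract the smaller sequences (which would run into your reconstruction-up-to-$O(j^{-N-1})$ ambiguity); it accepts that they sit in the same cluster, but since their coefficients are already known from the induction hypothesis, the corresponding limit points $s_n^{(k)}/r(k)^n$ can be identified and discarded. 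Reversing your induction to go up in the divisibility order, and replacing the subtraction step by this ``discard known limit points'' manoeuvre, would repair your argument.
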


Let us first describe heuristically how the proof goes. In the first step, we
simply show that \cite[Lemmas 2.6 and 2.8]{GPPS} apply to this situation. This
will allow us to recover $M$ from $S(\Xi^{(N)})$, and we assume from then on that $M$,
and therefore $R(M)$, are already known to be spectral invariants.

In the second step, we show that for any $\alpha_m \in M$ which is not an integer
multiple of another strictly smaller element of $M$, we can identify a subsequence along which
$S(\Xi^{(N)})_j = \xi^{(m,N)}_{k(j)}$ where $k : \N \to \N$ is a function that can be
computed explicitly. For this, we use Dirichlet's simultaneous approximation
theorem.

In the third step, we obtain the coefficients of those sequences $\alpha_m$ that
we decoupled in the previous step. Obviously, if $\alpha_m$ appears only once in
$M$ this is trivial, the difficulty comes when $\alpha_m$ has
multiplicity.

In the fourth step, we proceed inductively and show that if $\alpha_m$ is an
integer multiple of some other $\alpha_n \in M$, but we already know the
coefficients of the relevant sequences for $\alpha_n$, then we can apply the
same procedures as in steps 2  and 3 to recover the coefficients of $\xi^{(m,N)}$ for any $N$.

\begin{proof}
  \textbf{Step 1: } We obtain $M$ from $S(\Xi^{(N)})$. Combining \cite{GPPS}[Lemmas
  2.6 and 2.8], as soon as $A$ is a multiset such that there exists a
  close almost-bijection $F : R(M) \to A$, then we can recover $M$ from $A$. Let
  us describe how this is done.
  
  The close almost-bijection gives us $$\alpha_1 = \limsup_{j \to \infty} A_{j+1} -
  A_j,$$ this is the content of \cite{GPPS}[Lemma 2.6]. We then write $A^{(1)} =
  A$, and $R_1 = R(M)$. Assuming that for $2 \le m \le \ell+1$ we have found
  $\alpha_{m-1}$, we write
  $$R_m = R_{m-1} \setminus (\alpha_{m-1} \N \cup
  \alpha_{m-1} \N).$$
Assuming that there is a close almost-bijection $F_{m-1} : R_{m-1} \to
A^{(m-1)}$, there is $K \in \N$ such that for all $k \ge N$, there are at least
two elements of $A^{(m-1)}$ at distance less than $\alpha_1/10$, say, from
$\alpha_{m-1} k$. Construct $A^{(m)}$ by removing from $A^{(m-1)}$ the two
closest such elements (in case of ties choose the largest). 

\cite{GPPS}[Lemma 2.8] states that the existence of a close almost-bijection
$F_{m-1} : R_{m-1} \to A^{(m-1)}$ implies that there is a close almost-bijection
$F_m : R_m \to A^{(m)}$. Applying recursively \cite{GPPS}[Lemma 2.6] gives us 
\begin{equation}
  \alpha_m = \limsup_{j \to \infty} A^{(m)}_{j+1} - A^{(m)}_j,
\end{equation}
which is the necessary ingredient for this recursion to continue.
This is done until $A^{(\ell + 1)}$ and $R_{\ell+1}$  are finite, at which point
$M$ is exhausted and we have recovered $M$ from $A$.
  
  Now, it is not hard to see that the map $F : R(M) \to S(\Xi^{(N)})$
  that maps $R(M)_j$ to $S(\Xi^{(N)})_j$ is a close almost-bijection. Indeed, it follows from the definition of the sequences $\xi^{(m,N)}$ that
\begin{equation}
S(\Xi^{(N)})_j = R(M)_j + \bigo{j^{-1}}
\end{equation}
which implies that $F$ is a close almost-bijection. Our previous analysis tells
us that we can recover $M$ from $S(\Xi^{(N)})$, for any $N\ge 1$.

  \textbf{Step 2: } Suppose without loss of generality that the smallest element of $M$ is $1$.
  Define on positive real numbers the strict partial order $x \prec y$ if there
  is an integer $n \ge 2$ such that $y = nx$, and denote by $x \preceq y$ the
  non-strict version of this partial order, i.e. if $n = 1$ is allowed. For any multiset $U$ of positive
  real numbers, we
  say that $x \in U$ is minimal in $U$ if for all $y \in U$, either $x \preceq y$,
  or $x$ and $y$ are incomparable. Let $I \subset \set{1,\dotsc,\ell}$ be
  defined as
  \begin{equation}
    I = \set{1 \le m \le \ell : \alpha_m \text{ is minimal in } M}.
  \end{equation}
  We claim that there exist $\delta > 0$ and subsets $E_m \subset \N$ of
  infinite cardinality for each $m \in I$ such that for all $j \in E_m$, 
  \begin{equation}
    \label{eq:isolatedintersection}
    [j \alpha_m - \delta, j \alpha_m +\delta] \cap R(M) =
    \underbrace{\set{j\alpha_m,\dotsc,j\alpha_m}}_{2\mu(m) \text{ times}},
  \end{equation}
  where $\mu(m)$ is the multiplicity of $\alpha_m$ in $M$.

  Split $M$ into $M_1 \cup M_2$, where $M_1 \subset \Q$ and $M_2 \subset \R
  \setminus \Q$. Let $Q$ be the smallest common integer multiple of elements in
  $M_1$. Dirichlet's simulateneous approximation theorem states that there is an
  infinite subset $E \subset \N$ such that for all $q \in E$ and $\alpha_m \in
  M_2$ there exists $p_{q,m} \in \N$ such that
  \begin{equation}
    \abs{\frac{Q}{\alpha_m} - \frac{p_{q,m}}{q}} < \frac{1}{q^{1+1/\ell}}
  \end{equation}
  or, equivalently,
  \begin{equation}
    \abs{Qq - p_{q,m} \alpha_m} < \alpha_m q^{-1/\ell}.
  \end{equation}
  This means that for all $q \in E$, there is an integer multiple of $\alpha_m$
  within $q^{-1/\ell}$ of $qQ$. Note that for $\alpha_m \in M_1$, the integer multiple is
  actually exactly $qQ$. In that case we put $p_{q,m} = Q q \alpha_m^{-1}$. Set
  \begin{equation}
    \delta = \frac 1 2 \min
    \set{\abs{\alpha_m - n \alpha_k} : m \in I, \alpha_k \ne \alpha_m, n \in \N},
  \end{equation}
  and observe that $\delta > 0$ from the assumption that $\alpha_m$
  is minimal in $M$ for all $m \in I$. Assume that $\alpha_\ell$ is the largest element of $M$ and for $m \in I$, set
  \begin{equation}
    \label{eq:Em}
    E_m :=\set{p_{q,m} + 1 : q \in E, q^{-1/\ell} < \frac{\delta}{2\alpha_\ell}}.
  \end{equation}
  We claim that for all $j \in E_m$, \eqref{eq:isolatedintersection} holds.
  Indeed, if $\alpha_k \ne \alpha_m$ and $n \in \N$, we have
  \begin{equation}
    \begin{aligned}
    \abs{j \alpha_m - n \alpha_k} &= \abs{(p_{q,m} + 1)\alpha_m - (p_{q,k} + n')
    \alpha_k} \\
    &\ge \abs{\alpha_m - n' \alpha_k} - \abs{p_{q,m} \alpha_m - p_{q,k}
    \alpha_k} \\
    &\ge 2 \delta - (\alpha_m + \alpha_k) q^{-1/\ell} \\
    &> \delta.
  \end{aligned}
  \end{equation}
  It follows that no integer multiple of $\alpha_k \ne \alpha_m$ is within
  distance $\delta$ of $j \alpha_m$, when $j \in E_m$. On the other hand, by
  definition of $R(M)$, and assuming without loss of generality that $\delta <
1$, $j \alpha_m $ is the only integer multiple of $\alpha_m$ in the interval $[j
\alpha_m - \delta, j \alpha_m + \delta]$, and this happens with multiplicity
$2\mu(m)$. 

\textbf{Step 3: } For $m \in I$, we recover the quantities $s_n^{(k)}$ from $S(\Xi^{(N)})$ for any $n \leq N$ and for all $k$ such that $\alpha_k = \alpha_m$.

Let $j \in E_m$ and observe that for any $N \in \N$, the indices in the sequence $S(\Xi^{(N)})$ for the elements in the interval $[j \alpha_m - \delta, j \alpha_m + \delta]$ can be uniquely determined from $R(M)$, which is determined by $S(\Xi^{(N)})$ as seen in the first step of this proof. It follows from \eqref{eq:aexpabstract}, that for all $k$ such that $\alpha_m = \alpha_k$ and $j \in E_m$ large enough, we have
\begin{equation}
  \label{eq:interseq}
  \set{\xi^{(k,N)}_p : p \in \N} \cap [j \alpha_m - \delta, j \alpha_m + \delta] =
    \set{\xi^{(k,N)}_{2j-1},\xi^{(k,N)}_{2j}}.
\end{equation}
For any $N \geq 1$, consider the set
\begin{equation}
X_1^{(m,N)} = \{(x - j\alpha_m) j  : j\in E_m, x \in S(\Xi^{(N)}) \cap [j \alpha_m - \delta, j\alpha_m + \delta]\}.
\end{equation}
From the definition of $E_m$, we have
\begin{equation}
X_1^{(m,N)} = \bigcup_{k: \alpha_k = \alpha_m} \set{(\xi^{(k,N)}_{2j-1} - j\alpha_m)j, (\xi^{(k,N)}_{2j} - j \alpha_m)j}_{j \in E_m}.
\end{equation}
Consider the limit points of $X_1^{(m,N)}$. We claim that those points are exactly the values of $s_1^{(k)}$ for which $\alpha_k = \alpha_m$ . In fact, from the previous equation, $X_1^{(m,N)}$ is a union of sequences and the claim follows from the fact that
\begin{equation}
\lim_{\substack{j \to \infty \\ j \in E_m}} (\xi^{(k,N)}_{2j-1} - j\alpha_m)j = \lim_{\substack{j \to \infty \\ j \in E_m}} (\xi^{(k,N)}_{2j} - j\alpha_m)j = s_{1}^{(k)}.
\end{equation}
Moreover, we can know the number of $k'$ such that $s_1^{(k')} = s_1^{(k)}$, which we denote by $\mathrm{mult}(s_1^{(k)})$. Indeed, by setting
\begin{equation}
\varepsilon = \frac{1}{2} \min\set{\abs{s_1^{(k)} - s_1^{(k')}} : s_1^{(k)} \neq s_1^{(k')}, \alpha_k = \alpha_m},
\end{equation}
we have that $\frac{\mathrm{mult}(s_1^{(k)})}{\mu(m)}$ is given by
\begin{equation}
\lim_{J\rightarrow \infty} \frac{\abs{\{(x - j\alpha_m)j \in X_1^{(m,N)} \cap (s_1^{(k)} - \varepsilon, s_1^{(k)} + \varepsilon) : j \in E_m, j \leq J\}}}{2\abs{\{j \in E_m : j \leq J\}}}.
\end{equation}
Note that from the construction, we cannot directly know which $k$ is associated to each $s_1^{(k)}$, but without loss of generality we can label them in any way we choose since we know their multiplicity. For $k$ with $\alpha_k = \alpha_m$, we construct the sequences
\begin{equation}
\eta_{j}^{(k,1)} = j\alpha_m + s_{1}^{(k)}j^{-1}
\end{equation}
taking into account the multiplicity of $s_1^{(k)}$. We let $\mult(\eta_j^{(k,1)})$ be the number of such sequences identical to $\eta_j^{(k,1)}$. In this case, $\mult(\eta_j^{(k,1)}) = \mult(s_1^{(k)})$. Moreover, $\eta_j^{(k,1)}$ is determined by $S(\Xi^{(N)})$ for any $N \geq 1$.

Suppose now that we know $s_1^{(k)}, \dots, s_T^{(k)}$ from $S(\Xi^{(N)})$ for all $N \geq T$ and $k$ for which $\alpha_k = \alpha_m$, and consider the sequences
\begin{equation}
\eta_{j}^{(k,T)} = j\alpha_m + \sum_{n=1}^T s_n^{(k)}j^{-n}.
\end{equation}
As previously, for $N \geq T+1$, consider the set
\begin{equation}
X_{T+1}^{(k,N)} = \{(x - \eta_j^{(k,T)})j^{T+1} : j\in E_m, x \in S(\Xi^{(N)}) \cap [j \alpha_m - \delta, j\alpha_m + \delta]\}
\end{equation}
which we can rewrite as
\begin{equation}
X_{T+1}^{(k,N)} = \bigcup_{k' : \alpha_{k'} = \alpha_m} \set{(\xi^{(k',N)}_{2j-1} - \eta_j^{(k,T)})j^{T+1}, (\xi^{(k',N)}_{2j} - \eta_j^{(k,T)})j^{T+1}}_{j \in E_m}.
\end{equation}
We claim that the limit points of $X_{T+1}^{(k,N)}$ are precisely the coefficients $s_{T+1}^{(k')}$ such that $\eta_j^{(k',T)} = \eta_j^{(k,T)}$ for all $j \in \N$. This follows from the fact that
\begin{align}
\lim_{\substack{j \to \infty \\ j \in E_m}} (\xi^{(k',N)}_{2j-1} - \eta_{j}^{(k,T)})j^{T+1} &= \lim_{\substack{j \to \infty \\ j \in E_m}} (\xi^{(k',N)}_{2j} - \eta_{j}^{(k,T)})j^{T+1} \\
&=
\begin{cases}
s_{T+1}^{(k')} & \text{if } \eta_j^{(k',T)} = \eta_j^{(k,T)}, \\
\pm\infty & \text{otherwise.}
\end{cases}
\end{align}
We can also deduce the multiplicity of each $s_{T+1}^{(k)}$ in a similar fashion as before. It follows that we can construct the sequences
\begin{equation}
\eta_j^{(k,T+1)} = j\alpha_m + \sum_{n=1}^{T+1} s_n^{(k)} j^{-n}
\end{equation}
and we know the multiplicity of each such sequence. By induction, we can then deduce any coefficient $s_n^{(k)}$ from $S(\Xi^{(N)})$ as long as $N \geq n$.

\textbf{Step 4: } We now turn our attention to $m \not \in I$, and assume that
we have already proved the proposition for all $k$ such that $\alpha_k \prec
\alpha_m$. Defining this time
\begin{equation}
  \delta = \frac 1 2 \min \set{\abs{\alpha_m - n \alpha_k} : \alpha_k \not
    \preceq
  \alpha_m, n \in \N}
\end{equation}
and $E_m$ as in \eqref{eq:Em}, it follows from the same construction as in Step 2
that
\begin{equation}
  [j\alpha_m - \delta, j \alpha_m + \delta] \cap R(M) = \underbrace{\set{j
  \alpha_m,\dotsc,j\alpha_m}}_{\mu \text{ times}},
\end{equation}
where $\mu = 2\sum_{\alpha_k \preceq \alpha_m}
\mu(k)$. We observe that once again, the indices in the sequence $S(\Xi^{(N)})$ of those elements are uniquely determined by $R(M)$ for any $N \geq 1$. For every $k$ such that $\alpha_k \preceq \alpha_m$, write $r(k)$ to be the integer such that $\alpha_m = r(k) \alpha_k$. Defining $X_1^{(m,N)}$ as in step $3$, its limit points are now given by the values of $\frac{s_1^{(k)}}{r(k)}$ for which $\alpha_k \preceq \alpha_m$. From the induction hypothesis, we know those values whenever $r(k) > 1$, so we can disregard them. What is left are the values of $s_1^{(k)}$ for which $\alpha_k = \alpha_m$. Proceeding in a similar manner as in step $3$, but with
\begin{equation}
\eta_j^{(k,T+1)} = j\alpha_m + \sum_{n=1}^{T+1} s_n^{(k)}(r(k)j)^{-n}
\end{equation}
and disregarding the values we already know, we are then able to recover recursively the values of $s_n^{(k)}$ from $S(\Xi^{(N)})$ for any $n \leq N$ for each $k$ with $\alpha_k = \alpha_m$. The set $M$ is finite, hence our inductive procedure necessarily terminates, finishing the proof.
\end{proof}

Theorem \ref{thm:invariance} follows directly from Proposition
\ref{prop:decoupling}.

\begin{proof}[Proof of Theorem \ref{thm:invariance}]
Let $N \in \N$ and for $1 \leq m \leq \ell$, let $\xi^{(m,N)}$ and $\Xi^{(N)}$ be as defined in the statement of Theorem \ref{thm:mc}. Since $\sigma_j \sim S(\Xi^{(\infty)})_j$, there exists a constant $C_N$ such that for all $j \geq 0$,
\begin{equation}\label{eq:sigmaXiN}
\abs{\sigma_j - S(\Xi^{(N)})_j} \leq C_N j^{-N-1}.
\end{equation}
We define a new set of sequences $\tilde{\xi}^{(m,N)}$ by
\begin{equation}\label{eq:tildexi}
\tilde{\xi}_j^{(m,N)} = \xi_j^{(m,N)} + \sigma_{\iota(m,j)} - \xi_j^{(m,N)}
\end{equation}
where $\iota: \set{1, \dots, \ell} \times \N_0 \rightarrow \N_0$ is a bijective map such that 
\begin{equation}
S(\Xi^{(N)})_{\iota(m,j)} = \xi_j^{(m,N)}. 
\end{equation}
By \eqref{eq:sigmaXiN}, we have
\begin{equation}
\tilde{\xi}_j^{(m,N)} = \xi_j^{(m,N)} + O_N(j^{-N-1}).
\end{equation}
Therefore, the sequences $\tilde{\xi}_j^{(m,N)}$ satisfy the hypotheses of Proposition \ref{prop:decoupling} with $M= \set{\frac{2\pi}{\per(\Sigma_1)},\dotsc,\frac{2\pi}{\per(\Sigma_\ell)}}$. Letting $\tilde{\Xi}^{(N)} = \set{\tilde{\xi}^{(1,N)}, \dots, \tilde{\xi}^{(\ell, N)}}$, it follows that $S(\tilde{\Xi}^{(N)})$ determines $M$ and the coefficients $s_n^{(m)}$ for $1 \leq m \leq \ell$ and $n \leq N$. By taking $N$ arbitrarily large, we can recover any coefficient $s_n^{(m)}$. On the other hand, by \eqref{eq:tildexi}, every $\tilde{\xi}_j^{(m,N)}$ is simply an eigenvalue $\sigma_j$, and since $\iota$ is a bijection,
\begin{equation}
S(\tilde{\Xi}^{(N)})_j = \sigma_j
\end{equation}
for all $j \geq 0$. It follows that the spectrum of $\DN_\lambda$ is given by $S(\tilde{\Xi}^{(N)})$ for any $N \in \N$ and therefore determines the number of connected components of the boundary and their respective perimeters (from $M$), as well as all the coefficients $s_n^{(m)}$.

  In particular, from \eqref{eq:particularparam} we have
  \begin{equation}
    s_{-1}^{(m)}(\lambda;\Omega) = - \frac{\lambda \per(\Sigma_m)}{4\pi},
  \end{equation}
  allowing us to recover $\lambda$, and
  \begin{equation}
    s_{-2}^{(m)}(\lambda;\Omega) = \frac{\lambda \per(\Sigma_m)}{8\pi}
    \int_{\Sigma_m} k_g \de s,
  \end{equation}
  allowing us to recover the total geodesic curvature on each boundary component.
\end{proof}

We can now prove Theorem \ref{cor:genusetc} as well.
\begin{proof}[Proof of Theorem \ref{cor:genusetc}]
Since the total geodesic curvature on each boundary component is a spectral invariant, the total integral
  \begin{equation}
    \int_{\Sigma} k_g \de s = \sum_{m=1}^{\ell} \int_{\Sigma_m} k_g \de s
  \end{equation}
is a spectral invariant.
Applying the Gauss-Bonnet theorem, we get
\begin{equation}
\int_{\Sigma} k_g \de s = 2\pi(2 - 2\gamma - \ell) - \int_{\Omega} K_g \de A_g
\end{equation}
where $\gamma$ is the genus of $\Omega$. Since the number of boundary components $\ell$ is a spectral invariant, we can deduce that the quantity
\begin{equation}
4\pi \gamma + \int_{\Omega}K_g \de A_g
\end{equation}
is also a spectral invariant of the constant potential parametric Steklov problem. 
\end{proof}
\begin{rem}
It is
impossible to completely decouple the genus and the average of the Gaussian
curvature as spectral invariants from the eigenvalue
asymptotic expansion since the addition of a handle far from the boundary
changes the genus of $\Omega$ but leaves the symbol of the Dirichlet-to-Neumann
operator unchanged. However, a priori information on $\Omega$, such as being a
domain of a specific space form of constant Gaussian curvature can yield additional information, as in
Corollaries \ref{cor:sphere} and \ref{cor:flat}.
\end{rem}

\bibliographystyle{plain}

\nocite{*}

\bibliography{reference}

\end{document}